\newtheorem{thm}{Theorem}[section]
\newtheorem{lemma}[thm]{Lemma} \newtheorem{cor}[thm]{Corollary}
\newtheorem{prop}[thm]{Proposition}
\theoremstyle{definition}
\newtheorem{defn}[thm]{Definition}
\newtheorem{conj}[thm]{Conjecture}
\newtheorem{remark}[thm]{Remark}
\newcommand{\A}{\mathbb{A}}
\newcommand{\N}{\mathbb N}
\newcommand{\R}{\mathbb R}
\newcommand{\C}{\mathbb C}
\newcommand{\Q}{\mathbb Q}
\newcommand{\D}{\mathbb D}
\newcommand{\PP}{\mathbb{P}}
\newcommand{\ovl}{\overline}
\DeclareMathOperator{\charac}{char}
\DeclareMathOperator{\Spec}{\text{Spec}}
\DeclareMathOperator{\PGL}{PGL}
\DeclareMathOperator{\Aut}{Aut}
\newcommand{\abs}[1]{\left|#1\right|}
\newcommand{\mc}{\mathcal}
\newcommand{\rev}[1]{#1}
\DeclareMathOperator{\Top}{top}
\DeclareMathOperator{\Ind}{Ind}
\DeclareMathOperator{\Exc}{Exc}
\DeclareMathOperator{\adj}{adj}
\DeclareMathOperator{\Div}{Div}
\DeclareMathOperator{\Num}{Num}
\DeclareMathOperator{\rad}{rad}
\DeclareMathOperator{\Glu}{Glutsyuk}
\DeclareMathOperator{\gen}{gen}
\DeclareMathOperator{\str}{str}
\newcommand{\bk}{{\mathbf{k}}}
\tikzset{>=Straight Barb, commutative diagrams/arrow style=tikz}
\DeclareMathOperator{\Mid}{mid}
\email{maxhweinreich@gmail.com}
\address{Department of Mathematics, Harvard University, Cambridge, MA 02138.
 ORCID: 0000-0002-0103-2245}
\keywords{Billiards, algebraic dynamics, dynamical degree, periodic orbits}
\subjclass[2020]{Primary: 37C83; Secondary: 37P05, 37F80}
\begin{document}
 
\title{Algebraic Billiards in the Fermat Hyperbola}
\author{Max Weinreich}

\begin{abstract}
We prove two results on the algebraic dynamics of billiards in generic algebraic curves of degree $d \geq 2$. First, the dynamical degree grows quadratically in $d$; second, the set of complex periodic points has measure 0, implying the Ivrii Conjecture for the classical billiard map in generic algebraic domains. To prove these results, we specialize to a new billiard table, the Fermat hyperbola, on which the indeterminacy points satisfy an exceptionality property. Over $\C$, we construct an algebraically stable model for this billiard via an iterated blowup. Over more general fields, we prove essential stability, i.e. algebraic stability for a particular big and nef divisor.
\end{abstract}

\maketitle

\section{Introduction} \label{sect_intro}

The classical billiard map is a discrete-time dynamical system that models a point particle bouncing around inside a  \emph{billiard table}, a plane region $\Omega \subset \R^2$ with smooth boundary \cite{MR2168892}. The domain of the billiard map is a subset of the real surface $\partial \Omega \times S^1$, the set of unit-length vectors based on $\partial \Omega$. The first factor tracks the position of the ball upon first collision with the boundary $\partial \Omega$, and the second factor tracks the ball's direction of movement. After each collision, the direction of the ball is reflected across the tangent line to $\Omega$ at the collision point.

A major goal of billiards research is to show that a generic billiard is chaotic, where the precise meaning of chaos depends on the problem of interest \cite{MR3388585}. In this article, we prove two theorems of this type for algebraic billiards.

Algebraic billiards is an extension of real billiards to algebraically closed fields $\bk$ \cite{MR3236494, billiardsI}. The role of the billiard table is played by a given curve $C \subset \PP^2_\bk$ of degree $d \geq 2$, here assumed smooth for simplicity. The role of the space of directions $S^1$ is played by the compactified unit tangent space $D$ of a chosen nondegenerate quadratic form $\Theta$. Thus $D$ is a plane conic, abstractly isomorphic to $\PP^1$. The domain $C \times D$ is an algebraic surface, and is a compactification of the variety of unit tangent vectors for $\Theta$ based on $C \cap \A^2_\bk$.

The billiards correspondence $b = b_{C,D}$ is a rational correspondence, i.e. a multivalued rational map, denoted
$$ b_{C,D} : C \times D \vdash C \times D.$$
\rev{
We define it in two steps, modeling the operations of moving the ball and reflecting the direction. Consider a general input $(x, v) \in C \times D$. If $x$ is in $\A^2_{\bk}$, the \emph{line $\ell(x,v)$ through $x$ of direction $v$} is the line in $\PP^2_\bk$ through $x$ with slope determined by $v$. If $v$ meets $C$ transversely at $x$, the set
$$s_{C,D} (x, v) \colonequals \{(x',v): x' \in C \cap \ell(x,v) \smallsetminus x \}$$
has $d - 1$ elements, counted with multiplicity. The operation $(x,v) \mapsto s_{C,D}(x,v)$ is therefore a multivalued map defined generically on $C \times D$. Taking the Zariski closure of its graph, we obtain a rational correspondence on $C \times D$, the \emph{secant correspondence} 
$$s_{C, D}: C \times D \vdash C \times D.$$
The \emph{reflection map} $r = r_{C,D}$ is a birational involution
$$r_{C,D} : C \times D \dashrightarrow C \times D$$
of the form $r_{C,D}(x,v) = (x, r_x(v))$, i.e. a rational family of involutions $r_x: D \to D$ over $C$. The map $r_x$ is reflection across the tangent line to $C$ at $x$ with respect to the chosen quadratic form $\Theta$; see Section \ref{sect_setup}. 
 
The billiards correspondence is the composite of secant and reflection,
$$b_{C,D} \colonequals r_{C,D} \circ s_{C,D}.$$
It is a rational $(d-1)$-to-$(d-1)$ correspondence, meaning that a general point of $C \times D$ has $d-1$ images and $d-1$ preimages. See Figure \ref{fig_ab}.
}
\begin{figure}
    \centering
    \includegraphics[height=2in]{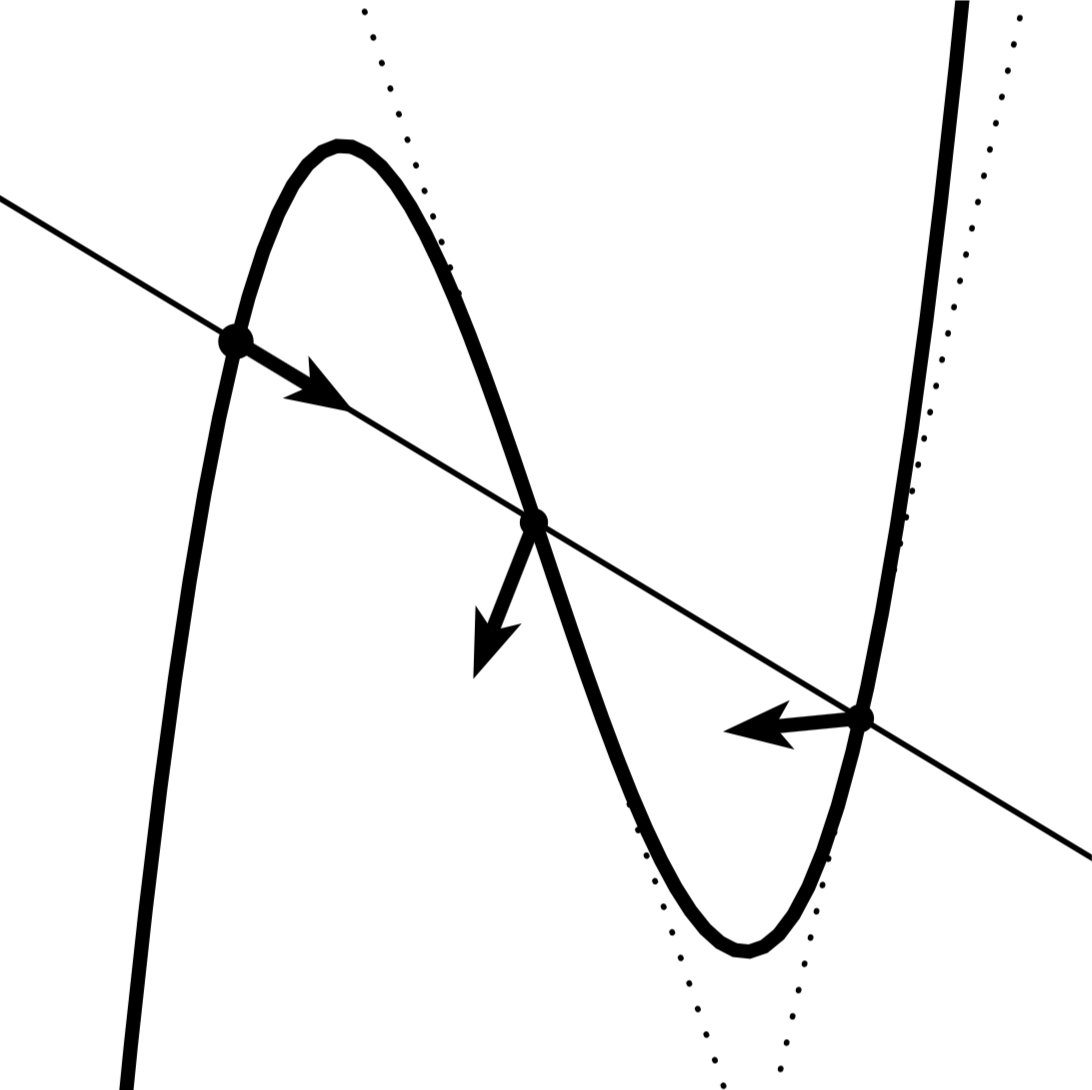}
    \caption{Algebraic billiards in the curve $C$ defined by $x_1 = x_0^3$ \rev{relative to the quadratic form} $\Theta = q_0^2 + q_1^2$. A generic input (represented by the leftmost vector) has two images (represented by the middle and rightmost vectors).}
    \label{fig_ab}
\end{figure}

In the algebraic setting, one often may prove properties of the generic object in a family by a judicious choice of specialization. However, describing the dynamics of any particular algebraic billiard is a difficult problem due to the multivaluedness of the correspondence. In this article, we conduct a detailed study of the dynamics of the \emph{Fermat hyperbola billiard}, defined by the pair
$$C: \; (x_0 - i x_1)^d + (x_0 + i x_1)^d = 1,$$
$$D: \; q_0^2 + q_1^2 = 1.$$
We are ultimately less interested in the Fermat hyperbola itself than in the information about generic billiards that we can derive from it. We give two applications of our analysis: dynamical degrees and periodic orbits.

\subsection{Dynamical degrees}

Topological and metric entropy of classical billiards is a well-studied and delicate subject \cite{MR3754521, MR896765, MR4213303, cinc2022, MR872698, MR2609011}. The dynamical degree is an algebraic analogue of entropy that has close connections to topological entropy, Arnold entropy, and arithmetic entropy. \rev{Dynamical degrees were introduced for complex correspondences by Dinh-Sibony \cite{MR2391122}. In this paper, we use a version due to Truong that measures divisor growth for correspondences over arbitrary algebraically closed fields \cite[Theorem 1.1 (1)]{MR4048444}.} In billiards, the dynamical degree controls degree growth in classical sequences of curves associated to billiard systems, such as wavefronts and caustics of reflection \cite{billiardsI}.

\begin{defn}[\cite{MR4048444}]
    Let $X$ be a smooth projective surface equipped with a dominant rational correspondence $f: X \vdash X$. Each iterate $f^n$ induces a pushforward homomorphism $(f^n)_* : \Num X \to \Num X$ on the group of divisor classes on $X$ modulo numerical equivalence \rev{(see Section  \ref{sect_act_div} for details).} Choosing any ample class $\Delta$ in $\Num X$, the (first) \emph{dynamical degree} of $f$ is the limit
$$\lambda_1(f) \colonequals \lim_{n \to \infty} ((f^n)_* \Delta \cdot \Delta)^{1/n}.$$
This limit exists, is independent of $\Delta$, and is a birational conjugacy invariant.
\end{defn}
More generally, a dominant rational correspondence $f$ of a smooth projective variety of dimension $N$ has a sequence of dynamical degrees $\lambda_i(f)$, where $i = 0, \ldots, N$.
The dynamical degrees $\lambda_0(f)$ and $\lambda_N(f)$ are given by the bidegree of the correspondence, so in the setting of billiards, we have $\lambda_0(b) = \lambda_2(b) = d - 1$, leaving only $\lambda_1(b)$ to compute.

In this article, we calculate the dynamical degree of the billiard in the Fermat hyperbola. This gives a lower bound on the dynamical degree of the billiards correspondence in the generic algebraic curve of degree $d$.

\begin{thm} \label{thm_main_dd}
    Fix $d \geq 2$, and let $\bk$ be an algebraically closed field such that \rev{$\charac \bk = 0$ or} $\gcd(\charac \bk, 2d) = 1$. Let $C_{\gen}$ be the generic plane curve of degree $d$, and let $D$ be the unit tangent space of a nondegenerate quadratic form $\Theta$.
    The dynamical degree of the billiards correspondence $b_{\gen} : C_{\gen} \times D \vdash C_{\gen} \times D$ satisfies
    $$ \lambda_1(b_{\gen}) \geq \frac{2d^2 - 3d + \sqrt{(2d^2 - 3d)^2 - 4(d-1)}}{2} \geq 2d^2 - 3d - 1.$$
\end{thm}

To contextualize our work, we recall the Birkhoff Conjecture, a long-standing open problem in billiards. Chaos in a dynamical system is measured by topological entropy $h_{\Top} \geq 0$. In classical billiards, the Birkhoff Conjecture predicts that the ellipse is the unique strictly convex, smooth curve for which the billiard has $h_{\Top} = 0$; see \cite{MR3388585}. Yet it is difficult to show that $h_{\Top} > 0$ for any particular non-elliptical table. On the other hand, it was recently shown that $h_{\Top} > 0$ for a $\mc{C}^2$-generic set of convex billiards \cite{MR4710877}. Theorem \ref{thm_main_dd} is an algebraic analogue of that result. A different analogue, the Polynomial Birkhoff Conjecture for complex billiards, concerning the existence of polynomial first integrals, was recently proved by Glutsyuk \cite{MR4210728}.

Calculating the exact value of $\lambda_1(b_{\gen})$ is a difficult problem due to the existence of destabilizing orbits, a frequent issue in algebraic dynamics. In \cite{billiardsI}, we identify and resolve some destabilizing orbits of $b$ for sufficiently general curves $C$, providing an upper bound on $\lambda_1(b)$ for those curves. There is no reason to expect other destabilizing orbits, suggesting the following conjecture.
\begin{conj}[\cite{billiardsI}] \label{conj_main}
    The dynamical degree of the generic billiard is $\lambda_1(b_{\gen}) = \rho_d$, where  $\rho_d$ is the largest absolute value of the roots of the polynomial
$$\lambda^3 - (2 d^2 - d - 3) \lambda^2 + (2d^2 - 4d + 3) \lambda - (d - 1).$$
\end{conj}
The main result of \cite{billiardsI} shows that $\lambda_1(b_{\gen}) \leq \rho_d < 2d^2 - d - 3$, so the dynamical degree is quadratic in $d$.
Ruling out further destabilizing orbits seems to be a difficult problem due to exponential growth in correspondence orbits over time.

\subsection{Periodic orbits}

The Ivrii Conjecture is among the central open problems in the theory of billiards \cite{MR3388585}. It is motivated by the long-standing Weyl conjecture on the second-order asymptotics of the spectrum of the Laplacian in a bounded planar domain $\Omega \subset \R^2$ with Dirichlet boundary conditions and smooth boundary, proposed in \cite{Weyl1911}. Weyl conjectured that the number $N(L)$ of Laplacian  eigenvalues of absolute value at most $L$ is
$$N(L) = \frac{\textrm{Area } \Omega}{4\pi } L - \frac{\textrm{Length }{\partial \Omega}}{4 \pi} \sqrt{L} + o(\sqrt{L}).$$
See \cite[Section 3.3]{MR4655924} for an introduction to the topic. Ivrii proved that if the set of periodic points for the billiard in $\Omega$ has measure $0$, then the Weyl conjecture holds for $\Omega$ \cite{MR575202}. This innocuous-sounding property concerning periodic orbits has turned out to be surprisingly resistant to proof, and thus one conjecture has produced another.

\begin{conj}[Ivrii Conjecture] \label{conj_i}
    The set of periodic points of a billiard in a plane region in $\R^2$ with smooth boundary has measure $0$.
\end{conj}

 The main settled cases of the conjecture are as follows:
\begin{enumerate}
    \item strictly convex domains $\Omega$ with globally regular analytic boundary \cite{MR775930},
    \item concave piecewise-analytic domains \cite{MR829600},
    \item generic domains $\Omega$ with smooth boundary, where ``generic'' means that the claim holds on a residual subset in the Whitney $\mc{C}^\infty$-topology \cite{MR939062}.
\end{enumerate}
The sets of $3$-periodic and $4$-periodic points are known to have measure $0$ \cite{MR1001275, MR2988811}. Period $5$ remains open, an indication of the difficulty of the problem; see \cite{MR2988811} for some progress in this direction and a detailed survey.

In the setting of complex billiards on irreducible tables, the Ivrii conjecture becomes more rigid, because algebraic conditions that hold on sets of positive measure hold everywhere. An algebraic billiard is \emph{$n$-reflective} if every point in the domain can be continued to an $n$-periodic orbit along some branch of the correspondence. Glutsyuk proposed the problem of classifying $n$-reflective tables as a complexification of the Ivrii conjecture, and solved several cases: $3$-periodic and $4$-periodic orbits, and odd-periodic orbits on curves with no isotropic points at infinity \cite{MR3224419,MR3236494}.

Our second main result proves the Ivrii conjecture for the generic complex billiard of degree $d \geq 2$, for all periods $n$, by specialization to the Fermat hyperbola. Consequently, Conjecture \ref{conj_i} is true for very general algebraic tables over $\R$, including non-convex tables.

\begin{thm} \label{thm_main_ivrii}
\leavevmode
\begin{enumerate}
        \item
    Let $\bk = \C$ and $d \geq 2$. For all $n \in \N$, the complex billiard 
    $$b_{\gen} : C_{\gen} \times D \vdash C_{\gen} \times D$$ in the generic algebraic curve of degree $d$ is not $n$-reflective. \label{it_ivrii_cplx}
    \item
    Let $T \subset \R^2$ be a real algebraic plane curve defined by the vanishing of a polynomial $F(x,y)$ of degree $d \geq 2$ with algebraically independent coefficients over $\Q$. Let $\Omega$ be a bounded component of $\R^2 \smallsetminus T$. Then the set of periodic points of the classical billiard map inside $\Omega$ has measure $0$. \label{it_ivrii}
\end{enumerate}
\end{thm}

It follows that the Weyl conjecture holds for these real domains. 

\subsection{Sketch of the proof}
In proving properties of the generic billiard, we have freedom to choose a good specialization.
The Fermat hyperbola billiard has not been studied before, yet it is a natural specialization to consider due to the structure of its indeterminacy points\rev{, described in Section \ref{sect_setup} and Section \ref{sect_fh}.}

Recall the definition $b = r \circ s$. \rev{The sets of indeterminacy points $\Ind s, \Ind r$ are related to basic geometric properties of $C$ and $D$; see Lemma \ref{lem_s_basic} and Lemma \ref{lem_r_basic}.} For general billiards, one can show that
$$\Ind b = \Ind s \cup \Ind r.$$
Further, for general billiards, the correspondences $s$ and $r$ each fix the indeterminacy points of the other:
$$p \in \Ind s \implies r(p) = p,$$
$$p \in \Ind r \implies s(p) \ni p.$$ 
The orbits ending in $\Ind s$ are easy to describe, but the orbits ending in $p \in \Ind r$ are hard to control because $s^{-1}(p)$ in general contains points besides $p$. The key property of the Fermat hyperbola is that, due to the structure of its \rev{tangent lines of maximal order,} all points in $\Ind r_{C,D}$ are $s_{C,D}$-exceptional (i.e. $s_{C,D}^{-1}(p) = \{p\}$); see Lemma \ref{lem_e}. This vastly simplifies the indeterminate orbits. The indeterminacy structure of the Fermat hyperbola billiard allows us to construct good birational models for the dynamics, as we now explain.

\begin{figure}[h]
    \centering
\includegraphics[height=1in]{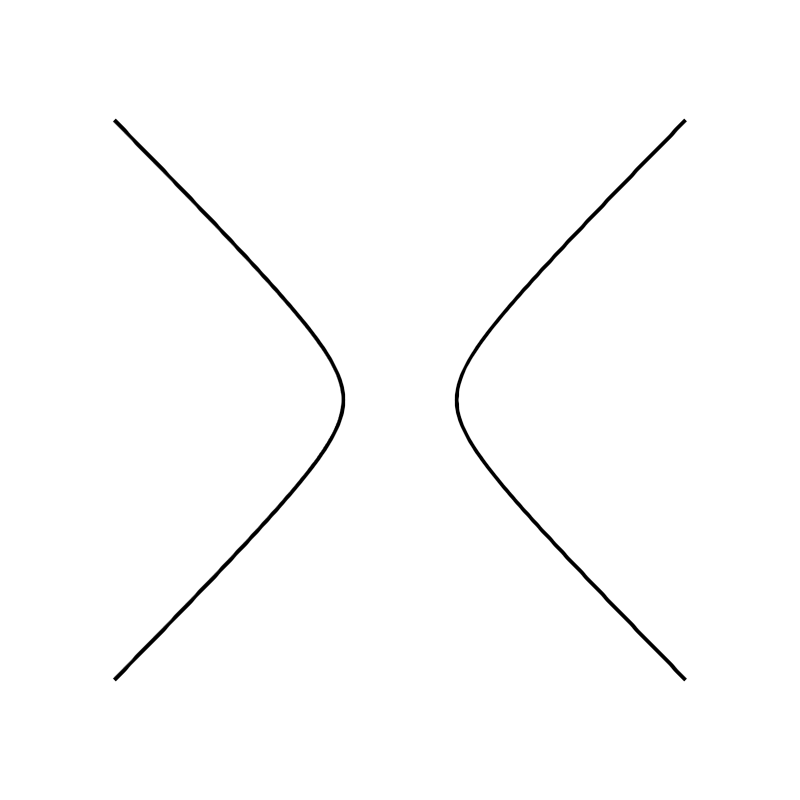}
\hspace{0.5in}
\includegraphics[height=1in]{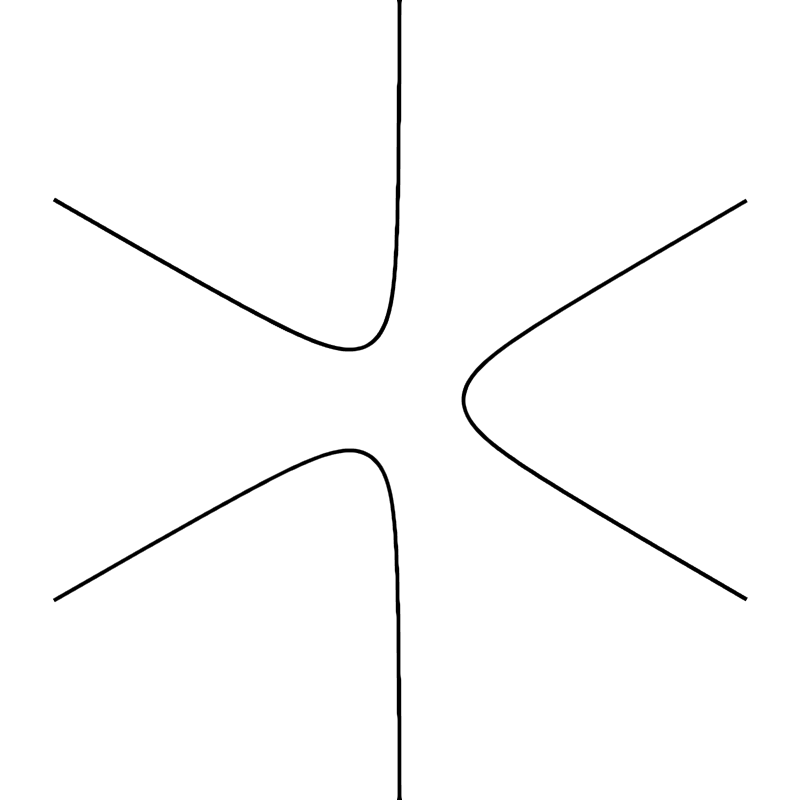}
\hspace{0.5in}
\includegraphics[height=1in]{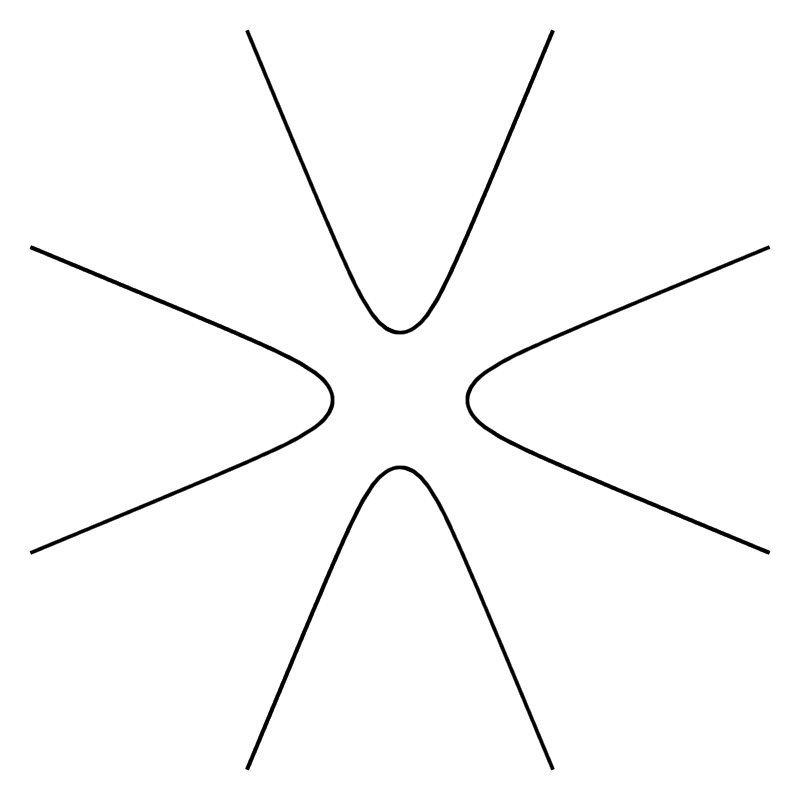}
\hspace{0.5in}
\includegraphics[height=1in]{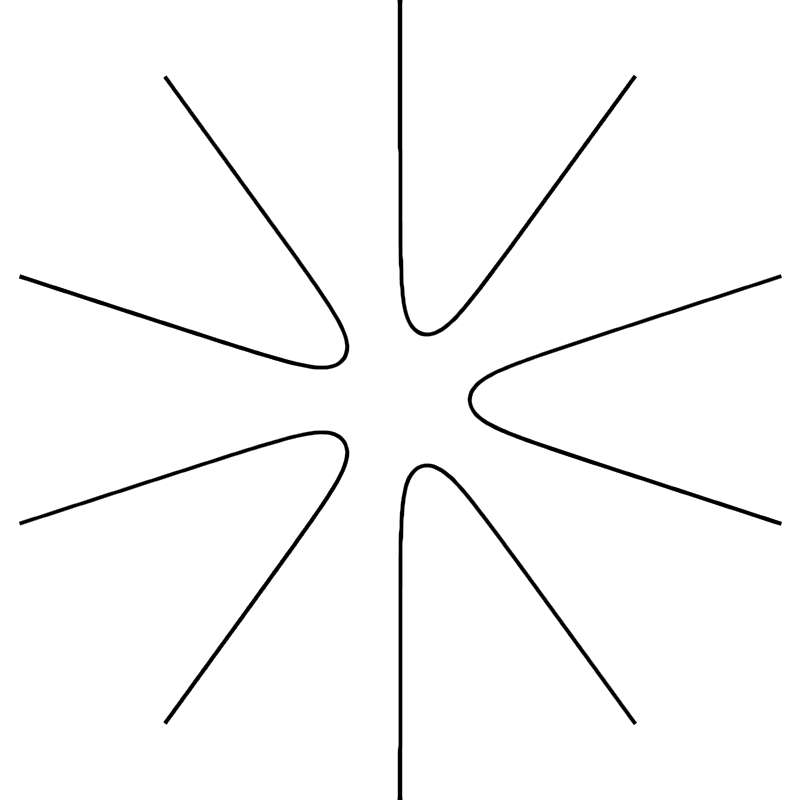}
    \caption{The real locus of the Fermat hyperbola of degree $d=2,3,4,5$.}
    \label{fig_fh}
\end{figure}

A correspondence 
$f \colon X \vdash X$ is \emph{algebraically stable} if, for all $n \in \N$, the pushforward homomorphism $(f^n)_* : \Num X \to \Num X$ satisfies
$(f^n)_* = (f_*)^n$. In the presence of algebraic stability, calculating the dynamical degree is a matter of intersection theory and linear algebra. 

However, the Fermat hyperbola billiard is not algebraically stable. One instead seeks a birational model of $C \times D$ on which the conjugate of $b_{C, D}$ is algebraically stable. We achieve this over $\C$. Over other fields, we find a model with the following weaker yet sufficient property, also noticed in \cite{MR2892921}.

\begin{defn} \label{def_es}
    A surface correspondence $f: X \vdash X$ is \emph{essentially stable} if there exists a big and nef divisor $\Delta$ on $X$ such that, for all $n$,
\[(f^n)_*\Delta \cdot \Delta = (f_*)^n \Delta \cdot \Delta.\]
\end{defn}

Our results on birational models are collected in the following theorem.
\begin{thm} \label{thm_main_model}
     Fix $d \geq 2$, and let $\bk$ be an algebraically closed field such that \rev{$\charac \bk = 0$ or} $\gcd(\charac \bk, 2d) = 1$. \begin{enumerate}
         \item The billiards correspondence $b_{C,D}$ in the Fermat hyperbola of degree $d$ admits an essentially stable model. \label{it_thm_es}
         \item If $\bk = \C$, then $b_{C,D}$ admits an algebraically stable model.  \label{it_thm_as}
         \item If $d = 2$, then $b_{C,D}$ is a completely integrable birational map, conjugate to a translation map on an elliptic surface.  \label{it_thm_d_2}
         \item If $d = 3$, then $b_{C,D}$ admits a regular model. \label{it_thm_d_3}
     \end{enumerate} 
\end{thm}

Item (1) allows us to compute $\lambda_1$, proving Theorem \ref{thm_main_dd}. We consider positive characteristic to emphasize the existence of a purely algebraic proof of Theorem \ref{thm_main_dd}. Item (2) exhibits a smooth algebraic billiard in degree $d > 2$ with an algebraically stable model; one ultimately hopes that such a construction is possible for the generic curve. 
Item (3) is mentioned for completeness; in fact, it holds for general conics, not just the Fermat hyperbola \cite[Corollary 7.11]{billiardsI}. Item (4) is a corollary of the proof of (1).

To construct an algebraically stable model of a correspondence $f$, the classical approach is to repeatedly blow up destabilizing orbits, that is, orbits that start with contracted curves and end at points in the indeterminacy locus $\Ind f$. This process has no guarantee of succeeding, since blowups may introduce new destabilizing orbits.

Instead, we use a non-standard strategy to build a good birational model $P$ for $C \times D$. First, we blow up $\Ind s$. This regularizes $s$. Second, we perform a $(d-1)$-fold iterated blowup, chosen to regularize $r$. \rev{We denote the resulting model by $P$ and let $\pi : P \to C \times D$ be the composition of blowups defining $P$; see Figure \ref{fig_P}. We then study the following correspondences $P \vdash P$:
$$ \hat{s} \colonequals \pi^{-1}\circ s\circ\pi, \quad  \hat{r} \colonequals \pi^{-1}\circ r\circ\pi, \quad \hat{b} \colonequals \pi^{-1}\circ b\circ\pi.$$
We prove that $\hat{b} : P \vdash P$ is essentially stable in Proposition \ref{prop_essentially}. This step involves delicate work in coordinates, since $\hat{s}$ and $\hat{b}$ have indeterminacy points if $d > 3$. Over $\C$, if $d$ is odd, then $\hat{b}$ is algebraically stable; if $d$ is even, then a further modification $\hat{b}_+ : P_+ \vdash P_+$ is algebraically stable.} To prove this, we study the complex dynamics of the correspondence on a $1$-dimensional invariant subset called the midpoint divisor.

The proof of Theorem \ref{thm_main_ivrii} is a straightforward consequence of the same analysis. We show that the midpoint divisor contains \rev{a non-periodic orbit.} Then we apply the standard technique in algebraic dynamics of passing to the generic curve, then to curves defined by algebraically independent coefficients.

\subsection{Related questions}

Glutsyuk studied a complex billiards correspondence $b_{\Glu}$ in which the space of directions is the projectivized tangent space ($\simeq \PP^1)$ rather than the unit tangent space $D$ \cite{MR3224419}. These systems are semiconjugate, but $b_{C,D}$ has the additional feature that it preserves a rational $2$-form \cite{billiardsI}. By semiconjugacy, Theorem \ref{thm_main_dd}, Theorem \ref{thm_main_ivrii}(\ref{it_ivrii_cplx}), and Theorem \ref{thm_main_model} also hold for $b_{\Glu}$.

On the algebraic dynamics side, our work follows a substantial literature dedicated to dynamical degrees and the closely related problem of constructing regular or algebraically stable models for rational maps of surfaces \cite{MR1867314, MR2753603, birkett2022stabilisation}. Dynamical degrees are very difficult to compute, and much more is known about rational maps than about correspondences. Most importantly, there are correspondences for which the dynamical degree sequence fails to be log concave \cite{MR4048444}. As a result, the proofs of computability and semicontinuity of dynamical degrees of maps \cite{xie2024recursive} do not apply in our setting, and these properties are not known for correspondences. Neither are there many well-understood examples. The only complete computations of $\lambda_1(f)$ for correspondences (beyond maps) are for monomial correspondences and Hurwitz correspondences \cite{MR4266360, MR4108910}.

Birational surface maps admit algebraically stable models, and their dynamical degrees are quadratic integers \cite{MR1867314}. On the other hand, there are rational maps of $\PP^2$ with no algebraically stable model \cite{MR2021001, MR4205407}. No general results on constructing improved models for surface correspondences are yet known.

\subsection*{Outline}
Section \ref{sect_prelim} contains background on correspondences and dynamical degrees.

In Section \ref{sect_model}, we recall and generalize the definition of algebraic billiards, introduce the Fermat hyperbola billiard, and construct an improved model $P$ of the domain. 

In Section \ref{sect_r}, we check that $P$ is a regular model for $r$. 

In Section \ref{sect_s}, we compute the part of the action of $s$ on $P$ that allows us to later show essential stability.

Section \ref{sect_dd}: We show that \rev{the lift $\hat{b}$ of the billiard} is essentially stable on $P$, Theorem \ref{thm_main_model} (\ref{it_thm_es}), and prove Theorem \ref{thm_main_dd} ($=$ Theorem \ref{thm_dd_body}).

Section \ref{sect_AS}: We compute indeterminacy of \rev{the lift $\hat{b}$} of \rev{the billiard} to $P$ and construct algebraically stable models when $\bk = \C$, proving Theorem \ref{thm_main_model} (\ref{it_thm_as}). Then we prove Theorem \ref{thm_main_ivrii}.

\rev{Note that Section \ref{sect_dd} and Section \ref{sect_AS} are independent, but both rely crucially on the computations in Section \ref{sect_r} and Section \ref{sect_s}.}

\subsection*{Acknowledgments}

We thank Richard Birkett, Laura DeMarco, Jeff Diller, and Alexey Glutsyuk for helpful conversations related to this work. Thanks to Anna Dietrich for invaluable assistance preparing the manuscript. The diagrams were prepared with Desmos and GeoGebra. This material is based upon work supported by the National Science Foundation under Award No. 2202752.

\section{Preliminaries} \label{sect_prelim}
This section contains background on correspondences over an algebraically closed field $\bk$. The facts are mostly well-known, but we have to introduce some nonstandard terminology (i.e. contracted curve vs. contraction). Background on divisors and intersection theory may be found in \cite{MR1644323, MR3617981}.

All our surfaces are smooth, projective, and irreducible.

\subsection{Correspondences} \label{sect_corrs}

\begin{defn} \label{def_corr}
Given irreducible projective varieties $X$ and $Y$ of the same dimension, a \emph{rational correspondence} $f = (\Gamma_f, X, Y)$, also written $f: X \vdash Y$, is an effective algebraic cycle in $X \times Y$ of the form
$$\Gamma_f = \sum_{i=1}^\nu m_i [\Gamma_i] \neq 0,$$
such that each summand satisfies $\dim \Gamma_i = \dim X$. We have projection maps 
$$\Pi_1 : X \times Y \to X, \quad \Pi_2 : X \times Y \to Y.$$
Let 
$$\pi_1: \Gamma_f \to X, \quad \pi_2 : \Gamma_f \to Y$$
be the restrictions of $\Pi_1, \Pi_2$ to the support of $\Gamma_f$. 
We write $f = (\Gamma_f, \pi_1, \pi_2)$ when it is helpful to remember the projections.

A rational correspondence is \emph{dominant} if the projections $\pi_1, \pi_2$ restrict to dominant, generically finite morphisms on each component $\Gamma_i$. For the rest of the paper, all correspondences are dominant.

Writing $m = \deg \pi_2$ and $n = \deg \pi_1$, we say that $f$ is an $m$-to-$n$ rational correspondence. We write
$$\deg_1 f \colonequals \deg \pi_1, \quad \deg_2 f \colonequals \deg \pi_2.$$

The \emph{adjoint} of $f : X \vdash Y$ is the correspondence $g : Y \vdash X$, defined by taking the image of $\Gamma_f$ in $Y \times X$ under $(x,y) \mapsto (y,x)$. \rev{We say $f$ is \emph{self-adjoint} if it is equal to its adjoint.}

The \emph{indeterminacy locus} and \emph{exceptional locus} are the varieties 
$$\Ind f = \{ x \in X : \dim \pi_1^{-1}(x) > 0\}, $$
$$\Exc f = \{ y \in Y : \dim \pi_2^{-1}(y) > 0 \}.$$

The rational correspondence $f$ is \emph{regular} if $\Ind f = \varnothing$. Then the projection $\pi_1$ is finite, rather than just generically finite. If further $\Exc f = \varnothing$, then $f$ is \emph{biregular}. 
\end{defn}

We are interested only in curve and surface correspondences. Dominant curve correspondences are biregular.

\begin{defn}
Let $f : X \vdash Y$ be a dominant correspondence of smooth, irreducible projective surfaces.

A \emph{contraction} of $f$ is an irreducible curve $V \subset \Gamma_f$ such that $\pi_2(V)$ is a point. A \emph{contracted curve} is an irreducible curve $W \subset X$ such that $\pi_1^{-1}(W)$ contains a contraction.

An \emph{expansion} of $f$ is an irreducible curve $V \subset \Gamma_f$ such that $\pi_1(V)$ is a point. An \emph{exceptional image curve} is a irreducible curve $W \subset Y$ such that $\pi_2^{-1}(W)$ contains an expansion.
\end{defn}

It is often useful to view correspondences as set-valued maps, as follows.
\begin{defn}
    The \emph{(total) image} of a set $\Sigma \subset X$ by $f$ is
$$f(\Sigma) \colonequals \pi_2 (\pi_1^{-1}(\Sigma)).$$
It is also useful to define a notion of image that avoids indeterminacy. The \emph{strict image} of a set $\Sigma\subset X$ by $f$, denoted $f_{\str}(\Sigma)$, is the Zariski closure in $Y$ of $f(\Sigma\smallsetminus \Ind f)$.
\end{defn}
Note that the strict image of a contracted curve may contain curves.

\begin{defn}
Suppose that $W \subset X$, $W' \subset Y$ are irreducible curves and $W'$ is a component of $f_{\str}(W)$. The \emph{dominant restriction of $f$}, denoted $f|_{W \vdash W'}$, is the dominant curve correspondence defined by the multiset of components of $\Gamma_f|_{W \times W'}$ that are dominant for both projections.
\end{defn}

The following lemma helps us \rev{to} compute images.
\begin{lemma} \label{lem_stein}
    Let $f : X \vdash Y$ be an $m$-to-$n$ rational correspondence of smooth projective surfaces, and let $p \in X$. If $f(p)$ contains a set $\Sigma$ of isolated points of total multiplicity $n$, then $f(p) = \Sigma$.
\end{lemma}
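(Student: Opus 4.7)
The plan is a conservation-of-number argument: the hypothesized multiplicities of isolated points of $f(p)$ already saturate the generic degree $n$ of the first projection $\pi_1 : \Gamma_f \to X$, leaving no room for additional preimages of $p$.

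For the setup, I would observe that $\pi_1$ is proper (since $Y$ is projective and $\Gamma_f \subset X \times Y$ is closed) and dominant of degree $n$. For each isolated preimage $r$ of $p$ in $\Gamma_f$, Zariski's Main Theorem (in the form that a quasi-finite separated morphism factors as an open immersion into a finite one) provides a Zariski (or, over $\C$, Euclidean) open neighborhood $B_r$ of $r$ in $\Gamma_f$ on which $\pi_1|_{B_r}$ is finite of some degree $\mu_r \geq 1$. The natural multiplicity of an isolated point $q \in f(p)$ is then $\mathrm{mult}(q) = \sum_{r : \pi_2(r) = q} \mu_r$, summed over isolated preimages $r$ of $p$ lying over $q$.

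The core step is a conservation bound: $\sum_r \mu_r \leq n$, with equality if and only if $\pi_1^{-1}(p)$ is zero-dimensional. Supposing for contradiction that there is a positive-dimensional component $V \subset \pi_1^{-1}(p)$, I would note that $V$ is contained in some irreducible component $\Gamma_i$ of $\Gamma_f$ that dominates $X$ by the definition of a correspondence. Choose an open neighborhood $B_V \supset V$ disjoint from all the $B_r$'s. By properness of $\pi_1$, the image $\pi_1(\Gamma_f \setminus (B_V \cup \bigsqcup_r B_r))$ is closed in $X$ and does not contain $p$, so its complement is an open neighborhood $U \ni p$ with $\pi_1^{-1}(U) \subseteq B_V \sqcup \bigsqcup_r B_r$. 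For generic $p' \in U$, the fiber $\pi_1^{-1}(p')$ has $n$ points counted with multiplicity: exactly $\mu_r$ points in each $B_r$ coming from the finite-degree structure, plus the points in $B_V$. But $B_V \cap \Gamma_i$ is a nonempty open subset of the irreducible variety $\Gamma_i$, so its image under the dominant map $\pi_1|_{\Gamma_i}$ is dense in $X$; hence $\pi_1^{-1}(p') \cap B_V$ is nonempty for generic $p'$, contributing at least one further preimage, and forcing $\sum_r \mu_r < n$ strictly.

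The lemma now follows: the hypothesis $\sum_{q \in \Sigma} \mathrm{mult}(q) = n$ is a lower bound on $\sum_r \mu_r$, so the conservation bound must be an equality, forcing $\pi_1^{-1}(p)$ to be zero-dimensional with all of its multiplicity concentrated on preimages of $\Sigma$. In particular, $f(p) = \pi_2(\pi_1^{-1}(p))$ is a finite set with no room for points outside $\Sigma$, so $f(p) = \Sigma$. The main technical obstacle is justifying the exact count of generic preimages in each $B_r$ when $\Gamma_f$ is singular at $r$: flatness of $\pi_1|_{B_r}$ can fail, but the well-defined degree $\mu_r$ of the finite morphism still equals the cardinality of its generic fiber, and the argument only uses this generic count together with properness, so the singular case poses no real obstruction.
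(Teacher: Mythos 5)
Your argument is the same conservation-of-number principle that the paper invokes (with a one-line citation to Stein factorization), and the overall structure is sound. However, one step as written is incorrect: Zariski's Main Theorem does not produce a Zariski-open $B_r \ni r$ on which $\pi_1|_{B_r}$ is \emph{finite}. Since $\pi_1$ is proper on the projective graph, its restriction to a strict open subset cannot be proper, hence cannot be finite; ZMT only gives a factorization $B_r \hookrightarrow \overline{B}_r \to X$ with the \emph{second} map finite. (The Euclidean version works over $\C$, but the lemma must hold over arbitrary algebraically closed fields.)

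The clean repair, and what the paper's citation amounts to, is to pass to the Stein factorization $\Gamma_f \xrightarrow{h} \Gamma' \xrightarrow{g} X$, with $h$ having connected fibers and $g$ finite of degree $n$ onto the smooth (hence normal) surface $X$. An isolated point $r$ of $\pi_1^{-1}(p)$ corresponds to a point of $g^{-1}(p)$ whose $h$-fiber is exactly $\{r\}$, and its multiplicity is the local degree of $g$ there; a positive-dimensional component of $\pi_1^{-1}(p)$, being connected, is collapsed by $h$ to a \emph{different} point of $g^{-1}(p)$, which contributes local degree at least $1$. Since the local degrees of the finite map $g$ over $p$ sum to at most $n$, isolated points of total multiplicity $n$ exhaust the budget, forcing $\pi_1^{-1}(p)$ to be finite and $f(p)=\Sigma$. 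This is your contradiction in intrinsic form; once $g$ is in hand, the bookkeeping with the neighborhoods $B_r$, $B_V$ and the density of $\pi_1(B_V \cap \Gamma_i)$ is no longer needed, though it is correct in spirit.
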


\begin{proof}
This is a consequence of Stein factorization applied to $\pi_1$ \cite[III.11.5]{Hartshorne}.
\end{proof}

\begin{defn} \label{def_composite}
We may form the \emph{composite}
$$g \circ f: X \vdash Z$$
of a pair of dominant rational correspondences
$$f : X \vdash Y, \quad g : Y \vdash Z.$$
It is defined by its graph, as follows.

The \emph{total composite graph} of $f$ and $g$ is the scheme-theoretic image of $\Gamma_f \times_Y \Gamma_g$ in $X \times Z$. It is a $2$-cycle. The graph $\Gamma_{g \circ f}$ is obtained by removing all components $\Gamma$ from the total composite graph on which either projection fails to be dominant.

The following construction is equivalent.
Let $V = Y \smallsetminus \Ind g$ and let $U = f^{-1}(V) \smallsetminus \Ind f$.
Let $\pi : U \times V \times Z \to U \times Z$ be the projection forgetting the $V$ factor.
Then define
$$\Gamma^{U \times Z}_{g \circ f} \colonequals \pi \left(\Gamma_f|_{U \times V} \times_V \Gamma_g|_{V \times Z} \right).$$
Then $\Gamma_{g \circ f}$ is defined as the Zariski closure of $\Gamma^{U \times Z}_{g \circ f}$ in $X \times Z$. We extend this definition linearly to $f$ and $g$ with graphs with non-reduced or multiple irreducible components.
\end{defn}

\subsection{Formal correspondences} \label{sect_formal}

Throughout this paper, it will be convenient to do local calculations in power series. To justify this simplification, we outline a theory of formal regular and rational correspondences of smooth surface germs. This is an extension of the theory of holomorphic or meromorphic fixed point germs \cite{MR2097722,MR2339287} to correspondences over arbitrary algebraically closed base fields. There are purely algebraic proofs over $\C$ that generalize without difficulty given the correct vocabulary. We therefore only give proofs for claims where there are differences between maps and correspondences.

For formal schemes, see \cite[II.9]{Hartshorne}. In this paper, all formal schemes are obtained from projective schemes by completion and restriction to closed formal subschemes. Let $X$ be a projective surface, let $x \in X$ be a smooth point of $X$, and let $G$ be the germ (formal neighborhood) of $X$ at $x$. It is isomorphic to the one-point formal scheme $(\A^2, 0)$ that has structure sheaf $\bk[[x_0, x_1]]$ at $x$, see \cite[II.9.3.4]{Hartshorne}.

\begin{defn}
    An \emph{iterated blowup} $\pi:G_\pi \to G$ is a formal scheme morphism obtained by a composition of finitely many point blowups
$$G_\pi = G_\ell \to \ldots \to G_0 = G.$$
It is naturally isomorphic to the formal neighborhood of the exceptional divisor upon blowing up the ambient surface of $G$ via $\pi$.

The \emph{exceptional curves} in $G_\pi$ are the exceptional image curves of $\pi^{-1}$. They generate a free abelian group $\Div G_\pi$. Iterated blowups form an inverse system for the relation $\trianglerighteq$ of domination, defined by
$$\pi' \trianglerighteq \pi \quad \Leftrightarrow \quad \exists \; \tilde{\pi} \colon \pi' = \pi \circ \tilde{\pi},$$
where $\tilde{\pi}$ is a composition of point blowups. If $G_\pi \neq G$, then the underlying scheme of $G_\pi$ is the union of the exceptional curves of $\pi$.
\end{defn}

\begin{defn}
    Let $G_1, G_2$ be iterated blowups of $G$. A \emph{formal rational $m$-to-$n$ correspondence} $f:G_1 \vdash G_2$ is given by its graph $\Gamma_f \subset G_1 \times G_2$, a formal effective $2$-cycle such that the two projections restricted to all sufficiently small infinitesimal neighborhoods approximating the scheme part of $G_1 \times G_2$ are generically finite of degrees \rev{$\deg_2 f = m$ and $\deg_1 f = n$} respectively.

All such correspondences in this paper are obtained by restricting correspondences on varieties as defined in Section \ref{sect_corrs}.
\end{defn}

\begin{defn}
\rev{Given a rational correspondence $G_\pi \vdash G_{\pi'}$ and} exceptional curves $W \subset G_\pi$, $W' \subset G_{\pi'}$ such that $W'$ is a component of $f_{\str}(W)$, the \emph{dominant restriction} $f|_{W \vdash W'}$ is the dominant curve correspondence defined by the multiset of components of $\Gamma_f|_{W \times W'}$ that are dominant for both projections.
\end{defn}

The following lemma is used in the proof of Lemma \ref{lem_c_test}.

\begin{lemma}\label{lem_corr_res}
    Say $f: G_\pi \vdash G_{\pi'}$. Then there exists $\pi''\trianglerighteq \pi'$ such that 
    \[f_{\pi\pi''} \colonequals (\pi'')^{-1}\circ \pi'\circ f\]
    has no contracted curves.
\end{lemma}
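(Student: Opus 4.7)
The plan is to build $\pi''$ by iteratively blowing up in $G_{\pi'}$ the images of contracted curves of $f$ together with their infinitely near points, until the composite correspondence $f_{\pi\pi''} = \tilde\pi^{-1} \circ f$ (where $\pi'' = \pi' \circ \tilde\pi$) has no contractions.

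First, $f$ has only finitely many contractions. Since $\pi_2 \colon \Gamma_f \to G_{\pi'}$ is dominant and generically finite, the set $S \subset G_{\pi'}$ of points with positive-dimensional fiber must be $0$-dimensional: if $\dim S \geq 1$, then $\pi_2^{-1}(S)$ would be all of $\Gamma_f$, contradicting generic finiteness. So $S = \{q_1, \ldots, q_m\}$ is finite, and there are only finitely many contractions $V_1, \ldots, V_N \subset \Gamma_f$, with images among the $q_i$.

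The main ingredient is a local resolution at a single contraction $V$ with image $q$. The key observation is that the divisorial valuation $v_V$ along $V$ pulls back through $\pi_2^*$ to a discrete rank-$1$ valuation $v$ on the function field $k(G_{\pi'})$. Its center on $G_{\pi'}$ is the $0$-dimensional point $q$, since $\pi_2(V) = q$. By the classical theorem that every discrete rank-$1$ valuation with $0$-dimensional center on a surface function field is divisorial (realized by a prime divisor on some iterated blowup centered above the center), there is a finite iterated blowup $\pi^{(V)} \colon G^{(V)} \to G_{\pi'}$ whose exceptional locus contains a prime component $E$ representing $v$. On $G^{(V)}$ the center of $v$ is the curve $E$, so working in local coordinates around the strict transform of $V$ inside $\Gamma_f \times_{G_{\pi'}} G^{(V)}$, the image of $V$ dominates $E$ rather than a single point. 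Hence $V$ is no longer contracted by $(\pi^{(V)})^{-1} \circ f$.

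Performing this local construction at each $V_i$ and combining the finitely many local iterated blowups into a single iterated blowup $\pi''$ of $G_{\pi'}$ (the $q_i$ are distinct, so the local blowups commute), we must confirm that $f_{\pi\pi''}$ has no contracted curves at all. A curve $W \subset G_\pi$ not contracted by $f$ has $f(W)$ containing a $1$-dimensional component, and since $\tilde\pi^{-1}$ is birational, it preserves this $1$-dimensionality; so $W$ cannot become contracted. The main obstacle is confirming that the realization of the divisorial valuation $v$ on $G^{(V)}$ actually resolves $V$ in the composite graph, which reduces to a local Puiseux-style computation in $\widehat{\mathcal{O}}_{\Gamma_f, V} \cong \kappa(V)[[s]]$: with local coordinates $(x, y)$ at $q$ and $\pi_2^* x = s^a u$, $\pi_2^* y = s^b w$ for units $u, w$, one checks in the successive blowup charts that the induced direction map from (the normalization of) $V$ into $E$ is non-constant, so $V$ lifts to a multisection of $E$ rather than a point fiber, in the spirit of \cite{MR2097722}.
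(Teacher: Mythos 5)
Your approach is genuinely different from the paper's. The paper's proof is elementary and inductive: it passes to the desingularized graph $\tilde\Gamma_f$, blows up $G_{\pi'}$ one point at a time at the image of a contraction, invokes the universal property of blowing up to argue that each blowup eliminates at least one contraction, and terminates after at most $\nu(f)$ steps. You instead attach to each contraction $V$ the valuation $v = v_V \circ \pi_2^*$ on $\bk(G_{\pi'})$, realize $v$ in one shot as an exceptional divisor on a single iterated blowup, and take the join of the local models. This makes the termination mechanism more conceptual, at the price of importing valuation theory.

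However, the ``classical theorem'' you invoke is false as stated: a discrete rank-$1$ valuation with $0$-dimensional center on a two-dimensional function field need not be divisorial. For instance, on $\bk(x,y)$, choose a power series $y_0 \in \bk[[x]]$ transcendental over $\bk(x)$ and set $v(f) = \ord_x f(x, y_0(x))$; this is discrete of rank $1$, centered at the origin on every smooth model, with residue field $\bk$, hence not divisorial (Abhyankar's inequality permits residue transcendence degree $0$). The conclusion you need is nonetheless true, but for a different reason: your $v$ is the restriction of the \emph{divisorial} valuation $v_V$ along the \emph{finite} extension $\bk(G_{\pi'}) \subset \bk(\Gamma_f)$. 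Since the extension is algebraic, the residue field of $v_V$ is algebraic over that of $v$, so the residue field of $v$ still has transcendence degree $1$ over $\bk$; together with the value group being $\Z$, this is precisely the characterization of a divisorial valuation. With that repair, and the brief observation that once the center of each $v^{(V_i)}$ is a curve it remains a curve under any further blowup (so taking the join $\pi''$ of the local models does not re-contract any $V_i$, even when several contractions share an image point), your argument is correct.
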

\begin{proof}
    There exist finitely many contractions of $\tau: \tilde{\Gamma}_f \to G_{\pi'}$, where $\tilde{\Gamma}_f$ is the desingularized graph of $f$. Call this number $\nu(f)$. Let $p\in G_{\pi'}$ be the image of a contraction of $\tau$. Let $\pi_0'\trianglerighteq \pi'$ be obtained by $\pi'_0 = \pi'\circ \pi_p$, where $\pi_p$ is the blowup at $p$. Let $E_p = \pi_p^{-1}(p)$. Then, by the universal property of blowing up, at least one contracted curve of $\tau$ is mapped onto $E_p$ by $\pi_p^{-1}\circ \tau$. Continuing in this fashion, after at most $\nu(f)$ point blowups, we obtain $\pi''\trianglerighteq \ldots \trianglerighteq \pi'_0 \trianglerighteq \pi'$ such that the map $\tilde{\Gamma}_{f_{\pi\pi''}} \to G_{\pi''}$ has no contracted curves. Hence, neither does $\Gamma_{f_{\pi\pi''}}\to G_{\pi''}$.
\end{proof}

\begin{remark}
    We cannot necessarily find $\pi''\trianglerighteq \pi$ and $\pi'''\trianglerighteq\pi'$ such that $f_{\pi''\pi'''}$ is biregular, unlike the case of rational maps. 
\end{remark}

The next lemma is a useful tool for ruling out contracted curves.

\begin{lemma}[Contracted curve test]\label{lem_c_test}
    Say $f: G_\pi \vdash G_{\pi'}$. Suppose that $\Delta$ is a prime divisor on $G_\pi$ such that 
    \[\sum_{\Delta'} \deg_1 f|_{\Delta \vdash \Delta'} = \deg_1 f,\]
    where $\Delta'$ ranges over \rev{$1$-dimensional} components of $f_{\str}(\Delta)$.
    Then $\Delta$ is not a contracted curve of $f$. 
\end{lemma}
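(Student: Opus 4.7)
The plan is to argue by contradiction via a generic fiber count. Suppose $\Delta$ is a contracted curve, so that there is an irreducible curve $V\subset \pi_1^{-1}(\Delta)\subset \Gamma_f$ dominating $\Delta$ under $\pi_1$ and with $\pi_2(V)$ a point. The goal is to deduce $\sum_{\Delta'}\deg_1 f|_{\Delta\vdash\Delta'}<\deg_1 f$.

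First I would enumerate the irreducible components $V_1,\ldots,V_\ell$ of the set-theoretic preimage $\pi_1^{-1}(\Delta)$ that dominate $\Delta$, and attach to each a positive integer weight $a_j$. Writing $\Gamma_f=\sum m_i[\Gamma_i]$ and letting $\Gamma_{i(j)}$ denote the unique component containing $V_j$, set $a_j\colonequals m_{i(j)}\cdot \mathrm{ord}_{V_j}\bigl((\pi_1|_{\Gamma_{i(j)}})^*\Delta\bigr)$. At a general point $x\in\Delta$ (chosen outside $\Ind f$ and outside the ramification locus of each $\pi_1|_{\Gamma_i}$) the $0$-cycle $\pi_1^{-1}(x)$ decomposes as
\[\pi_1^{-1}(x)\;=\;\sum_{j} a_j\,\bigl[(\pi_1|_{V_j})^{-1}(x)\bigr],\]
and taking degrees produces the identity $\deg_1 f=\sum_j a_j\,\deg(\pi_1|_{V_j})$.

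Next I would note that the graph of the dominant restriction $f|_{\Delta\vdash\Delta'}$ consists precisely of those $V_j$ with $\pi_2(V_j)=\Delta'$, carrying the same weights $a_j$. Therefore
\[\sum_{\Delta'}\deg_1 f|_{\Delta\vdash\Delta'}\;=\;\sum_{j:\ \pi_2(V_j)\text{ a curve}} a_j\,\deg(\pi_1|_{V_j}).\]
Subtracting from the fiber identity yields
\[\deg_1 f - \sum_{\Delta'}\deg_1 f|_{\Delta\vdash\Delta'}\;=\;\sum_{j:\ \pi_2(V_j)\text{ a point}} a_j\,\deg(\pi_1|_{V_j})\;\geq\;0,\]
with equality if and only if no $V_j$ dominating $\Delta$ is a contraction. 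Since the hypothesized $V$ appears among the $V_j$ on the right-hand side, the expression is strictly positive, contradicting the hypothesis.

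The main point requiring care is justifying the cycle-theoretic fiber decomposition at a generic $x\in\Delta$. This reduces component by component on $\Gamma_f$ to the projection formula for the generically finite morphism $\pi_1|_{\Gamma_i}\colon \Gamma_i\to X$: the pullback of $\Delta$ as a Weil divisor on $\Gamma_i$ decomposes as a nonnegative integer combination of the components $V_j\subset\Gamma_i$ dominating $\Delta$, and intersecting with the class of a general fiber of $\pi_1|_{\Gamma_i}$ recovers the sum of $\deg(\pi_1|_{V_j})$ weighted by those multiplicities. One subtlety worth flagging is that the literal definition permits ``contractions'' $V$ with $\pi_1(V)$ a point of $\Delta$; such $V$ lie inside a fiber of $\pi_1$ and never dominate $\Delta$, so they are irrelevant to the fiber count and are controlled separately by $\Delta\not\subset\Ind f$, which holds automatically in the formal setting of the lemma since indeterminacy loci of correspondences of smooth surface germs have codimension at least two.
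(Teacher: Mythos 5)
Your argument is correct in substance, and it takes a genuinely different route from the paper's. The paper proves the contrapositive by first invoking Lemma~\ref{lem_corr_res} to produce a further blowup $\pi''\trianglerighteq\pi'$ on which the conjugated correspondence $f_{\pi\pi''}$ has no contractions; on that model the sum of restricted degrees must equal $\deg_1 f$, and the exceptional curve over the contracted image point $p$ accounts for a strictly positive summand that disappears when one pushes back down to $G_{\pi'}$ (since $\tilde\pi$ collapses it to a point), giving the desired strict inequality. You instead argue entirely on the graph $\Gamma_f$ itself, decomposing the generic fiber of $\pi_1$ over $\Delta$ into contributions from the components $V_j$ of $\pi_1^{-1}(\Delta)$ dominating $\Delta$: those with $\pi_2(V_j)$ a curve recover $\sum_{\Delta'}\deg_1 f|_{\Delta\vdash\Delta'}$, and those with $\pi_2(V_j)$ a point make up the strictly positive deficit. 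This is more elementary --- you avoid Lemma~\ref{lem_corr_res} entirely --- at the cost of having to track multiplicities on the (possibly singular) graph.

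Two small points to tighten. First, the component $\Gamma_{i(j)}$ containing $V_j$ need not be unique: the $\Gamma_i$ are surfaces in $X\times Y$ and $V_j$ is a curve, so $V_j$ may lie in several; the weight should be $a_j = \sum_{i : V_j\subset\Gamma_i} m_i\,\ord_{V_j}\!\bigl((\pi_1|_{\Gamma_i})^*\Delta\bigr)$. Second, you assert without proof that these $a_j$ are exactly the multiplicities of $V_j$ in $\Gamma_f|_{\Delta\times\Delta'}$, i.e.\ the multiplicities used when computing $\deg_1 f|_{\Delta\vdash\Delta'}$. This is true but deserves a sentence: for $V_j\subset X\times\Delta'$, intersecting $\Gamma_f$ with $\Delta\times Y$ already produces $V_j$ with multiplicity $a_j$, and the further intersection with $X\times\Delta'$ does not change the length at the generic point of $V_j$ because $\pi_2^*f_{\Delta'}$ already vanishes there. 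Finally, the ``subtlety'' you flag about contractions $V$ with $\pi_1(V)$ a single point is actually vacuous: if both $\pi_1(V)$ and $\pi_2(V)$ were points then $V$ would be contained in a single point of $X\times Y$ and could not be a curve, so every contraction contained in $\pi_1^{-1}(\Delta)$ automatically dominates $\Delta$. This is worth noting since it simplifies rather than complicates the argument.
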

\begin{proof}
    We prove the contrapositive. Let $f=(\Gamma_f,\tau_1,\tau_2)$. Say $\Delta\times p$ is a contraction. By Lemma \ref{lem_corr_res}, there exists $\pi''\trianglerighteq \pi'$ such that $f_{\pi\pi''}$ has no contractions, so the projection $\Gamma_{f_{\pi\pi''}}$ to $X_{\pi''}$ is finite. Let $\pi'' = \pi' \circ \tilde{\pi}$. Let $\bigcup_{i=1}^\ell \Delta''_i$ be the strict image of $\Delta$ in $X_{\pi''}$. For each $i$, if $\tilde{\pi} (\Delta''_i)$ is a curve, then 
    \[\deg_1 f_{\pi \pi'} |_{\Delta \vdash \tilde{\pi}(\Delta''_i)} = \deg_1 f_{\pi \pi ''}|_{\Delta \vdash \Delta''}.\]
    By the universal property of blowing up, the exceptional curve of the blowup at $p$ appears among the $\Delta''_i$.
    Therefore, 
    \[\sum_{\Delta'} \deg_1 f|_{\Delta \vdash \Delta'} < \sum_{\Delta''_i} \deg_1 f|_{\Delta \vdash \Delta''_i} = \deg_1 f.\]
\end{proof}

\subsection{Action on divisors} \label{sect_act_div}

\rev{
We now introduce key definitions for understanding degree growth of correspondences. Readers who are primarily interested in applications to the Ivrii Conjecture may skim this section.

\begin{remark} \label{rem_intuition}
For the benefit of readers with less background in algebraic geometry, we first give an informal introduction to these ideas. In complex geometry, given a dominant correspondence $f: \PP^1 \vdash \PP^1$, we may consider the image (or preimage) of a point as a multiset. For instance, if $f(z) = \pm \sqrt{z}$, the image of a general point consists of $2$ distinct elements, but $f(0)$ and $f(\infty)$ each consist of $1$ element, which we would like to consider as having multiplicity $2$. By counting this way, there is a well-defined pushforward homomorphism $f_*$ on $0$-dimensional homology classes, which in this example is just multiplication by $2$. Working with regular \emph{surface} correspondences, it is similarly helpful to consider the image (or preimage) of a \emph{curve} as a multiset of irreducible curves, i.e. a divisor; then there is an induced pushforward homomorphism $f_*$ on divisor numerical equivalence class groups, mimicking the induced map on $1$-complex-dimensional homology groups. Numerical equivalence class groups are free abelian groups of finite rank, so pushforwards can be written as matrices and studied with linear algebra.

The main conceptual difficulty is understanding the multiplicities. For example, the map $f: \PP^2 \to \PP^2$ defined by $[X:Y:Z] \mapsto [X^2:Y^2:Z^2]$ sends a general line to a conic with multiplicity $1$, but it sends the line $\Delta$ defined by $X = 0$ onto itself with multiplicity $2$. In the divisor group $\Div \PP^2$, we have $f_* \Delta = 2 \Delta$. Over $\C$, the multiplicity can be computed as the topological degree of the map $f|_\Delta$.

Working with rational surface correspondences, there is an additional technical difficulty: if a curve $\Delta$ passes through an indeterminacy point, it is not a priori clear whether the ``image of $\Delta$ with multiplicity'' should include the exceptional image curve or not. This is similar to the distinction between strict and total transform of a curve by a blowup. We therefore distinguish between $f_*$, which includes exceptional image curves, and a \emph{strict transform} $f_{\str}$, which does not.
\end{remark}
Now we introduce these concepts formally. As before, in this section $X$ is a smooth, projective, irreducible surface.
}

\begin{defn} \label{def_pfwd}
\rev{Let $f : X \vdash X$ be a dominant rational correspondence. Let $\Div X$ denote the group of Weil divisors on $X$. Let $\Num X$ denote $\Div X$ modulo numerical equivalence.}

    There are induced \emph{pushforward} homomorphisms 
$$f_*: \Div X \to \Div X,$$
$$f_*: \Num X \to \Num X,$$
\rev{defined as follows.}

    Let $u: V \to W$ be a generically finite and dominant morphism of smooth projective surfaces or formal surfaces, and let $\Delta$ be a prime divisor of $V$.

    We define $u_*: \Div V \to \Div W$ as follows.
    If $u(\Delta)$ is a point, then we define $u_* \Delta = 0$. \rev{Otherwise $u(\Delta)$ is a curve, and we define
    $$u_* \Delta = m u(\Delta),$$
    where the multiplicity $m$ is the degree of the function field extension induced by the dominant map of curves $u|_\Delta : \Delta \to u(\Delta)$. (Over $\C$, we have $m = \deg_{\Top} u|_\Delta$.)}
    
    We extend $u_*$ to all divisor classes by linearity. It descends to $u_*: \Num V \to \Num W$.

    There are also \emph{pullback} homomorphisms
    $$u^*: \Div W \to \Div V,$$ 
    $$u^*: \Num W \to \Num V.$$
    Rather than present a full definition, we just explain how to compute these in our setting. Let $\Delta$ be a prime divisor of $W$. If $\deg u = d$ and the inverse image of a general point of $\Delta$ consists of $d$ distinct points, then
    $$ u^* \Delta = u^{-1}(\Delta).$$
    If $f: X \vdash Y$, defined by $(\Gamma_f, \pi_1, \pi_2)$, is a correspondence with a smooth and irreducible graph, then the \emph{pushforward} homomorphism is
    $$f_* : \Num X \to \Num Y,$$
    $$f_* = (\pi_2)_* \circ (\pi_1)^*.$$
\end{defn}
This definition may be extended to graphs $\Gamma_f$ with singular components by working instead on a desingularized graph.

Pullback and pushforward are functorial for regular correspondences, but not for rational correspondences in general. The following criterion, stemming from work of Diller-Favre, describes when pushforward interacts well with composition \cite[Proposition 1.13]{MR1867314}.

\begin{lemma} \label{lem_df}
    Let $f : X \vdash Y$, $g: Y \vdash Z$ be dominant correspondences of smooth, irreducible projective surfaces. Let $f = (\Gamma_f, \pi_1, \pi_2).$ If, for every contraction $V$ of $f$, we have 
    $$\pi_2(V) \cap \Ind g = \varnothing,$$
    then $(g \circ f)_* = g_* \circ f_*$.
\end{lemma}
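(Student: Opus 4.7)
The plan is to generalize the Diller--Favre pushforward/composition criterion from rational maps to correspondences. Since the intersection pairing on $\Num Z$ is non-degenerate, it is enough to fix arbitrary classes $\Delta\in\Num X$ and $\Delta''\in\Num Z$ and show that $(g\circ f)_\ast\Delta\cdot\Delta'' = g_\ast(f_\ast\Delta)\cdot\Delta''$.

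To set up the computation, I would lift everything to $X\times Y\times Z$, writing $p_X,p_Y,p_Z$ and $p_{XY},p_{YZ},p_{XZ}$ for the projections to factors and pairs of factors. Applying \eqref{eq_pfwd_ix} twice and using the projection formula for the proper morphisms $\pi_2^f$ and $\pi_1^g$, the right-hand side can be written as the intersection number
\[ \alpha \colonequals p_{XY}^\ast[\Gamma_f]\cdot p_{YZ}^\ast[\Gamma_g]\cdot p_X^\ast\Delta\cdot p_Z^\ast\Delta'' \]
on $X\times Y\times Z$. The cycle $p_{XY}^\ast[\Gamma_f]\cdot p_{YZ}^\ast[\Gamma_g]$ is a $2$-cycle whose underlying support is the fiber product $\Gamma_f\times_Y\Gamma_g$. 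The left-hand side, meanwhile, equals $\Gamma_{g\circ f}\cdot p_X^\ast\Delta\cdot p_Z^\ast\Delta''$ on $X\times Z$, and Definition \ref{def_composite} expresses $\Gamma_{g\circ f}$ as the $p_{XZ}$-pushforward of $p_{XY}^\ast[\Gamma_f]\cdot p_{YZ}^\ast[\Gamma_g]$ with all ``excess'' components discarded, namely those components whose image in $X\times Z$ is not dominant over $X$ or over $Z$. So the two sides of the identity differ exactly by the contribution of these excess components to $\alpha$.

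The crucial step is a dimension count on $\Gamma_f\times_Y\Gamma_g$. A $2$-dimensional component of this fiber product that projects to a curve (rather than a surface) in $X\times Z$ must arise from a curve $V\subset\Gamma_f$ along which the fiber of $\Gamma_g$ over $\pi_2^f(V)\subset Y$ has positive dimension generically. Because $\Ind g$ is zero-dimensional in the standard setting of surface correspondences, the only way this can occur is if $V$ is a contraction of $f$ and $\pi_2^f(V)\in\Ind g$ — exactly the configuration excluded by hypothesis. Hence no excess components appear, $p_{XZ}$ pushes $p_{XY}^\ast[\Gamma_f]\cdot p_{YZ}^\ast[\Gamma_g]$ forward to $\Gamma_{g\circ f}$ with the correct multiplicities, and the two sides of the desired identity coincide.

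The main obstacle is the scheme-theoretic bookkeeping: the cycle-theoretic product $p_{XY}^\ast[\Gamma_f]\cdot p_{YZ}^\ast[\Gamma_g]$ may carry excess multiplicities from non-transverse intersection, and one must verify that these multiplicities agree with those assigned to the components of $\Gamma_{g\circ f}$ in Definition \ref{def_composite}. I would handle this by passing to desingularizations of the graphs and working with the proper transforms, as suggested in Section \ref{sect_formal}, where the fiber-product intersection becomes transverse on the open set where $\pi_2^f$ and $\pi_1^g$ are finite. The remaining argument then mirrors the Diller--Favre analysis of \cite[Proposition 1.13]{MR1867314}, with ``contractions of $f$'' and ``$\Ind g$'' in place of ``curves contracted by a rational map $f$'' and ``indeterminacy of $g$.''
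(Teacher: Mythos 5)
Your proposal is correct and, modulo the technical bookkeeping you flag, follows the same Diller--Favre-style argument that the paper invokes by deferring to \cite[Proposition 1.4]{MR3342248}. The essential point — that the hypothesis on contractions forces the triple intersection on $X\times Y\times Z$ to have only components that survive in $\Gamma_{g\circ f}$ — matches the cited source. One small imprecision: you restrict attention to excess components whose image in $X\times Z$ is a curve, but a $2$-dimensional component of $\Gamma_f\times_Y\Gamma_g$ sitting over a contraction of $f$ can also project onto a $2$-dimensional (but non-dominant) subvariety $W\times W'\subset X\times Z$; your dimension count extends to cover this case, since the argument really shows that any component whose projection to $\Gamma_f$ is a curve $V$ and whose generic fiber over $V$ is positive-dimensional forces $\pi_2^f(V)\in\Ind g$, regardless of the dimension of the image in $X\times Z$. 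With that adjustment the sketch is complete at the conceptual level.
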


\begin{proof}
    The proof given in \cite[Proposition 1.4]{MR3342248} for rational maps over $\C$ adapts to our setting.
\end{proof}

\begin{defn} \label{def_str_div}
Let $f:X \vdash Y$. The \emph{strict transform} of a prime divisor $\Delta \in \Div X$ is the divisor
$$f_{\str} \Delta \colonequals \sum_{V} m_{V} V,$$
where $V$ ranges over curves in the \rev{strict image $f_{\str} (\Delta)$, i.e. the Zariski closure in $Y$ of $f(\Delta \smallsetminus \Ind f)$,} and $m_V$ is the weight of $V$ in $f_* \Delta$. This extends linearly to a homomorphism $f_{\str} : \Div X \to \Div Y$.
\end{defn}

\rev{
\begin{lemma} \label{lem_diff_support}
    Let $f:X \vdash Y$, and let $\Delta$ be a divisor. Then $f_*\Delta - f_{\str}\Delta$ is supported on exceptional image curves.
\end{lemma}
\begin{proof}
    It suffices to prove the claim when $\Delta$ is a prime divisor, since then we can extend linearly to all divisors. The support of $f_* \Delta$ is contained in the total image $f(\Delta)$. The curves in $f(\Delta)$ are either exceptional image curves in $f(\Ind f)$, or are in the Zariski closure of $f(\Delta \smallsetminus \Ind f)$ in $Y$, which is the strict image $f_{\str}(\Delta)$. Therefore, by definition of the multiplicities $m_V$ in Definition \ref{def_str_div}, the difference consists only of exceptional image curves.
\end{proof}
}

\subsection{Dynamical degrees}
In this section, we assume that $f : X \vdash X$ is a dominant rational correspondence on an irreducible smooth projective surface $X$ over $\bk$. 

\begin{defn}[\cite{MR4048444}] \label{def_dd}
Let $\Delta \in \Num X$ be an ample class.
The \emph{(first) dynamical degree} of $f$, denoted $\lambda_1(f)$, is the quantity
\begin{equation} \label{eq_dd_norm_def}
    \lambda_1(f) \colonequals \lim_{n \to \infty} ((f^n)_* \Delta \cdot \Delta)^{1/n}.
\end{equation}
The limit \eqref{eq_dd_norm_def} exists and is independent of the choice of $\Delta$.
\end{defn}

\begin{defn} \label{def_as}
We say $f: X \vdash X$ is \emph{algebraically stable} if $(f_*)^n = (f^n)_*$ on $\Num X$ for all $n \in \N$.
\end{defn}

The \emph{spectral radius} of a linear endomorphism $T$ is $\rad T \colonequals \max_\lambda \abs{\lambda},$ where $\lambda$ ranges over the eigenvalues of $T$. \rev{If a correspondence $f$ is algebraically stable, then $\lambda_1(f) = \rad f_*$. See Lemma \ref{lem_AAS}.}

\rev{Next, we introduce some tools for understanding dynamical degrees, both of individual correspondences and correspondences in families. A family of smooth, irreducible, projective surfaces parametrized by a scheme $S$ can be described as a projective, surjective map $\pi: X \to S$ that is smooth of relative dimension $2$. This last condition is an algebraic version of the notion of local submersion from differential geometry, and it ensures that the fibers are smooth.}

\begin{thm}[standard properties of dynamical degrees] \label{thm_dd_props}
\leavevmode
\begin{enumerate}
    \item \label{it_birat_invariant} The dynamical degree $\lambda_1(f)$ is a birational conjugacy invariant of the pair $(X, f)$. That is, if $g : Y \dashrightarrow X$ is a birational map, then
    $$\lambda_1(f) = \lambda_1 (g^{-1} \circ f \circ g).$$
    \item \label{it_first_iterate} We have
    $$\lambda_1(f) \leq \rad (f_* : \Num X \to \Num X).$$
    \item \label{it_as} If for all $n \geq 0$, we have
    $$ f^{n}(\Exc f) \cap \Ind f = \varnothing,$$
    then $f$ is algebraically stable.
    \item \label{it_specialization} Let $S$ be a smooth, integral scheme with generic point $\eta$. Let $\pi : X \to S$ be a \rev{projective, surjective morphism that is smooth of} relative dimension $2$. Let $X_p$ denote the fiber at $p \in S$. Let $f: X \vdash X$ be a rational correspondence \rev{such that $\pi = \pi \circ f$}, with the property that, for each $p \in S$, the restriction $\Gamma_f|_{X_p \times X_p}$ is the graph of a dominant rational correspondence $f_p : X_p \vdash X_p$. Then, for all $p \in S$, we have
        $$ \lambda_1 ( f_p ) \leq \lambda_1 (f_\eta).$$
\end{enumerate}
\end{thm}
\begin{proof}
Claim (\ref{it_birat_invariant}) is a deep theorem of Truong \cite[Theorem 1.1]{MR4048444}. Claim (\ref{it_first_iterate}) is \cite[Theorem 2.18 (2)]{billiardsI}.
Claim (\ref{it_as}) follows from iterating Lemma \ref{lem_df}.
Claim (\ref{it_specialization}) is a straightforward generalization of \cite[Lemma 4.1]{MR3332894}.
\end{proof}

\rev{Recall from Definition \ref{def_es} that a dominant surface correspondence $f: X \vdash X$ is} \emph{essentially stable} if there exists a big-and-nef divisor $\Delta$ on $X$ such that, for all $n \in \N$,
    \begin{equation}
    \left(f^n\right)_*\Delta\cdot \Delta  = (f_*)^n \Delta \cdot \Delta. 
    \label{eq_def_es}
    \end{equation}

This property is a weakening of algebraic stability that still permits computation of the dynamical degree, as shown by the following lemma.

\begin{lemma} \label{lem_AAS}
    If $f: X \vdash X$ is essentially stable, then
    $$ \lambda_1(f) = \rad f_*. $$
\end{lemma}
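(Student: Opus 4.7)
The strategy is to compute the dynamical degree via the class $\Delta$ rather than an ample class, use essential stability to rewrite it in terms of $(f_*)^n$, and then identify the resulting limit with the spectral radius via a Perron--Frobenius-type argument on $\Num X \otimes \R$.

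First, I would establish that any big-and-nef class $\Delta$ computes the dynamical degree, i.e.\ $\lambda_1(f) = \lim_{n \to \infty}((f^n)_*\Delta \cdot \Delta)^{1/n}$. Writing $\Delta = A_1 + E_1$ with $A_1$ ample and $E_1$ effective (possible since $\Delta$ is big), and using that $\Delta - \epsilon A$ is pseudo-effective for some $\epsilon > 0$ and some fixed ample $A$, while $kA - \Delta$ is effective for some $k$, one sandwiches $(f^n)_*\Delta \cdot \Delta$ between constant multiples of $(f^n)_* A \cdot A$ and $(f^n)_* A_1 \cdot A_1$. The key inputs are that $(f^n)_*$ preserves pseudo-effectivity, $\Delta$ is nef, and the intersection of a pseudo-effective class with a nef class is nonnegative. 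Both bounding sequences grow at rate $\lambda_1(f)$ by the independence of the dynamical degree from the ample class used. Combined with essential stability, this yields $\lambda_1(f) = \lim_{n \to \infty}((f_*)^n \Delta \cdot \Delta)^{1/n}$.

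Next, set $T = f_*$ acting on $\Num X \otimes \R$ and $\rho = \rad T$. The upper bound $\limsup ((T^n\Delta) \cdot \Delta)^{1/n} \leq \rho$ comes from Gelfand's formula $\lim \|T^n\|^{1/n} = \rho$ together with $|(T^n\Delta) \cdot \Delta| \leq \|T^n\|\cdot \|\Delta\|^2$. For the matching lower bound, I would exploit that $T$ preserves the pseudo-effective cone, so its adjoint $T^* = f^*$ (with respect to the intersection form, via projection formula on the graph of $f$) preserves the dual cone, which is the nef cone on a surface. By finite-dimensional Perron--Frobenius for proper closed convex cones, $T^*$ admits an eigenvector $u$ in the nef cone with $T^* u = \rho u$. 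The identity
\[
(T^n \Delta) \cdot u = \Delta \cdot (T^*)^n u = \rho^n (\Delta \cdot u)
\]
then provides exact growth of rate $\rho^n$ in the direction of $u$. The Hodge index theorem applied to the ample part $A_1$ of $\Delta$ gives $A_1 \cdot u > 0$ for any nonzero nef $u$ (nefness prevents negative self-intersection in the $A_1^\perp$ direction), hence $\Delta \cdot u \geq A_1 \cdot u > 0$.

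Finally, to transfer this lower bound from pairing with $u$ to pairing with $\Delta$, I would use that $\Delta$ is big: by Hodge index, $\Delta \cdot N > 0$ for every nonzero nef $N$, so the ratio $(u \cdot N)/(\Delta \cdot N)$ is continuous and bounded on the (compact) nef unit sphere. Taking $C > 0$ to be its supremum, $C\Delta - u$ is nef, and pairing the pseudo-effective class $T^n \Delta$ with this nef class gives $(T^n \Delta) \cdot \Delta \geq \rho^n (\Delta \cdot u)/C$, so $\liminf ((T^n\Delta) \cdot \Delta)^{1/n} \geq \rho$. The main obstacle is this last chain: producing the Perron--Frobenius eigenvector in the nef cone, verifying strict positivity $\Delta \cdot u > 0$ via Hodge index, and dominating $u$ by a multiple of $\Delta$ in the nef order --- all of which genuinely use both the bigness and the nefness of $\Delta$, and would not go through if $\Delta$ were only nef or only big.
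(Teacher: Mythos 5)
Your framework matches the paper's, but your Step 1 asserts a two-sided estimate that the sandwich does not deliver, and the paper is careful not to need it. Expanding $(f^n)_*\Delta\cdot\Delta$ with $\Delta = A_1 + E_1$ produces cross terms such as $(f^n)_*A_1\cdot E_1$ and $(f^n)_*E_1\cdot E_1$, which are of the form (pseudoeffective)$\,\cdot\,$(effective) and hence have no sign in general; the tools you list (pushforward preserves pseudoeffectivity, nef $\cdot$ pseudoeffective $\geq 0$) yield only the upper bound $\lambda_1(f) \geq \lim ((f^n)_*\Delta\cdot\Delta)^{1/n}$. The paper instead runs a \emph{circle} of inequalities $\lambda_1(f) \geq \lim ((f^n)_*\Delta\cdot\Delta)^{1/n} = \lim ((f_*)^n\Delta\cdot\Delta)^{1/n} = \rad f_* \geq \lambda_1(f)$, closing the loop not with the missing lower bound but with the ``first iterate estimate'' $\rad f_* \geq \lambda_1(f)$ cited from \cite{billiardsI}. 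You neither prove nor invoke this, so even granting everything else your argument only reaches $\lambda_1(f) \geq \rad f_*$.

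There is a second gap in Step 4: taking $C = \sup (u\cdot N)/(\Delta\cdot N)$ over the nef unit sphere controls $(C\Delta - u)\cdot N$ only for $N$ nef, whereas nefness of $C\Delta - u$ requires nonnegativity against all \emph{pseudoeffective} $N$. Since $\Delta$ is big-and-nef but not assumed ample, there can exist effective $\alpha$ with $\Delta\cdot\alpha = 0$ and $u\cdot\alpha > 0$, so no finite $C$ makes $C\Delta - u$ nef, and the pairing of the pseudoeffective class $(f_*)^n\Delta$ against $C\Delta - u$ is not known to be nonnegative. The pieces you do get right --- Gelfand's formula for the upper bound, the intersection adjoint $f^*$ preserving the nef cone, a Perron--Frobenius eigenvector $u$ in the nef cone with $\Delta\cdot u > 0$ --- are indeed the content behind the paper's appeal to Perron--Frobenius for cone-preserving operators, but the two gaps above prevent the argument from closing.
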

\begin{proof}
    Let $\Delta$ be a big-and-nef divisor satisfying \eqref{eq_def_es}. We claim the following circle of inequalities holds:
    \begin{align*}
        \lambda_1(f) & \geq 
        \lim_{n\to \infty}\left(\left(f^n\right)_*\Delta\cdot \Delta\right)^{1/n} \\
        &= \lim_{n\to \infty} \left(\left(f_*\right)^n\Delta\cdot \Delta\right)^{1/n} \\
        &= \rad f_* \\
        & \geq \lambda_1(f).
    \end{align*}
    These estimates are all standard. The first line holds because any divisor is a difference of ample divisors. The second line is \eqref{eq_def_es}. To prove the third line, note that big-and-nef classes are in the interior of the pseudoeffective cone \cite{MR2095471}, and $f_*$ maps the pseudoeffective cone into itself; then apply the Perron-Frobenius theorem for cone-preserving operators. The last line is Theorem \ref{thm_dd_props} (\ref{it_first_iterate}).
\end{proof}

\section{Billiards and blowups} \label{sect_model}

\subsection{Billiards in algebraic curves} \label{sect_setup}
In this subsection, we recall the definition of the algebraic billiards correspondence, following \cite{billiardsI}. Our presentation here is more general in that we allow reflection with respect to any choice of non-degenerate quadratic form $\Theta$, which allows us to reason about changes of coordinates. We limit attention to the case of smooth plane curves over a field $\bk$ of characteristic not equal to $2$.

Let $X = \PP^2$ with coordinates $[X_0 : X_1 : X_2]$, and $Q = \PP^2$ with coordinates $[Q_0 : Q_1 : Q_2]$. We introduce affine coordinates
\begin{equation}
x_0 = \frac{X_0}{X_2}, \quad x_1 = \frac{X_1}{X_2}, \quad q_0 = \frac{Q_0}{Q_2}, \quad q_1 =\frac{Q_1}{Q_2}. \label{eq_aff_coords}
\end{equation}
These coordinates define the \emph{main affine subsets}
$$ \A^2 = \Spec \bk[x_0, x_1] \subset X, \quad \A^2 = \Spec \bk[q_0, q_1] \subset Q.$$
Having chosen these affines, there are distinguished lines at infinity 
$$X_\infty \colonequals X \smallsetminus \A^2, \quad Q_\infty \colonequals Q \smallsetminus \A^2.$$
We view the space $\A^4 = \Spec \bk[x_0, x_1, q_0, q_1]$ as the tangent space $T\A^2$, where $\A^2 = \Spec \bk[x_0, x_1]$ and the cotangent vectors $q_0 = dx_0, q_1 = dx_1$ are treated as formal generators of a symmetric algebra. The coordinates $q_0, q_1$ identify all the individual tangent spaces and are translation-invariant. The compactification $X \times Q$ has the nice feature that there is a canonical identification of both $X_\infty$ and $Q_\infty$ with the space $\PP^1$ of slopes of lines in $\A^2$.

Fix $d \geq 2$. Let $C \subset X$ be a smooth, irreducible curve of degree $d$, defined by some affine equation $f_C(x_0, x_1)= 0$.

Let $\Theta : T_0 \A^2 \to \bk$ be a non-degenerate quadratic form, and let $D$ be the Zariski closure of $\Theta^{-1}(1)$ in $Q$. We call $X \times D$ the \emph{space of unit tangent vectors with respect to $\Theta$}, although properly speaking, it is a compactification thereof. Then $C \times D$ is (a compactification of) the space of unit tangent vectors for $\Theta$ based on $C \cap \A^2$. 

Points of $D$ are called \emph{directions}. There is a $2$-to-$1$ ramified cover, the \emph{slope map}, defined by
$$[ \cdot ] : D \to \PP^1,$$
$$[Q_0 : Q_1 : Q_2] \mapsto [Q_0 : Q_1].$$

Let
$$C_\infty \colonequals C \cap X_\infty,$$
$$D_\infty \colonequals D \cap Q_\infty.$$
Points in $D_\infty$ are called \emph{isotropic directions}. Their images in $\PP^1$ are called \emph{isotropic slopes}. 
Since $\charac k \neq 2$, any point $x \in C$ with non-isotropic tangent slope has $2$ tangent directions, with the same slope.

The billiards correspondence $b = b_{C,D}$ associated to the pair $(C,D)$ is a rational correspondence
$$b : C \times D \vdash C \times D,$$
$$ b = r \circ s,$$
where $s = s_{C,D}$ is secant and $r = r_{C,D}$ is reflection, defined as follows.

\begin{defn}[secant] \label{def_s}
We shall define $\Gamma_s \subset (C \times D)^2$. We denote the first and second factors of this product by $C \times D$ and $C' \times D'$, respectively, and similarly add a prime to denote any quantity that is defined in the second factor.

We define a relation $S_+$ on $X \times Q$, via its graph, as the subvariety $X \times Q \times X' \times Q'$ cut out by the equations
\begin{align}
    &q = q', \label{eq_splus_1} \\
    &\begin{vmatrix}
    X_0 & X'_0 & Q_0 \\
    X_1 & X'_1 & Q_1 \\
    X_2 & X'_2 & 0
    \end{vmatrix} = 0. \label{eq_splus_2}
\end{align}
Restricting $S_+$ to $C \times D \times C' \times D'$ gives us a relation $s_{+}$ on $C \times D$. Notice that the graph $\Gamma_{s_+}$ contains the graph $\Gamma_1$ of the identity map. The \emph{secant correspondence}
$$s : C \times D \vdash C \times D$$
is the variety with graph 
$$\Gamma_{s} = \Gamma_{s_{+}} - \Gamma_1.$$
Then $s$ is a dominant $(d-1)$-to-$(d-1)$ rational correspondence. To check that the variety defined by these equations defines a dominant correspondence, one must show every component is a surface projecting surjectively to each factor; we prove this in \cite[Proposition 3.8]{billiardsI} with the assumption that $D$ is the Euclidean form, and the general case is similar.
\end{defn}

\begin{defn}[reflection] \label{def_r} 
 Given a non-isotropic slope $m \in \PP^1$, the \emph{reflection by $m$} is the unique map $r_m: D \to D$
that fixes the two directions of slope $m$ and exchanges the $2$ points of $D_\infty$. Such a map exists because $D$ is centrally symmetric, and it is unique because $D$ is abstractly isomorphic to $\PP^1$ and thus has a sharply $3$-transitive automorphism group. Since $r_m^2$ fixes at least $3$ points, $r_m^2$ is the identity, so $r_m$ is an involution.

Let $t : C \to \PP^1$ be the tangent slope map, sending $x \in C \smallsetminus C_\infty$ to 
$$t(x) = \left[ -\frac{\partial f_C}{\partial x_1}(x): \frac{\partial f_C}{\partial x_0} (x) \right].$$
This is defined because $C$ was assumed smooth. 

The \emph{reflection map}
$$r_{C,D}: C \times D \dashrightarrow C \times D,$$
is defined for $x \in C \smallsetminus C_\infty$, $v \in D \smallsetminus D_\infty$ by 
$$ (x,v) \mapsto (x, r_{t(x)} v).$$
\end{defn}

\rev{Definition \ref{def_r} is based on the observation of Glutsyuk that the billiards reflection law on the projectivized tangent space permutes isotropic directions \cite{MR3224419}.}

\begin{defn}
    The \emph{billiards correspondence} $b = b_{C, D}$ associated to $(C,D)$ is
$$b_{C,D} : C \times D \vdash C \times D,$$
$$b_{C,D} \colonequals r_{C,D} \circ s_{C,D}.$$
\end{defn}
Since $s_{C,D}$ is $(d-1)$-to-$(d-1)$ and $r_{C,D}$ is $1$-to-$1$, the composite $b_{C,D}$ is $(d-1)$-to-$(d-1)$.

For applications to classical billiards, the obvious choice of quadratic form $\Theta$ is $q_0^2 + q_1^2$; then orbits of the classical billiard map in the real locus are also orbits of $b$. However, it can also be helpful to change coordinates, as follows.

Consider $\A^2 = X \smallsetminus X_\infty$. Let $J \in \Aut(X)$ be a projective transformation such that $J(X_\infty) = X_\infty$. Then $J$ induces a map $T\A^2 \to T\A^2$, linear on each tangent space, and $J_* : T_x \A^2 \to T_{J(x)}\A^2$ is independent of $x$ after identifying $T_x \A^2$ and $T_{J(x)} \A^2$ with $T_0 \A^2$ by translation. In other words, the differential of $J$ has the same matrix in the canonical coordinates $q_0, q_1$ of each tangent space. Hence there is a well-defined map $J_*: Q \to Q$. We note that if $J$ fixes the origin, then it is linear, and the matrix of $J_*$ in basis $q_0, q_1$ agrees with that of $J$ in $x_0, x_1$.

\begin{lemma}[changing coordinates] \label{lemma_COC}
 Let $J_* : Q \to Q$ be the pushforward induced on tangent vectors. Then $J \times J_*$ conjugates $s_{C,D}$ to $s_{J(C), J_* (D)}$, $r_{C,D}$ to $r_{J(C), J_* (D)}$, and $b_{C,D}$ to $b_{J(C), J_* (D)}$. In particular, if $J(C) = C$ and $J_*(D) = D$, then $J \times J_*$ is an automorphism of $s_{C,D}$, $r_{C,D}$, and $b_{C, D}$.
\end{lemma}
\begin{proof}
To prove the claims about $b$, it suffices to prove the claims about $s$ and $r$. To check that $s_{C,D}$ conjugates to $s_{J(C), J_* (D)}$, note that $J$ and $J_*$ have the same action on the space of slopes $\PP^1$, so $J \times J_*$ preserves \eqref{eq_splus_1} and \eqref{eq_splus_2}. To check $r$, we use that $J_*(D)$ is again the unit tangent space of some nondegenerate quadratic form, and that $J_*(D)$ takes $D_\infty$ to $J_*(D)_\infty$ and respects the tangent slope map $t$, being a differential.
\end{proof}

From now on, we write $s = s_{C,D}$, $r = r_{C,D}$, $b = b_{C,D}$ when the underlying pair $C,D$ is fixed.

The next two lemmas collect some basic facts about the correspondences $s$ and $r$ that we proved in our previous work.

\begin{lemma}[basic properties of $s$] \label{lem_s_basic}
    Let $C$ be a smooth curve in $\PP^2_{\bk}$ of degree $d \geq 2$, and let $D$ be the unit tangent space of a nondegenerate quadratic form $\Theta$. Assume $\charac \bk \neq 2$.
    \begin{enumerate}
        \item The relation $s$ defines a self-adjoint $(d-1)$-to-$(d-1)$ correspondence on $C \times D$.
        \rev{For any $(x, v) \in C \times D$ not in $\Ind s$, let $\ell$ be the line through $x$ of slope $v$, and consider $C \cap \ell$ as a multiset of total multiplicity $d$. Then $C \cap \ell$ contains $x$ with positive multiplicity, so the multiset $(C \cap \ell) - x$ obtained by lowering the multiplicity of $x$ by $1$ is defined. The image $s(x,v)$ is the multiset of total multiplicity $d-1$,
        $$s(x,v) = ((C \cap \ell) - x) \times v.$$
        }
        \item The indeterminacy locus of $s$ is
$$\Ind s = \Exc s = \{ (x_\infty, v) : \; x_\infty \in C_\infty, \; v \in D, \; [v] = t(x_\infty) \}.$$
    \item For each $p = (x_\infty, v) \in \Ind s$, let $C^{(p)} \colonequals C \times v$. The set of contractions of $s$ is
    $$\{ C^{(p)} \times p : p \in \Ind s\}.$$
    \end{enumerate}
\end{lemma}

\begin{proof}
    The following proofs were stated for the Euclidean form, and the general case follows by changing coordinates using Lemma \ref{lemma_COC}.
    \begin{enumerate}
        \item Special case of \cite[Proposition 3.8(1)]{billiardsI}.
        \item \cite[Proposition 3.8(3, 4)]{billiardsI}, noting that $\Ind s = \Exc s$ by self-adjointness.
        \item \cite[Proposition 3.8(3, 4)]{billiardsI}.
    \end{enumerate}
\end{proof}

\begin{lemma}[basic properties of $r$] \label{lem_r_basic}
    Let $C$ be a smooth curve in $\PP^2_{\bk}$ of degree $d \geq 2$, and let $D$ be the unit tangent space of a nondegenerate quadratic form $\Theta$. Assume $\charac \bk \neq 2$.
    \begin{enumerate}
    \item The relation $r$ defines a self-adjoint $1$-to-$1$ rational correspondence on $C \times D$.
    \item The indeterminacy locus of $r$ is
$$\Ind r = \Exc r = \{ (x, v_\infty) : \; x \in C, \; v_\infty \in D_\infty, \; [v_\infty] = t(x) \}.$$
    \item For each $p = (x, v_\infty) \in \Ind r$, let $D^{(p)} \colonequals x \times D$. The set of contractions of $r$ is
    $$ \{ D^{(p)} \times p : p \in \Ind r \}.$$
    \end{enumerate}
\end{lemma}

\begin{proof}
The following proofs were stated for the Euclidean form, and the general case follows by changing coordinates using Lemma \ref{lemma_COC}.
\begin{enumerate}
    \item \cite[Proposition 3.13 (1)]{billiardsI}.
    \item \cite[Proposition 3.13 (3,4)]{billiardsI}; note $\Ind r = \Exc r$ by self-adjointness.
    \item \cite[Proposition 3.13 (3,4)]{billiardsI}.
\end{enumerate}
\end{proof}

\subsection{The Fermat hyperbola} \label{sect_fh}
Now we specialize to our curve of interest. For the rest of the paper, let $\bk$ be an algebraically closed field such that \rev{$\charac \bk = 0$ or} $\gcd(\charac \bk, 2d) = 1$.

The \emph{Fermat hyperbola} of degree $d \geq 2$ is the projective plane curve
$$C : \; (X_0 - iX_1)^d + (X_0 + iX_1)^d = X_2^d.$$
The standard choice of quadratic form for algebraic billiards is the so-called Euclidean form $\Theta = q_0^2 + q_1^2$, which has space of unit tangent vectors
$$D: \; Q_0^2 + Q_1^2 = Q_2^2.$$
\rev{The key property of the pair $(C,D)$ is that, at each point $x \in C$ with isotropic tangent slope, the tangent line to $C$ at $x$ has no other intersections with $C$.} This is easiest to see after changing coordinates to the standard Fermat curve billiard with quadratic form $y_0 y_1$. Specifically, the projective transformation
\begin{equation} \label{eq_J}
J : \PP^2 \to \PP^2,
\end{equation}
$$ [X_0 : X_1 : X_2] \mapsto [X_0 - iX_1 : X_0 + iX_1 : X_2],$$
takes $C, D$ to
$$J(C) :\;  Z_0^d + Z_1^d = Z_2^d,$$
$$J_*(D) : \; Y_0 Y_1 = Y_2^2.$$

By Lemma \ref{lemma_COC}, the map $J \times J_*$ is a conjugacy from $b_{C, D}$ to $b_{J(C), J_* (D)}$. Computing with $b_{J(C), J_* (D)}$ is easier because the isotropic directions are $0$ and $\infty$ rather than $\pm i$.

\begin{lemma}[geometry of the Fermat hyperbola] \label{lem_fh_geo} \leavevmode
\begin{enumerate}
    \item 
The Fermat hyperbola $C$ is a smooth curve in $\PP^2$.
\item There are $d$ points in $C_\infty$. If $x_\infty = [X_0 : X_1 : 0 ] \in C_\infty$, then $t(x_\infty) = [X_0 : X_1]$.
\item The points $[1 : \pm i : 0]$ are not in $C_\infty$. \label{it_non_isot}
\item The points of $C$ with tangent slope $\pm i$ are precisely the points
$$\left\{ \left[ \frac{\zeta}{2} : \mp \frac{i\zeta}{2} : 1 \right] : \zeta^d = 1 \right\}.$$
At each of these points, the tangent line to $C$ \rev{has maximum possible tangency order, and thus has no other intersections with $C$.} \label{it_no_other_intersections}
\end{enumerate}
\end{lemma}

\begin{proof} \leavevmode
\begin{enumerate}
    \item Since \rev{$d \neq 0$ in $\bk$,} the Jacobian criterion for $J(C)$ shows smoothness.
    \item Notice that $J(C)$ has $d$ points at infinity and $J(X_\infty) = X_\infty$. The claim about $t(x_\infty)$ is clear because the intersections $C \cap X_\infty$ are transversal.
    \item These points do not satisfy the defining equation of $C$.
    \item Since $J([1:i:0]) = [1:0:0]$ and $J([1:-i:0]) = [0:1:0]$, the transformation $J$ takes slope $i$ to $0$ and slope $-i$ to $\infty$. On $J(C)$, the points of tangent slope $0$ are exactly those of the form $[0 : \zeta : 1]$, and there are no other points on $\Phi$ of the form $[Z_0 : \zeta : 1]$. Since $J^{-1}([0 : \zeta : 1]) = [\zeta: - i \zeta : 2]$, this proves the claim for slope $i$. The claim for $-i$ follows by Galois.
\end{enumerate}
\end{proof}

An \emph{exceptional point} of a correspondence $f: V \vdash V$ is a point $p \in V$ such that $f^{-1}(p) = \{p\}$. The property that we need concerning the Fermat hyperbola is that the indeterminacy points of $s$ and $r$ are exceptional for each other.

\begin{lemma}[exceptionality] \label{lem_e}
Let $C$ be the Fermat hyperbola, and consider $s, r : C \times D \vdash C \times D.$
\begin{enumerate}
\item There are $2d$ points in $\Ind s$.
\item There are $2d$ points in $\Ind r$, namely the points of the form
$$ \left(\left[ \frac{\zeta}{2} : \mp \frac{i\zeta}{2} : 1 \right], \left[1 : \pm i : 0 \right] \right)$$
for each $d$-th root of unity $\zeta$. \label{it_ind_r}
\item $\Ind s \cap \Ind r = \varnothing$.
\item If $p \in \Ind s$, then $r^{-1}(p) = \{p\}$.
\item If $p \in \Ind r$, then $s^{-1}(p) = \{p\}$. \label{it_e_ind_r}
\end{enumerate}
\end{lemma}

\begin{proof} \leavevmode
    \begin{enumerate}
        \item By Lemma \ref{lem_fh_geo}, there are $d$ points in $C_\infty$ and none of them are $[1:\pm i : 0]$, so each contributes $2$ points in $\Ind s$ by Lemma \ref{lem_s_basic}.
        \item Lemma \ref{lem_r_basic} describes $\Ind r$ in terms of the points of $C$ of isotropic tangent slope, and we computed these in Lemma \ref{lem_fh_geo} (\ref{it_no_other_intersections}).
        \item By Lemma \ref{lem_s_basic} and Lemma \ref{lem_r_basic}, if $p \in \Ind r \cap \Ind s$, then $p = (x_\infty, v_\infty)$ where $x_\infty \in C_\infty$ and $v_\infty \in D_\infty$, but this is false by \eqref{it_ind_r}. 
        \item By Lemma \ref{lem_s_basic}, if $p \in \Ind s$, then $p = (x_\infty,v)$ where $x_\infty \in C_\infty$ and $[v] = t(x_\infty)$. By Lemma \ref{lem_fh_geo} (\ref{it_non_isot}), $x_\infty \neq [1 : \pm i : 0]$, so since $x_\infty \in C_\infty$, the slope $t(x_\infty)$ is non-isotropic. The reflection $r_x$ at any point $x \in C$ of non-isotropic tangent slope fixes the tangent directions to $C$ at $x$, so $r$ fixes $(x_\infty,v)$.
        \item By Lemma \ref{lem_r_basic}, if $p \in \Ind r$, then $p = (x, v_\infty)$ where $v_\infty \in D_\infty$ and $[v_\infty] = t(x)$. Then
        \begin{align*}
            s^{-1}(p) &= s(p) & \text{(self-adjointness)}\\
            &= ((C \cap \ell(x, v_\infty)) \smallsetminus \{x\}) \times v_\infty & \text{(Lemma \ref{lem_s_basic})} \\
            &= \{(x, v_\infty)\} & \text{(as sets, by Lemma \ref{lem_fh_geo} (\ref{it_no_other_intersections}))} \\
            &= \{p\}.
        \end{align*}
    \end{enumerate}
\end{proof}

In classical billiards over $\R$, tables with rotational or reflective symmetry give rise to billiard maps with automorphisms. This is because rotations and reflections belong to the orthogonal group $O_2(\R)$, which is the group of linear transformations preserving the Euclidean metric. The Fermat hyperbola and its billiard have symmetry over $\R$: rotation by $2\pi / d$ and reflection across the horizontal axis, as suggested by Figure \ref{fig_fh}. Over $\bk$, there are also symmetries, as shown by the next lemma.

\begin{lemma}[automorphisms] \label{lemma_aut}
The correspondences $s_{C,D}$, $r_{C,D}$, and $b_{C,D}$ share a group of automorphisms that acts transitively on $\Ind r$.
\end{lemma}

\begin{proof}
Let $\zeta$ denote a primitive $d$-th root of unity in $\bk$. Let $\xi, \phi \colon \PP^2 \to \PP^2$ be the automorphisms given by the $\PGL_3$-elements
\[
\xi = \begin{bmatrix}
    \frac{\zeta + \zeta^{-1}}{2}  &  -\frac{\zeta - \zeta^{-1}}{2i}  &  0 \\
    \frac{\zeta - \zeta^{-1}}{2i}  &  \frac{\zeta + \zeta^{-1}}{2}  &  0 \\
    0 & 0 & 1
\end{bmatrix},\quad
\phi = \begin{bmatrix}
    1&0&0\\
     0&-1&0\\
     0&0&1
\end{bmatrix}.
\]
Both maps preserve $X_\infty$, and $\xi(C) = C$, $\xi_*(D)$, $\phi(C) = C$, $\phi_*(D) = D$.
Then by Lemma \ref{lemma_COC}, $\xi \times \xi_*$ and $\phi \times \phi_*$ are automorphisms of the surface $C \times D$ and of the correspondences $s_{C,D}$, $r_{C,D}$, and $b_{C,D}$. Transitivity is easy to check from Lemma \ref{lem_e} (\ref{it_ind_r}).
\end{proof}

Over $\C$, taking $\zeta = e^{2\pi i /d}$, the matrix $\xi$ is the real rotation by $2 \pi / d$, by the usual cosine and sine formulas, and $\phi$ is a reflection.

\subsection{Local formulas at $\Ind r$} \label{sect_series}

We continue with the Fermat hyperbola, defined as 
$$C = \{(X_0 - iX_1)^d + (X_0 + iX_1)^d = X_2^d\}.$$

To guess the blowup that regularizes $r$ around a point $p \in \Ind r$, we start by finding local formulas in Laurent series for $r$ and $s$ around $p$. Because of the existence of automorphisms acting transitively on $\Ind r$, the formulas are independent of the choice of $p$. Once we have these formulas, we can forget the geometric origin of $r$ and $s$ until we return to global computations in Section \ref{sect_dd}.

Let
$$p_0 \colonequals \left( \left[ \frac{1}{2}:-\frac{i}{2}:1\right] , [ 1:i:0 ] \right) \in \Ind r.$$

We work with formal neighborhoods, as outlined in Section \ref{sect_formal}. Let $(C \times D, p_0)$ denote the formal neighborhood of $C \times D$ about $p_0$. We coordinatize $(C \times D, p_0)$ by applying the transformation $J$, see \eqref{eq_J}, then translating $J(\left[ \frac{1}{2}:-\frac{i}{2}:1\right]) = [0: 1 : 1]$ to the origin $[0:0:1]$. Explicitly, define
$$J^1 : \PP^2 \to \PP^2,$$
$$[X_0 : X_1 : X_2] \mapsto [X_0 - iX_1: X_0 + i X_1 - X_2: X_2].$$
By Lemma \ref{lemma_COC}, there is a conjugacy from $b_{C, D}$ to $b_{J^1(C), J_*^1(D)}$.

\rev{Recall that we defined affine coordinates $x_0,x_1,q_0,q_1$ in \eqref{eq_aff_coords}.} Let 
$$ z_0 \colonequals x_0 - ix_1, \quad z_1 \colonequals x_0 + ix_1 - 1, $$
$$y_0 \colonequals q_0 - i q_1 , \quad y_1 \colonequals q_0 + iq_1.$$
In the coordinates $(z_0, z_1, y_0, y_1)$, the affine equation for $J^1(C)$ is
\begin{equation}\label{eq_C}
    z_0^d + (z_1+1)^d = 1,
\end{equation}
and the affine equation for $J^1_*(D)$ is
\begin{equation}\label{eq_D}
    y_0y_1 = 1.
\end{equation}

\rev{We will work in local coordinates
    \begin{equation} \label{eq_yz}
        y \colonequals y_1,\quad z \colonequals z_0.
    \end{equation}
The new notation is just to avoid writing subscripts throughout the paper. Geometrically, the function $z$ is a local parameter on $J(C)$ near the chosen point of isotropic tangent slope, and the function $y$ is a square root of the slope of the tangent vector in $J^*(D)$, by \eqref{eq_D}.}

\begin{lemma} \label{lemma_yz}
    The functions \rev{$y,z$} define a formal isomorphism from the germ $(C \times D, p_0)$ to $(\A^2, 0)$, the formal scheme with structure sheaf $\Spec \bk[[y,z]]$.
\end{lemma}
\begin{proof}
The image of $p_0$ in $J^1(C) \times J^1_*(D)$ is $p_1 \colonequals ([0:0:1],[1:0:0])$. Our chosen formal coordinate system on $J^1(C) \times J^1_*(D)$ about $p_1$ will be $(y_1, z_0)$. To justify that these functions really do provide a local formal system of coordinates, notice that $J^1(C)$ is smooth and $z_0$ is a uniformizer at $[0:0:1]$, and that $y_1$ defines an isomorphism $J^1_*(D) \cong \PP^1$ taking $[1:0:0]$ to $0$.
\end{proof}

Our next step is to write local expressions for reflection and secant.

\begin{prop}[local formulas] \label{prop_series}
    Let $p \in \Ind r$. There are formal coordinates $y, z$ identifying the germ $(C \times D,p)$ with $(\A^2,0)$ in which the following formulas hold.
    \begin{enumerate}
        \item \label{item_prop_series_r}
        The reflection correspondence $r_{C,D}$ defines a formal rational involution
        \[r: (\A^2,0) \dashrightarrow (\A^2,0),\]
        of the form 
        \[(y,z)\mapsto \left(\frac{1}{y} \left( z^{d-1}(1+ O(z^d))\right), z\right),\]
        where $O(z^d)$ denotes a power series in $z$ that vanishes at $0$ to order at least $d$.
        \item \label{item_prop_series_s}
        The secant correspondence $s_{C,D}$ defines a $(d-1)$-to-$(d-1)$ formal regular correspondence 
        $$s: (\A^2,0)\vdash (\A^2,0)$$
        with graph defined by local equations of the form 
        \[y' = y, \]
        \[\rev{-dy^2 = z^{d-1}+z^{d-2}z' + \ldots + (z')^{d-1}+A(z,z'),}\]
        where $A\in \bk[[z,z']]$ is in the ideal $\langle z, z'\rangle^d$.
    \end{enumerate}
\end{prop}
\begin{proof}
We reduce to the case $p = p_0$ and $(y,z) = (y_1, z_0)$ as follows. Choose some simultaneous automorphism $\alpha$ of $s_{C,D}, r_{C,D}$ moving $p$ to $p_0$; this exists by Lemma \ref{lemma_aut}. Then take $(y,z)$ = $(y_1 \circ \alpha, z_0 \circ \alpha)$ as defined in Lemma \ref{lemma_yz}.

By assumption on characteristic, on $J^1(C)$, we may express $z_1$ as a power series $B(z_0) \in \bk[[z_0]]$, namely 
    \begin{equation}
        z_1 = B(z_0) \colonequals -\frac{1}{d}z_0^d + O(z_0^{d+1}), \label{eq_z_series}
    \end{equation}
    We also may express $y_0$ in the obvious way as a function of $y_1$,
    \[y_0 = \frac{1}{y_1}.\]   
        \par\enspace(1) We derive the formula for $r$ first. Since $J^1 \times J_*^1$ conjugates $r_{C,D}$ to $r_{J^1(C), J^1_*(D)}$ by Lemma \ref{lemma_COC}, we just need to express $r_{J^1(C), J^1_*(D)}$ in terms of $y,z$.  Clearly $r_{J^1(C), J^1_*(D)}$ preserves $z_0$ and $z_1$, so $r$ preserves $z$. And $r_{J^1(C), J^1_*(D)}$ exchanges the two points of $J^1_*(D)_\infty$ while fixing the two points on $J^1_*(D)$ of slope $t(z_0,z_1) = [t_0:t_1]$, whenever these four points are distinct, by definition of reflection. The fixed points are given by $y_1 = \pm \sqrt{t_0/t_1}$. The unique such involution is 
        \[(y_0, y_1, z_0, z_1) \mapsto \left(-\frac{t_0 y_1} {t_1}, -\frac{t_1}{t_0 y_1}, z_0, z_1\right).\]
        Thus, viewed as a meromorphic map on $(\A^2,0)$, the formula for $r$ is 
        \[(y,z)\mapsto \left(-\frac{t_1}{t_0 y}, z\right),\]
        where by assumption on $\charac \bk$ we have 
        \begin{align*}
            t(z_0,z_1) &=\left[  -\frac{\partial}{\partial z_1}(z_0^d +(z_1+1)^d -1) :  \frac{\partial}{\partial z_0}( z_0^d + (z_1 + 1)^d - 1) \right]&\\
            &=\left[-d (z_1+1)^{d-1}: dz_0^{d-1}\right]&\\
            &=\left[-(z_1+1)^{d-1}: z_0^{d-1}\right].&
        \end{align*}
    Now writing $z_0 = z, z_1 = B(z)$ as in \eqref{eq_z_series} gives the local formula for $r$,
    \begin{align*}
        (y,z)&\mapsto \left(\frac{z^{d-1}}{y(1+B(z))}, z\right)&\\
        &=\left(\frac{z^{d-1}(1+O(z^d))}{y}, z\right).&
    \end{align*}
    \par\enspace(2) Next, we derive the formula for $s$. Since $s_{C,D}(p_0)=\{p_0\}$ by exceptionality (Lemma \ref{lem_e} (\ref{it_e_ind_r})) and $s_{C,D}$ is self-adjoint, $s_{C,D}$ defines a $(d-1)$-to-$(d-1)$ formal regular correspondence from $(C \times D,p_0)$ to itself, so $s:(\A^2,0)\vdash (\A^2,0)$ is a formal regular $(d-1)$-to-$(d-1)$ correspondence. 

    Since $J^1 \times J_*^1$ conjugates $s_{C,D}$ to $s_{J^1(C), J^1_*(D)}$ by Lemma \ref{lemma_COC}, we just need to express $s = s_{J^1(C), J^1_*(D)}$ in terms of $y,z$.
    
    Recall that $\Gamma_s = \Gamma_{s_+} - \Gamma_1.$ The local equations for $\Gamma_{s_+}$ in coordinates $y_0,y_1,z_0,z_1$ are
    \begin{align*}
        y_0 &= y'_0,\\
        y_1 &= y'_1,\\
        0 &= \begin{vmatrix}
            z_0 & z'_0 & y_0 \\
            z_1 & z'_1 &y_1\\
            1 &1&0\\
        \end{vmatrix}\\
        &=y_0(z'_1 - z_1) - y_1(z'_0 - z_0).
    \end{align*}

    In $yz$-coordinates, by \eqref{eq_z_series}, the equations for graph $\Gamma_{s_+}$ over the locus $y \neq 0$ are
    \begin{align*}
    y&=y',\\
    y(z'-z) &= \frac{1}{y}( B(z')-B(z)).
    \end{align*}
    Since $\Gamma_{s_+}$ is a dominant correspondence, it contains no component on which $y=0$ everywhere, so regular affine equations for the graph $\Gamma_{s_+}$ are
    \begin{align*}
    y&=y',\\
    y^2(z'-z) &= B(z')-B(z).
    \end{align*}
    We factor
    \begin{align*}
    y^2(z'-z) &= B(z')-B(z) \\
    &= -\frac{1}{d}(z'-z)(z^{d-1} + z^{d-2}z' + \ldots + (z')^{d-1}+ A(z,z'))
    \end{align*}
    where $A(z,z')$ is a formal power series containing only terms of degree at least $d$.
    
    The identity graph $\Gamma_1$ is defined by $y=y'$, $z=z'$, so $\Gamma_{s_+}-\Gamma_1$ is defined by 
    \[y=y',\]
    \[y^2 = -\frac{1}{d} \left( z^{d-1} + z^{d-2}z' + \ldots + (z')^{d-1}+ A(z,z') \right).\]
\end{proof}

\subsection{Constructing the model} \label{sect_P}

Now we introduce the model $P$ of $C \times D$, which we show in Proposition \ref{prop_essentially} is an essentially stable model for $b_{C,D}$. The idea is based on the connection between blowups and balls of Puiseux series; see \cite{MR2097722}. 

It is not hard to show that we can resolve destabilizing orbits terminating at $\Ind s$ with a single blowup. At each $p \in \Ind r$, we must work harder. Specifically, since $r$ is a family of rational maps over the parameter $z$, it acts on Puiseux series $y(z)$; the formula is 
$$r(y(z)) = \frac{z^{d-1}(1+O(z^d))}{y(z)} \quad \text{(Proposition \ref{prop_series})}.$$
So if $y(z)$ is a unit Puiseux series with leading term $u \in \bk \smallsetminus \{0\}$,
$$r(y(z)) = \frac{1}{u}z^{d-1} + \text{higher-order terms}.$$
This suggests adding divisors $E_k^{(p)}$ that parametrize Puiseux series of the form $y(z) = uz^k$, where $0< k \leq d-1$. On these series, $r$ acts by
$$r(y(z)) =  \frac{1}{u}z^{d-1 - k} + \text{higher-order terms}.$$ 
So we must perform a $(d-1)$-fold iterated blowup. There are many different $(d-1)$-fold iterated blowups, and the one we need is the simplest to define, but there is still quite a bit of notation required to define it in coordinates.

First, we define the blowup on an abstract $(\A^2,0)$.
\begin{defn}[standard iterated blowup]
Let $\tilde{\pi}_1: V_{\pi_1} \to (\A^2,0)$ be the simple blowup at \rev{$(0,0)$.} There is an open set $V_{\pi_1}^\circ \subset V_{\pi_1}$ with coordinates $(u_1,v_1)$ such that 
    $$\pi_1|_{V^\circ_{\pi_1}}(u_1,v_1)= (u_1v_1,v_1).$$ Let $E_1^\circ \subset V_{\pi_1}^\circ$ be the set $\{u_1 = 0\}$. Let $E_1$ be the Zariski closure in $V_{\pi_1}$ of $E_1^\circ$. Then \rev{$E_1 = \pi_1^{-1}(0,0)$} is the exceptional divisor of $\pi_1$. Now, let $(\A^2,0)_1$ be the germ about $(0,0)$ in $E_1^\circ$ in the $(u_1,v_1)$ coordinates. Repeating the above construction, we define $V_{\pi_2}$ and $V_{\pi_2}^\circ$, 
    \[\tilde{\pi}_2: V_{\pi_2} \to (\A^2,0)_1,\]
    where $\tilde{\pi}_2|_{V_{\pi_2}^\circ}: V_{\pi_2}^\circ \to (\A^2,0)_1$ is defined by $(u_2,v_2)\mapsto (u_2v_2, v_2)$. Iterating, we obtain a sequence of point blowups $\tilde{\pi}_1,\tilde{\pi}_2,\ldots$ . Let $\pi_k:V_{\pi_k}\to (\A^2,0)$ be 
    \[\pi_k \colonequals\tilde{\pi}_1\circ \ldots \circ \tilde{\pi}_k.\]
    By convention, $\pi_0$ is the identity. Note that the formula for $\pi_k$ on ${V^\circ_{\pi_k}}$ is $\pi_k (u_k, v_k) = (u_kv_k^k, v_k).$ Its exceptional divisor is 
    \[\pi_k^{-1}(0) = \sum_{j=1}^k E_j.\]
    We call $\pi_k$ the \emph{standard $k$-fold iterated blowup}.
\end{defn}

\begin{defn}[Construction of model $P$]\label{def_P}    
    In Proposition \ref{prop_series}, for each $p \in \Ind r$, we introduced preferred formal coordinates $(y,z)$ on $(C \times D, p) \cong (\A^2, 0)$. Denote the standard $k$-fold blowup in those coordinates by $\pi^{(p)}_k: V_{\pi^{(p)}_k} \to (C \times D, p)$. 

    Then $\pi: P\to C\times D$ is defined as the fiber product of $\pi^{(p)}_{d-1}$, as $p$ ranges over $\Ind r$, together with a simple blowup at each point of $\Ind s$. In other words, \rev{we blow up each indeterminacy point $r$ as described above and blow up each indeterminacy point of $s$ just once.}

Let $\mc{E}$ be the exceptional divisor of $\pi$. If $p\in \Ind s\cup \Ind r$, let $\mc{E}^{(p)}\colonequals \pi^{-1}(p)$. For each $p\in \Ind s$, let $E^{(p)}\colonequals \pi^{-1}(p)$. So 
\begin{align*}
    \mc{E} &= \sum_{p \,\in \,\Ind s \, \cup \, \Ind r} \mc{E}^{(p)} \\
    &= \sum_{p \, \in \, \Ind r}\sum_{k=1}^{d-1} E_k^{(p)} + \sum_{p \, \in \, \Ind s} E^{(p)}.
\end{align*}
Note that $\mc{E}$ has $2d^2$ components (see Figure \ref{fig_P}).

Define
\[  \hat{b},  \hat{r},  \hat{s} : P \vdash P, \]
\[ \hat{b} = \pi^{-1} \circ b \circ {\pi}, \quad \hat{r} = \pi^{-1} \circ r \circ {\pi}, \quad \hat{s} = \pi^{-1} \circ s \circ {\pi}.\]

\begin{figure}[h]
    \centering
    \includegraphics[scale=0.7]{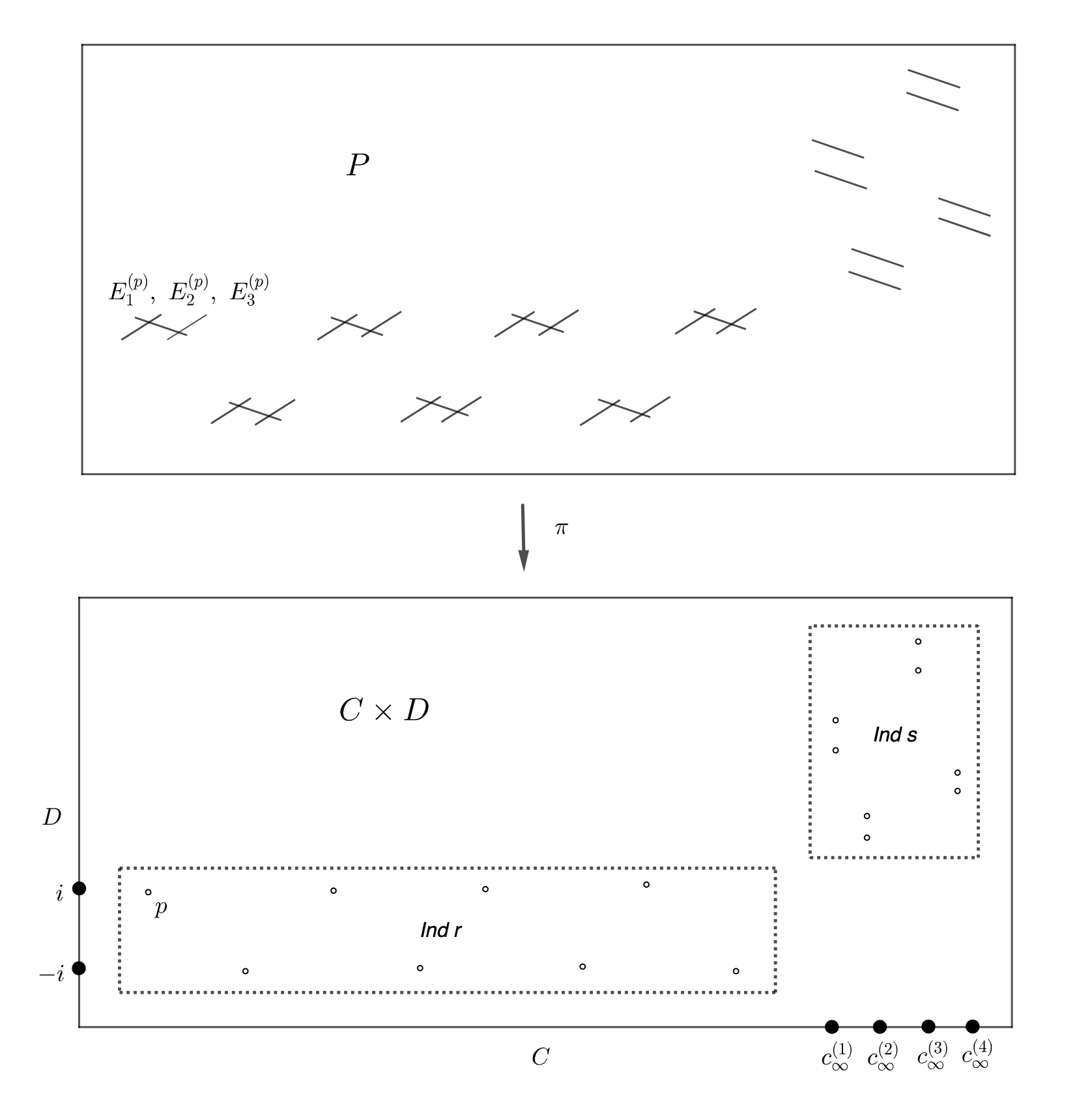}
    \caption{Model $P$, $d = 4$.}
    \label{fig_P}
\end{figure}

\begin{remark}
In \cite{billiardsI}, we introduced a modified phase space, i.e. a birational model, on which the degree growth of billiards in curves is easier to control. The modified phase space of the generic curve is the simple blowup of $C_{\gen}\times D$ at $\Ind s_{\gen} \cup \Ind r_{\gen}$. It is conceptually convenient to think of $P$, which is specific to the Fermat hyperbola, as a degeneration of $P_{\gen}$ in which some blowup centers have come together.
\end{remark}

\section{Reflection on model $P$} \label{sect_r}

Recall that $\pi: P \to C \times D$ is the model defined in Definition \ref{def_P}.
We show that the conjugate $\hat{r}: P \dashrightarrow P$ of $r$ is biregular.

For each $p = (x,v_\infty) \in \Ind r$, we defined $D^{(p)} = x \times D$.
By Lemma \ref{lem_r_basic}, the contractions of $r: C\times D \dashrightarrow C\times D$ are the curves 
\[D^{(p)} \times p.\]

Let $\hat{D}^{(p)} = (\pi^{-1})_{\str} (D^{(p)})$. To show that $\hat{r}$ has no contractions, we must check that the $\hat{D}^{(p)}$ and exceptional divisors $\mc{E}$ not are $\hat{r}$-contracted. 

The behavior of the exceptional primes above $\Ind s$ is the same as the general case of algebraic billiards.

\begin{lemma}[\protect{\cite[Lemma~6.5]{billiardsI}}]\label{lem_r_at_infty}
    Let $p \in \Ind s$, and let $E^{(p)}$ be the exceptional prime divisor above $p$. Then 
    \[\Ind \hat{r} \cap E^{(p)} = \varnothing,\]
    \[\hat{r}(E^{(p)}) = E^{(p)},\]
    \[\hat{r}_* E^{(p)} = E^{(p)}.\]
\end{lemma}

\begin{proof}
    Let $p = (x_\infty, v)$, where $x_\infty \in C_\infty$ and $[v] = t(x_{\infty})$. By Lemma \ref{lem_fh_geo} (\ref{it_non_isot}), $t(x_\infty)$ is non-isotropic. So $r_{x_\infty}(v) = v$, implying $r(p) = p$. The divisor $E^{(p)}$ is naturally the projectivized tangent space of $C \times D$ at $p$. Since $r(p) = p$ and $r$ is an involution, the differential of $r$ at $p$ is an involution, so $\hat{r}$ is regular on $E^{(p)}$ and defines an involution on $E^{(p)}$.
\end{proof}

It remains to check that the $\hat{D}^{(p)}$ and the $E^{(p)}_k$ above $p \in \Ind r$ are not $\hat{r}$-contracted curves.

\begin{lemma}\label{lem_r_A}
    Let $p \in \Ind r$, and let $E^{(p)}_0 \colonequals \hat{D}^{(p)}$. Then, for all $0 \leq k \leq d - 1$, we have
    \[\hat{r}_{\str}(E_k^{(p)})= E_{d-1-k}^{(p)}.\] 

\end{lemma}

\begin{proof}
    Fix $p$ and $k$. We claim that 
    \[(\pi_{d-1-k}^{-1}\circ r\circ \pi_k)_{\str}(E_k) = E_{d-1-k}.\]
    In coordinates $u = u_k, v = v_k$ on the source and $yz$-coordinates  (Proposition \ref{prop_series}) on the target (denoted here with primes $y', z'$ to clarify that these coordinates are on the target), the total composite graph of $r$ and $\pi_k$ has local formal equations 
    \begin{align*}
        v &=z',&\\
        y'v^ku &= (z')^{d-1} + O((z')^d)&\\
        &= v^{d-1} + O(v^d).&
    \end{align*}
    So $\Gamma_{r\circ \pi_k}$ has local formal equations 
    \begin{align*}
        v &=z',&\\
        y'u &= v^{d-1-k} + O(v^{d-k}).&
    \end{align*}
    So in coordinates $(u, v) = (u_k, v_k)$ on the source and $(u',v') = (u'_{d-1-k}, v'_{d-1-k})$ on the target, the total composite graph of $\pi_{d-1-k}^{-1}$ and $r\circ\pi_k$ has local equations 
    \begin{align*}
        v &=v',&\\
        u'(v')^{d-1-k}u &=v^{d-1-k}+O(v^{d-k})&\\
        &=(v')^{d-1-k} +O((v')^{d-k}).&
    \end{align*}
    So $\Gamma_{\pi_{d-1-k}^{-1}\circ r \circ \pi_k}$ maps $\{v=0\}$ to $\{v'=0\}$ via $u \mapsto 1/u$. 
    \rev{In other words, the following dominant restriction exists:
    \begin{equation} \label{eq_micro_r_preview}
        \hat{r}|_{E_k \vdash E_{d-1-k}} (u) = 1/u.
    \end{equation}
    }
\end{proof}

\begin{prop}\label{prop_r_reg}
    The birational map $\hat{r} : P \dashrightarrow P$ is in fact a biregular map $\hat{r}: P \to P$.
\end{prop}
\begin{proof}
     Since $\hat{r}=\hat{r}^{-1}$, we have $\Ind \hat{r}=\Exc \hat{r}$. Combining Lemma \ref{lem_r_at_infty} and Lemma \ref{lem_r_A}, we see that the strict transform of a curve by $\hat{r}$ is always $1$-dimensional. Since $\hat{r}$ is a $1$-to-$1$ rational correspondence, by Lemma \ref{lem_c_test} there are no $\hat{r}$-contracted curves, so $\Exc \hat{r}=\varnothing$.
\end{proof}

\section{Secant on model $P$} \label{sect_s}
In this section, we compute the images by $\hat{s}:P\vdash P$ of certain exceptional curves in $P$. The material in this section is limited to what is needed to compute the dynamical degree of $b_{C,D}$. More details on the behavior of $\hat{s}$ are given in Section \ref{sect_AS}, where we compute $\Ind \hat{s}$.

\rev{By Lemma \ref{lem_s_basic}, the contracted curves of $s: C\times D \vdash C\times D$ are the curves of the form 
$$C^{(p)} \colonequals C\times v,$$
where $p = (x_\infty, v) \in \Ind s$. The contractions are $C^{(p)} \times p \subset \Gamma_s$.}

\rev{Let $\hat{C}^{(p)}$ denote the strict transform of $C^{(p)}$ by the iterated blowup $\pi$. It is an irreducible curve in $P$. } The possible contracted curves of $\hat{s}$ are the $\hat{C}^{(p)}$ and the divisors in $\mc{E}$.

The behavior of the $\hat{C}^{(p)}$ and $E^{(p)}$ where $p \in \Ind s$ is the same as the generic case of algebraic billiards. We summarize the relevant facts in the next lemma.

\begin{lemma} \label{lem_s_infty}
    Let $p\in \Ind s$, and let $E^{(p)} \subset P$ be the exceptional prime \rev{divisor} above $p$. Then $\hat{C}^{(p)}$ and $E^{(p)}$ are not $\hat{s}$-contracted curves, and 
    \[\hat{s}_* E^{(p)} = \hat{s}_{\str}(E^{(p)}) = \hat{C}^{(p)}.\]
\end{lemma}

\begin{proof}
    By Lemma \ref{lem_s_basic}, we have $p = (x_\infty, v)$, where $x_\infty \in C_\infty$.  If $x \in C$ is general, the line $\ell(x, v)$ meets $C$ at $x$, $x_\infty$, and $d - 2$ other points, so $\hat{s}|_{\hat{C}^{(p)} \vdash \hat{C}^{(p)}}$ is $(d-2)$-to-$(d-2)$.  We showed that $C_\infty$ contains $d$ points, none with isotropic tangent slope (Lemma \ref{lem_fh_geo}). Then by \cite[Lemma 6.2]{billiardsI}, there is a natural identification of $E^{(p)}$ with the pencil of lines in $X$ through $x_\infty$, under which $\hat{s}|_{\hat{C}^{(p)} \vdash E^{(p)}}$ takes a general point $(x,v) \in C^{(p)}$ to the line through $x$ and $x_\infty$. It follows that $\hat{s}|_{\hat{C}^{(p)} \vdash E^{(p)}}$ is $(d-1)$-to-$1$.
    So, by the contracted curve test (Lemma \ref{lem_c_test}), the curve $\hat{C}^{(p)}$ is not $\hat{s}$-contracted.
    
    By symmetry, $\hat{s}|_{E^{(p)} \vdash \hat{C}^{(p)}}$ is $1$-to-$(d-1)$. This accounts for all images of points in $E^{(p)}$, and Lemma \ref{lem_c_test} shows that $E^{(p)}$ is not $\hat{s}$-contracted. Since $\Ind \hat{s} \cap \hat{C}^{(p)} = \varnothing$ \cite[Lemma 6.2]{billiardsI}, we obtain $\hat{s}_* E^{(p)} = \hat{s}_{\str}(E^{(p)})$.
\end{proof}

It follows that $s$ lifts to a biregular correspondence on the blowup of $C\times D$ at every point of $\Ind s$. The key question is how $s$ behaves above $\Ind r$. It turns out that the ``second half'' of the exceptional divisors above $\Ind r$ are $\hat{s}$-invariant.

\begin{lemma}\label{lem_s_lower}
    Let $p\in \Ind r$, and let $k$ be an integer in the range \rev{$\frac{d-1}{2} \leq k \leq d-1$.} Then 
    \[\hat{s}_{\str}(E^{(p)}_k) = (d-1)E^{(p)}_k.\]
    Further, the curve $E^{(p)}_k$ is not a $\hat{s}$-contracted curve.
\end{lemma}
\begin{proof}
    Fix $p$, and fix an integer $k$ in the range \rev{$\frac{d-1}{2} \leq k \leq d - 1$.} Let $E_k = E_k^{(p)}$. It suffices to show that $\deg_1(\hat{s}|_{E_k \vdash E_k}) = d-1$, since then by counting images there can be no more curves in $\hat{s}_{\str}(E_k)$. We claim that 
    \[(\pi_k^{-1} \circ s \circ \pi_k)_{\str}(E_k) = (d-1)E_k.\]
    In coordinates $(u,v) = (u_k, v_k)$ on the source and $y'z'$-coordinates on the target, the total composite graph of $s$ and $\pi_k$ has equations 
    \begin{align*}
        y' & = uv^k,&\\
        -d(uv^k)^2 &= v^{d-1}+v^{d-2}z' + \ldots + (z')^{d-1} + A(v,z'),&
    \end{align*}
    by Proposition \ref{prop_series} (\ref{item_prop_series_s}). The total composite graph has no non-dominant components, since these would lie above $\{v=0\}$ or $\{u=0\}$. So $\Gamma_{s\circ \pi_k}$ has the same equations. In coordinates $(u,v) = (u_k,v_k)$ on the source and $(u',v') = (u'_k, v'_k)$ on the target, the total composite graph of $\pi_k^{-1}$ and $s\circ \pi_k$ is then 
    \begin{align*}
        u'(v')^k&=uv^k,&\\
        -d(uv^k)^2 &= v^{d-1} + v^{d-2}v' + \ldots + (v')^{d-1} + A(v,v').&
    \end{align*}
    \rev{We proceed by calculating generic solutions $u', v'$ in terms of $u, v$, and then setting $v = 0$ to get the restriction to the exceptional divisor $E_k$. Let $\alpha = v'/v$. Since $A(v,v')\in \langle v,v'\rangle^d$, in coordinates $u,u',\alpha, v$, the graph is given by
        \begin{align*}
        u'v^k\alpha^{k}&= uv^{k},&\\
        -du^2v^{2k} & = v^{d-1}(\alpha^{d-1} + \ldots + 1 + O(v)).&
    \end{align*}
    Since $\frac{d-1}{2} \leq k$, we may cancel powers of $v$ to get the equations
    \begin{align*}
        u &= u'\alpha^{k},& \\
        -du^2 v^{2k - (d-1)} &= \alpha^{d-1} + \ldots + 1 + O(v).& 
    \end{align*}
    So given general values of $u, v$, for each of the $d-1$ choices of $\alpha$ with multiplicity, there is a value of $(u', v')$ corresponding to $(u,v)$. Setting $v = 0$, we get the following explicit formula for the dominant restriction,
    \begin{align}\label{eq_micro_s_preview}
        \hat{s}|_{E_k \vdash E_k} (u) = \left\{\frac{u}{\alpha^k} : \; \alpha\in \bk, \; \alpha^{d-1}+\ldots + 1 + w_{k} du^2 = 0\right\},
    \end{align}
    where
    \[
    w_{k} = \begin{cases} 1, & k = \frac{d-1}{2},\\ 0, & \textrm{otherwise}.
    \end{cases}
    \]
    }
    
    \rev{Finally, we note that by the contracted curve test (Lemma \ref{lem_c_test}), $E_k$ is not $\hat{s}$-contracted.}
\end{proof}

Now we can prove Theorem \ref{thm_main_model} (\ref{it_thm_d_3}).
\begin{cor} \label{cor_d_3_reg}
    If $d=3$, the correspondence $\hat{b}: P\vdash P$ is biregular. 
\end{cor}

\begin{proof}
    The correspondence $\hat{r}:P\to P$ is biregular (Proposition \ref{prop_r_reg}), so we just need to check that $\hat{s}$ is biregular. Recall that the possible contracted curves of $\hat{s}$ are the $\hat{C}^{(p)}$ and the divisors in $\mc{E}$. By Lemma \ref{lem_s_infty} and Lemma \ref{lem_s_lower} with $d = 3$, there are no contracted curves of $\hat{s}$, and $\hat{s}$ is self-adjoint. 
\end{proof}

If $d=2$, $P$ coincides with the modified phase space used to study general algebraic billiards, so $P$ is again a biregular model by \cite[Corollary 7.11]{billiardsI}.

\section{The dynamical degree} \label{sect_dd}

Usually, one calculates the dynamical degree of a correspondence by constructing an algebraically stable model. The issue in our case is that the construction of such a model depends in a delicate way on characteristic. Instead, we show that the model $P$ is essentially stable, Definition \ref{def_es}. That is, there exists a big-and-nef divisor $\Delta$ on $P$ such that, for all $n \in \N$,
    \begin{equation}
    \left(f^n\right)_*\Delta\cdot \Delta  = (f_*)^n \Delta \cdot \Delta. 
    \end{equation}
The idea for the proof is as follows. The lack of algebraic stability of a correspondence $f$ is measured by $(f^n)_* - (f_*)^n$, whose image is supported on $f$-exceptional image curves and their strict iterates. In the case of $\hat{b}$, these curves are all contained in a subset of $\mc{E}$. Then any big-and-nef divisor $\Delta$ orthogonal to every divisor in $\mc{E}$ suffices. Note that no ample divisor is orthogonal to $\mc{E}$, so we need to work with big-and-nef divisors.

Let $W_1\subset \Div P$ be the subgroup generated by $\{E^{(p)}_k : 1\leq k \leq d-2, \; p\in \Ind r\}$. 
Let $N_1$ be the image of $W_1$ in $\Num P$. 

\begin{lemma} \label{lem_W1}
\leavevmode
\begin{enumerate}
\item \label{it_W1_exc} All exceptional image curves of $\hat{s}, \hat{r}, \hat{b}$ lie in $W_1$.
\item \label{it_W1_str} We have
$$\hat{s}_{\str}(W_1) \subseteq W_1, \quad \hat{r}_{\str}(W_1) \subseteq W_1, \quad \hat{b}_{\str}(W_1) \subseteq W_1.$$
\item \label{it_W1_push} We have
$$\hat{s}_*(W_1) \subseteq W_1, \quad \hat{r}_*(W_1) \subseteq W_1, \quad \hat{b}_*(W_1) \subset W_1.$$
\item \label{it_N1} We have
$$\hat{s}_*(N_1) \subseteq N_1, \quad \hat{r}_*(N_1) \subseteq N_1, \quad \hat{b}_*(N_1) \subset N_1.$$
\end{enumerate}
\end{lemma}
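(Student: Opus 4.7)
The plan is to verify the four claims in sequence, with claim (4) an immediate consequence of (3). The main tools are the explicit descriptions of contracted and exceptional image curves from Sections \ref{sect_r} and \ref{sect_s}, together with biregularity of $\hat{r}$ from Proposition \ref{prop_r_reg} and the Diller--Favre composition criterion (Lemma \ref{lem_df}).

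For (\ref{it_W1_exc}), I would classify exceptional image curves for each of $\hat{s}$, $\hat{r}$, $\hat{b}$. Since $\hat{r}$ is biregular, it has none. For $\hat{s}$, Lemmas \ref{lem_ind_s} (odd $d$) and \ref{lem_ind_s_even} (even $d$) identify its exceptional image curves as exactly $E^{(p)}_k$ with $1\leq k<(d-1)/2$, which are generators of $W_1$. For $\hat{b}=\hat{r}\circ\hat{s}$, the only way to produce an exceptional image curve is from $\hat{r}$ carrying the blown-up image of an indeterminacy point of $\hat{s}$ (since $\Ind\hat{r}=\varnothing$); Lemma \ref{lem_r_B} sends each such $E^{(p)}_k$ to $E^{(p)}_{d-1-k}$, still in $W_1$.

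For (\ref{it_W1_str}), I would compute $\hat{s}_{\str}$ on each generator $E^{(p)}_k$ of $W_1$. Lemma \ref{lem_s_lower} gives $\hat{s}_{\str}E^{(p)}_k=(d-1)E^{(p)}_k$ whenever $(d-1)/2<k\leq d-2$. For $1\leq k<(d-1)/2$, the descending induction inside the proof of Lemma \ref{lem_ind_s}/\ref{lem_ind_s_even} shows that $\hat{s}(E^{(p)}_k)$ is $0$-dimensional, so $\hat{s}_{\str}E^{(p)}_k=0$. In the odd case the remaining generator is $E_{\Mid}$, and Lemma \ref{lem_micro_s} makes $\sigma$ a $(d-1)$-to-$(d-1)$ self-correspondence of $E_{\Mid}$, giving $\hat{s}_{\str}E_{\Mid}=(d-1)E_{\Mid}$. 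Lemma \ref{lem_r_B} handles $\hat{r}_{\str}$ by permuting generators via $k\mapsto d-1-k$. For $\hat{b}_{\str}$, biregularity of $\hat{r}$ forces $\hat{b}_{\str}=\hat{r}_{*}\circ\hat{s}_{\str}$, which preserves $W_1$.

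For (\ref{it_W1_push}), the divisor $\hat{s}_{*}\Delta$ differs from $\hat{s}_{\str}\Delta$ by a nonnegative combination of $\hat{s}$-exceptional image curves, accounting for how $\Delta$ meets $\Ind\hat{s}$. By (\ref{it_W1_exc}) those lie in $W_1$, so (\ref{it_W1_str}) gives $\hat{s}_{*}(W_1)\subseteq W_1$. Biregularity of $\hat{r}$ makes $\hat{r}_{*}=\hat{r}_{\str}$, hence $\hat{r}_{*}(W_1)\subseteq W_1$. To pass to $\hat{b}_{*}$, apply Lemma \ref{lem_df} with the hypothesis vacuously satisfied (since $\Ind\hat{r}=\varnothing$) to obtain $\hat{b}_{*}=\hat{r}_{*}\circ\hat{s}_{*}$, and conclude $\hat{b}_{*}(W_1)\subseteq W_1$. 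Claim (\ref{it_N1}) then follows because $\Div P\to\Num P$ intertwines pushforward.

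The main obstacle is bookkeeping rather than substance: one must correctly enumerate, for each generator $E^{(p)}_k$, all curves appearing in $\hat{s}_{*}E^{(p)}_k$ (both from the strict transform and from $E^{(p)}_k$ meeting $\Ind\hat{s}$). This bookkeeping is already set up by the precise description of $\Ind\hat{s}$ and its image in Lemmas \ref{lem_ind_s} and \ref{lem_ind_s_even}, so the remaining work is purely verification.
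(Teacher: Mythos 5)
Your proposal is correct and follows essentially the same route as the paper: classify the exceptional image curves via Lemmas \ref{lem_ind_s}/\ref{lem_ind_s_even}, use Lemma \ref{lem_s_lower} (together with Lemma \ref{lem_micro_s} for $E_{\Mid}$ in the odd case) and Lemma \ref{lem_r_B} for strict transforms, pass from strict transform to pushforward by absorbing exceptional image curves into $W_1$, and descend to $\Num P$. Your treatment of the midpoint divisor $E_{\Mid}$, which lies in the gap between the index ranges of Lemmas \ref{lem_s_lower} and \ref{lem_ind_s}, correctly fills a detail the paper's terse citation leaves implicit.
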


\begin{proof}
    Claim (1) about $\hat{s}$ is by Lemma \ref{lem_s_lower}. Claim (1) about $\hat{r}$ is vacuous (there are no such curves), by Proposition \ref{prop_r_reg}.
    
    To see Claim (2) about $\hat{s}$, since each point of $\Ind r$ is $s$-exceptional (Lemma \ref{lem_e}), the strict transform by $\hat{s}$ of any curve in $W_1$ lies in $\pi^{-1}(\Ind r)$, but these images cannot land in any $E_{d-1}^{(p)}$, $p \in \Ind r$, by Lemma \ref{lem_s_lower}.

    Claim (2) about $\hat{r}$ is by Lemma \ref{lem_r_A}.

    Claims (1) and (2) about $\hat{b}$ follow from those for $\hat{s}$ and $\hat{r}$.

    Claim (3) follows from (1) and (2) because pushforward agrees with strict transform modulo exceptional image curves \rev{by Lemma \ref{lem_diff_support}}. 

    Claim (4) follows from (3) because pushforward respects the class map. 
\end{proof}

\begin{lemma}\label{lem_essentially_pre}
    For any divisor $\Delta\in \Div P$, and any $n\in \N$, we have 
    \[(\hat{b}^n)_* \Delta \equiv (\hat{b}_*)^n\Delta \mod W_1.\]
\end{lemma}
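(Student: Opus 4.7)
My plan is to induct on $n$, using a telescoping decomposition together with the three closure properties collected in Lemma \ref{lem_W1}. The base case $n=1$ is immediate since $(\hat b)_* = \hat b_*$ by definition. For the inductive step, I would write
\[
(\hat b^{n+1})_* \Delta - (\hat b_*)^{n+1}\Delta = \bigl[(\hat b^{n+1})_*\Delta - \hat b_*(\hat b^n)_*\Delta\bigr] + \hat b_*\bigl[(\hat b^n)_*\Delta - (\hat b_*)^n\Delta\bigr],
\]
and handle each bracket separately.

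The second bracket is the easy one: the inductive hypothesis gives $(\hat b^n)_*\Delta - (\hat b_*)^n\Delta\in W_1$, and Lemma \ref{lem_W1}(3) says that $\hat b_*$ preserves $W_1$, so the whole bracket lies in $W_1$.

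The first bracket measures the failure of pushforward to commute with composition for the pair $(\hat b^n,\hat b)$. Lemma \ref{lem_df} identifies exactly when this failure vanishes, namely when no contraction of the inner map has image in $\Ind$ of the outer map. What I really want is the strengthening that the general correction is always a divisor supported on the exceptional image curves of the outer map $\hat b$. This should come from the same analysis of the non-dominant components of the total composite graph $\Gamma_{\hat b^n}\times_P \Gamma_{\hat b}$ that appears in Definition \ref{def_composite}: each discarded non-dominant component, after projection to $P\times P$, is supported on a curve of the form $\hat b(\pi_2(V))$ for some contraction $V$ of $\hat b^n$, and such a curve is by definition an exceptional image curve of $\hat b$. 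All exceptional image curves of $\hat b$ lie in $W_1$ by Lemma \ref{lem_W1}(1), so the first bracket also lies in $W_1$.

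The main obstacle I anticipate is making precise this enhancement of Lemma \ref{lem_df}, i.e., explicitly bookkeeping the divisor correction coming from the non-dominant part of $\Gamma_{\hat b^n}\times_P \Gamma_{\hat b}$ rather than merely recording its vanishing under the Diller--Favre hypothesis. Once that is in place, combining the two brackets yields $(\hat b^{n+1})_*\Delta - (\hat b_*)^{n+1}\Delta\in W_1$, and the induction closes.
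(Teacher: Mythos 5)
Your proof takes essentially the same approach as the paper's: induction with the identical telescoping decomposition, where the functoriality-failure bracket is controlled by Lemma~\ref{lem_W1}(\ref{it_W1_exc}) and the inductive-hypothesis bracket by Lemma~\ref{lem_W1}(\ref{it_W1_push}). The enhancement of Lemma~\ref{lem_df} you flag as the main obstacle---that $(\hat b^{n+1})_*\Delta - \hat b_*(\hat b^n)_*\Delta$ is a divisor supported on exceptional image curves of $\hat b$---is exactly the step left implicit in the paper's three-line argument as well, so your proposal is no less complete than the paper's own proof.
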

\begin{proof}
    We argue by induction. The case $n=1$ is trivial. Assuming the claim for $n$, we have
    \begin{align*}
        (\hat{b}^{n+1})_* \Delta &\equiv\hat{b}_* (\hat{b}^n)_* \Delta &  \text{(Lemma \ref{lem_W1} (\ref{it_W1_exc})}&\\
        &\equiv \hat{b}_*(\hat{b}_*)^n\Delta & \text{(Lemma \ref{lem_W1} (\ref{it_W1_push}))}&\\
        &\equiv (\hat{b}_*)^{n+1}\Delta.&
    \end{align*}
\end{proof}

Let $C_0$ and $D_0$ be divisors of the form $C\times q_0$ and $c_0 \times D$, where $c_0 \in C$, $q_0 \in D$ are general. Let $\hat{C}_0 = \pi^* C_0, \hat{D}_0 = \pi^* D_0$. Let $\Delta\colonequals \hat{C}_0 + \hat{D}_0$. 

\begin{prop}[essential stability] \label{prop_essentially}
The correspondence $\hat{b}:P \vdash P$ is essentially stable.
    It follows that
        $$\lambda_1(b) = \lambda_1(\hat{b}) = \rad\hat{b}_*.$$
\end{prop}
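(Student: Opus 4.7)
The plan is to exploit two things: first, that $\Delta = \hat{C}_0 + \hat{D}_0$ is big and nef, being the $\pi$-pullback of an ample divisor on $C \times D$; second, that $\Delta$ is \emph{numerically orthogonal} to all of $W_1$. With these two properties, essential stability follows immediately from Lemma \ref{lem_essentially_pre}, and the final formula follows from Lemma \ref{lem_AAS} combined with birational invariance of the dynamical degree (Theorem \ref{thm_dd_props}(\ref{it_birat_invariant})).

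The first step is to check that $\Delta$ is big and nef. The divisor $C_0 + D_0$ on $C \times D$ is ample, since $C_0$ and $D_0$ are the classes of fibers of the two projections $C \times D \to D$ and $C \times D \to C$, whose sum gives a Segre-type polarization. Since $\pi : P \to C \times D$ is a birational morphism, the pullback $\Delta = \pi^* (C_0 + D_0)$ is big and nef on $P$ (pullback of an ample divisor under a birational morphism is always big and nef).

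The second, central step is to show $\Delta \cdot E = 0$ for every generator $E = E_k^{(p)}$ of $W_1$, where $1 \le k \le d-2$ and $p \in \Ind r$. Each such $E$ is exceptional for $\pi$, i.e.\ $\pi_* E = 0$. By the projection formula,
\[
\Delta \cdot E \;=\; \pi^*(C_0 + D_0) \cdot E \;=\; (C_0 + D_0) \cdot \pi_* E \;=\; 0.
\]
Hence $\Delta$ is orthogonal to every element of $W_1$ in the intersection pairing on $\Num P$.

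With these two facts in hand, the essential stability is a one-line computation. By Lemma \ref{lem_essentially_pre}, for every $n \in \N$ we have $(\hat{b}^n)_* \Delta - (\hat{b}_*)^n \Delta \in W_1$, so intersecting with $\Delta$ and using orthogonality,
\[
(\hat{b}^n)_* \Delta \cdot \Delta \;=\; (\hat{b}_*)^n \Delta \cdot \Delta.
\]
This is exactly essential stability witnessed by $\Delta$. Applying Lemma \ref{lem_AAS} gives $\lambda_1(\hat{b}) = \rad \hat{b}_*$, and Theorem \ref{thm_dd_props}(\ref{it_birat_invariant}) gives $\lambda_1(b) = \lambda_1(\hat{b})$ since $\pi$ is a birational conjugacy. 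There is really no obstacle here: the conceptual work has already been done in setting up $W_1$ and proving Lemma \ref{lem_essentially_pre}; the only remaining ingredient is the standard orthogonality of pulled-back classes to $\pi$-exceptional divisors, which is the cleanest way to sidestep the genuine failure of algebraic stability in bad characteristic.
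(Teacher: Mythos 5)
Your proposal is correct and takes essentially the same approach as the paper: both proofs reduce to showing $\Delta = \hat{C}_0 + \hat{D}_0$ is big and nef and intersection-orthogonal to $W_1$, then invoke Lemma \ref{lem_essentially_pre}, Lemma \ref{lem_AAS}, and birational invariance. The only difference is cosmetic: where the paper verifies bigness and nefness of $\Delta$ by hand (checking $\pi_*\Delta$ against the basis $C_0, D_0$ and computing $\Delta\cdot\Delta = 2$), you instead cite the standard fact that the pullback of an ample divisor under a birational morphism is big and nef, and you verify orthogonality via the projection formula rather than by noting $\Delta$ is disjoint from the $\pi$-exceptional locus.
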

\begin{proof}
    The divisor $\pi_* \Delta$ is nef since it intersects $C_0$ and $D_0$ with multiplicity $1$ each, and these are a basis of $\Num C \times D$. So the pullback $\Delta = \pi^*\pi_* \Delta$ is nef. To show that $\Delta$ is also big, we need $\Delta \cdot \Delta > 0$. Since $\Delta$ is disjoint from the contracted curves of $\pi$, we have
    \[\Delta \cdot \Delta = \pi_*\Delta \cap \pi_*\Delta = 2 > 0.\] 
    
    By Lemma \ref{lem_essentially_pre}, for each $n \in \N$, we have 
    \[\left(\hat{b}^n\right)_* \Delta \in (\hat{b}_*)^n \Delta + W_1.\]
    Since $\Delta$ is intersection-orthogonal to $W_1$, intersecting against $\Delta$ proves that $\hat{b}$ is essentially stable.

    Dynamical degrees are birational conjugacy invariants (Theorem \ref{thm_dd_props} (\ref{it_birat_invariant})), so $\lambda_1(b)=\lambda_1(\hat{b})$. The remaining equality $\lambda_1(\hat{b}) = \rad\hat{b}_*$ is Lemma \ref{lem_AAS}.
\end{proof}

\rev{This completes the proof of Theorem \ref{thm_main_model} (\ref{it_thm_es}). To calculate the dynamical degree of the Fermat hyperbola billiard, it only remains to compute the spectral radius of $\hat{b}_*$.} Writing down a matrix for $\hat{b}_*$ would be a lot of work, since $\Num P$ has rank $2 + 2d^2$. Fortunately, there are some tricks that reduce the dimension of the computation. (Notice that the value we ultimately obtain in Theorem \ref{thm_main_dd} is at worst an algebraic integer of degree $2$ rather than $2 + 2d^2$.)

By Lemma \ref{lem_W1} (\ref{it_N1}), the pushforwards $\hat{s}_*, \hat{r}_*, \hat{b}_*$ descend to endomorphisms of $(\Num P)/ N_1$. Rather than study the whole space $(\Num P)/ N_1$, which has rank $2 + 4d$, it is more convenient to restrict our attention to just the divisors that show up when iterating $\hat{C}_0$ and $\hat{D}_0$. We set this up as follows.

Let $N$ be the smallest subspace of $\Num P$ that is closed under $\hat{s}_*$ and $\hat{r}_*$ and contains $\hat{C}_0$, $\hat{D}_0$, and $N_1$. Let $N_0 \colonequals N/N_1$. The pushforward homomorphisms $\hat{b}_*$, $\hat{r}_*$ $\hat{s}_*: \Num P \to \Num P$ map $N$ into $N$ and $N_1$ into $N_1$. Hence there are endomorphisms $\overline{b}$, $\overline{r}$ $\overline{s}: N_0 \to N_0$ induced by $\hat{b}_*|_N$, $\hat{r}_*|_N$, and $\hat{s}_*|_N$. Note that since $\hat{r}$ is regular, by Lemma \ref{lem_df}, we have $\hat{b}_* = \hat{r}_* \hat{s}_*$, so $\ovl{b} = \ovl{r} \, \ovl{s}$. 

Let
\[E_\infty \colonequals \sum_{p\in \Ind s} E^{(p)},\quad F_{d-1}\colonequals \sum_{p\in \Ind r} E^{(p)}_{d-1}.\]

\begin{prop}\label{prop_descend}
The group $N_0$ is a rank $4$ free abelian group generated by the classes of
\[\hat{C}_0, \hat{D}_0, E_\infty, F_{d-1}.\]
In this basis,
   \begin{equation}\label{eq_r_mat}
        \ovl{r} = \begin{bmatrix}
        1 & 0 & 0 & 0\\
        d(d-1) & 1 & 0 & 2d\\
        0 & 0 & 1 & 0 \\
        -(d-1) & 0 & 0 & -1
    \end{bmatrix}, \quad
        \ovl{s} = \begin{bmatrix}
            d-1 & 2 & d-1 & 0\\
            0 & d-1 & 0 & 0\\
            0 & -1 & -1 & 0\\
            0 & 0 & 0 & d-1
        \end{bmatrix},
  \end{equation}
  \begin{equation}
  \label{eq_b_mat} 
      \ovl{b} = \begin{bmatrix}
          d-1 & 2 & 2d & 0\\
          d^3-2d^2 +d & 2d^2-d-1  & 2d^3 & 2d^2-2d\\
          0 & -1 & -1 & 0\\
          -(d-1)^2 & -2d+2 & -(2d-2)d & -(d-1)
      \end{bmatrix}.
  \end{equation}
\end{prop}

To prove this, we show that, modulo $N_1$,
    \begin{align}
        \hat{r}_* \hat{C}_0 &\equiv \hat{C}_0 + d(d-1) \hat{D}_0 - (d-1)F_{d-1},\label{eq_descend_1}\\
        \hat{r}_* \hat{D}_0 &\equiv \hat{D}_0 ,\label{eq_descend_2}\\
        \hat{r}_* E_\infty &\equiv E_\infty,\label{eq_descend_3}\\
        \hat{r}_* F_{d-1} & \equiv 2d \hat{D}_0 - F_{d-1}, \label{eq_descend_4}\\
        \hat{s}_* \hat{C}_0 &\equiv (d-1) \hat{C}_0 ,\label{eq_descend_s1}\\
        \hat{s}_* \hat{D}_0 &\equiv 2 \hat{C}_0 + (d-1) \hat{D}_0 - E_\infty,\label{eq_descend_s2}\\
        \hat{s}_* E_\infty & \equiv (d-1)\hat{C}_0 - E_\infty,\label{eq_descend_s3}\\
        \hat{s}_* F_{d-1} &\equiv (d-1)F_{d-1}.\label{eq_descend_s4}
    \end{align}
    We prove \eqref{eq_descend_1}-\eqref{eq_descend_s4} in order.

    \begin{proof}[Proof of \eqref{eq_descend_1}]
    As in the case of a general algebraic billiard, $r_* C_0$ has bidegree $(1, d(d-1))$ in $\Num C \times D$ \cite[Equation 23]{billiardsI}. Therefore 
    $$\hat{r}_* \hat{C}_0 = \hat{C}_0 + d(d-1) \hat{D}_0 - \sum_{E \in \mc{E}} m_E E,$$
    where $m_E$ is the order of vanishing of $\pi^* r_* C_0$ along $E$.
Since $r_* C_0 \cap \Ind s = \varnothing$, we have $m_E = 0$ for all $E$ above $\Ind s$. Working modulo $N_1$, we just need the vanishing order of $r_* C_0$ on $E_{d-1}^{(p)}$, for each $p\in \Ind r$. By Proposition \ref{prop_series}, if $p\in \Ind r$, then locally around $p$ in $(y,z)$ coordinates the equation of $r_* C_0$ is of the form, for some $\kappa_0\neq 0$, 
    \[y = \kappa_0z^{d-1} + O(z^d).\]
    This vanishes to order $d-1$ on $E_{d-1}^{(p)}$, since setting $y= uv^{d-1}$, $z= v$, the equation is sharply divided by $v^{d-1}$. 
    \end{proof}

    \begin{proof}[Proof of \eqref{eq_descend_2}]
    Indeed $\hat{r}_* \hat{D}_0 = \hat{D}_0$ as divisors, since $r_* D_0 = D_0$ and $D_0 \cap \Ind \pi=\varnothing$. 
    \end{proof}

    \begin{proof}[Proof of \eqref{eq_descend_3}]
    Indeed $\hat{r}_* E_\infty = E_\infty$ as divisors (Lemma \ref{lem_r_at_infty}).
    \end{proof}

    \begin{proof}[Proof of \eqref{eq_descend_4}]
    In fact, we claim that for each $p\in \Ind r$, we have 
    \[\hat{r}_*(E_{d-1}^{(p)}) \equiv D_0 - E_{d-1}^{(p)}\mod N_1.\]
    By Lemma \ref{lem_r_A}, we have
    \[ (r \circ \pi )_* E_{d-1}^{(p)} = D^{(p)},\]
    which is of bidegree $(0,1)$. So
    $$\hat{r}_* E_{d-1}^{(p)} = \hat{D}_0 - \sum_{E \in \mc{E}} m_E E,$$
    where $m_E$ is the order of vanishing of $\pi^* D^{(p)}$ along $E$. Since $D^{(p)} \cap (\Ind s \cup \Ind r) = \{ p \}$, working modulo $N_1$, we just need to compute $m_{E_{d-1}^{(p)}}$. Writing $D^{(p)}$ locally at $p$ as $\{z=0\}$, $\pi^* D^{(p)}$ has local equation $\{v_{d-1} =0\}$ in $V^\circ_{\pi_{d-1}}$, same as $E^{(p)}_{d-1}$.
    \end{proof}

    \begin{proof}[Proof of \eqref{eq_descend_s1}]
    Since $q_0$ is general, $s|_{C_0 \vdash C_0}$ is $(d-1)$-to-$(d-1)$, so $s_* C_0 = (d-1) C_0$.
    \end{proof}

    \begin{proof}[Proof of \eqref{eq_descend_s2}]
    As in the case of a general algebraic billiard,
    $s_* D_0$ has bidegree $(d-1, 2)$ in $\Num C \times D$ \cite[Equation 22]{billiardsI}. Therefore 
    $$\hat{s}_* \hat{D}_0 = (d-1)\hat{C}_0 + 2 \hat{D}_0 - \sum_{E \in \mc{E}} m_E E,$$
    where $m_E$ is the order of vanishing of $\pi^* s_* D_0$ along $E$. 

    Similarly, for each $p \in \Ind s$, we have $\hat{s}_* E^{(p)} = C_0 - E^{(p)}$ in $\Num P$, since the divisor $(s \circ \pi)_* E^{(p)}$ is a smooth curve of bidegree $(1,0)$ such that $(s \circ \pi)_* E^{(p)} \cap (\Ind s \cup \Ind r) = \{p\} $ (Lemma \ref{lem_s_infty}).
    
    Notice $D' \cap (\Ind s \cup \Ind r) = \Ind s$. So in $\Num P$,
    \begin{align*}
        m_{E^{(p)}} = \hat{s}_* D_0 \cdot E^{(p)} & = D_0
        \cdot \hat{s}_* E^{(p)} & \text{(by self-adjointness)}\\
        & = D_0 \cdot(C_0-E^{(p)}) \\
        & = 1.&
    \end{align*}
    \end{proof}
    
    \begin{proof}[Proof of \eqref{eq_descend_s3}]
    Again $\hat{s}_* E^{(p)} = C_0 - E^{(p)}$ in $\Num P$ by Lemma \ref{lem_s_infty} and linear equivalence. 
    \end{proof}

    \begin{proof}[Proof of \eqref{eq_descend_s4}]
    We have $\hat{s}_{\str} F_{d-1} = (d-1)F_{d-1}$ by Lemma \ref{lem_s_lower}. And $\hat{s}_* F_{d-1} - \hat{s}_{\str} F_{d-1}$ is supported on $\hat{s}_*$-exceptional image curves, which are in $W_1$ by Lemma \ref{lem_W1} (\ref{it_W1_exc}). 
\end{proof}

We arrive at the proof of our main result on the Fermat hyperbola.

\begin{thm} \label{thm_dd_body}
    The dynamical degree of the billiards correspondence $b: C\times D \vdash C\times D $ in the Fermat hyperbola is
    \begin{equation} \label{eq_dd_body}
        \lambda_1(b) = \frac{2d^2 - 3d + \sqrt{(2d^2 - 3d)^2 - 4(d-1)}}{2}.
    \end{equation}
\end{thm}
\begin{proof}
    By Proposition \ref{prop_essentially}, 
    $$\lambda_1(b) = \lambda_1(\hat{b}) = \rad\hat{b}_*.$$
    We claim $\rad\hat{b}_* = \rad\ovl{b}$.
Since $\hat{b}_*|_{N_0}$ is semiconjugate to $\ovl{b}$, see Proposition \ref{prop_descend}, we have 
    \[\rad \hat{b}_* \geq \rad \hat{b}|_{N_0} \geq \rad \ovl{b}.\]
    To prove the reverse inequality, we use an ad hoc argument. For all $n$, $(\hat{b}_*)^n \Delta$ is in \rev{$N$.} Writing the image of $(\hat{b}_*)^n$ in \rev{$N_0$} in our preferred basis, and using that $\Delta$ is orthogonal to $N_1$, we have
    \begin{align*}
        (\hat{b}_*)^n \Delta\cdot \Delta &= \tiny \begin{bmatrix}
            1 & 1 & 0 & 0
        \end{bmatrix} \ovl{b}^n \tiny \begin{bmatrix}
            1\\1\\0\\0
        \end{bmatrix}.
    \end{align*}
    \rev{So for each $\varepsilon> 0$, there is a constant $c_\varepsilon > 0$ such that for all $n \geq 0$, we have
    $$(\hat{b}_*)^n \Delta\cdot \Delta \leq c_\varepsilon (\rad\ovl{b})^{n+\varepsilon}).$$
    }
    In the limit as $n \to \infty$,
    \begin{align*}
        \rad \hat{b}_* &= \lim_{n\to \infty} \left((\hat{b}_*)^n \Delta\cdot \Delta\right)^{1/n} &\text{(Proof of Lemma \ref{lem_AAS})}\\
        & \leq \lim_{n \to \infty} \left( \rev{ c_\varepsilon} (\rad\ovl{b} )^{n+\varepsilon} \right)^{1/n} \\
        &  \leq \rad \ovl{b} + \varepsilon.
    \end{align*}
    Since this holds for all $\epsilon > 0$, we have $\rad\hat{b}_* \leq \rad\ovl{b}$.

    Finally, we compute $\rad \ovl{b}$. By \eqref{eq_b_mat}, the characteristic polynomial of $\ovl{b}$ is
\[\det(\lambda I - \ovl{b}) = (\lambda - (d-1))^2(\lambda^2 - (2d^3 - 2d)\lambda + d-1).\]
\rev{Since $d\geq 2$, the quadratic factor is real-rooted, and its maximal root is equal to the right side of \eqref{eq_dd_body}. The latter root is at least $d-1$, so it is the spectral radius.}
\end{proof}

Finally, we specialize the generic curve to obtain our main theorem.

\begin{proof}[Proof of Theorem  \ref{thm_main_dd}]
    Let $C_{\gen}$ be the generic curve of degree $d\geq 2$, and let $D$ be defined by $Q_0^2 + Q_1^2 = Q^2_2$. By Theorem \ref{thm_dd_props} (\ref{it_specialization}), the dynamical degree of $b_{{\gen}}: C_{\gen} \times D \vdash C_{\gen} \times D$, viewed as a correspondence over the geometric function field of the parameter space of degree $d$ curves, is at least $\lambda_1(b)$, which we computed in Theorem \ref{thm_dd_body}. This proves the theorem for the Euclidean form. But the claim is independent of the choice of form, since all nondegenerate quadratic forms are equivalent up to change of coordinates, and coordinate changes preserve the generic curve. 
\end{proof}

Theorem \ref{thm_main_dd} implies that the lower bound on $\lambda_1$ holds for the billiard correspondence in a very general curve, where \emph{very general} requires that the coefficients appearing in an affine equation of the curve are algebraically independent.

\section{The midpoint divisor, algebraic stability, and periodic orbits} \label{sect_AS}

We have shown that $(P, \hat{b})$ is an essentially stable model for the Fermat hyperbola billiard. By a more careful study of the exceptional divisors, we can describe the indeterminacy of $\hat{b}$ completely. If $d$ is odd, then over $\bk = \C$, we show that $P$ is in fact an algebraically stable model (Theorem \ref{thm_main_model} (\ref{it_thm_as})). A minor modification of $P$ is needed if $d$ is even. For the sake of clarity, we present the odd case first and leave the even case for Section \ref{sect_even}.

\subsection{Odd case}

Recall the structure of the exceptional divisor of $P$,
\begin{align*}
    \mc{E} &= \sum_{p \,\in \,\Ind s \, \cup \, \Ind r} \mc{E}^{(p)} \\
    &= \sum_{p \, \in \, \Ind r}\sum_{k=1}^{d-1} E_k^{(p)} + \sum_{p \, \in \, \Ind s} E^{(p)}.
\end{align*}

If $d$ is odd, the \emph{midpoint divisor} above $p \in \Ind r$ is $E^{(p)}_{(d-1)/2}$. It plays a special role in the billiard dynamics. We set the notation
$$E_{\Mid}^{(p)} \colonequals E^{(p)}_{(d-1)/2}.$$
If $p$ is understood, we just write $E_{\Mid}$.
\end{defn}

If we let $E_0^{(p)}= \{z=0\}$, then $E_{(d-1)/2}^{(p)}$ is the midpoint of the dual graph of $\{E_0^{(p)}, \ldots, E_{d-1}^{(p)}\}.$

We identify $E_{\Mid}$ with $\PP^1$ by the map $u = u_{(d-1)/2}$, where $u_{(d-1)/2}: E_{\Mid}^\circ \to \A^1$, see Definition \ref{def_P}. The point $a = E_{\Mid}\cap E_{(d-1)/2-1}$ is thus identified with $\infty$ in $\PP^1$. 

\begin{lemma}\label{lem_micro_s}
The following dominant restrictions exist:
\[\rho\colonequals \hat{r}|_{E_{\Mid}\vdash E_{\Mid}}, \quad \sigma \colonequals \hat{s}|_{E_{\Mid}\vdash E_{\Mid}}.\]
The map $\rho$ is the involution
\begin{align}\label{eq_micro_r}
    \rho(u) &= \frac{1}{u}.
\end{align} 
    The correspondence $\sigma$ is $(d-1)$-to-$(d-1)$, acting on $\A^1$ by
    \begin{align}\label{eq_micro_s}
        \sigma(u) = \left\{\frac{u}{\alpha^{(d-1)/2}} : \; \alpha\in \bk, \; \alpha^{d-1}+\ldots + 1 + \rev{du^2} = 0\right\}.
    \end{align}
    The image of $a = \infty$ is $\sigma(\infty) = \{ u \in \bk : u^2 = -1/d\}$, each with multiplicity $(d-1)/2$. 
\end{lemma}

\begin{proof}
\rev{Equation \eqref{eq_micro_r} and \eqref{eq_micro_s} are the special cases of Equation \eqref{eq_micro_r_preview} and Equation \eqref{eq_micro_s_preview} with $k = (d-1)/2$.}

By self-adjointness of $\hat{s}$, $\sigma(\infty) = \sigma^{\adj}(\infty)$. If $u_0\in \bk$, and $\sigma(u_0)$ is finite, then $0$ is a root of $\alpha^{d-1}+\ldots + 1 + du_0^2$ as a polynomial in $\alpha$, so $u_0^2 = -1/d$. So it remains to show that $\infty$ is not in $\sigma(\infty)$. By \eqref{eq_micro_s}, we have $0 = 1 + \ldots + 1/\alpha^{d-1}+ d(u')^2$ for any $u'$ finite in $\sigma(u)$. As $u\to \infty$, we get $\alpha\to \infty$, so $(u')^2 \to -1/d$. The multiplicities follow by Galois symmetry. 
\end{proof}

We may now describe the indeterminacy points of $\hat{s}$.
\begin{lemma}\label{lem_ind_s}
    Say $d$ is odd. The $\hat{s}$-contracted curves and $\hat{s}$-exceptional image curves are the divisors $E^{(p)}_k$, $p\in \Ind r$, $1\leq k < (d-1)/2$. For each $p\in \Ind r$, 
    \[\Ind \hat{s} \cap \mc{E}^{(p)} \subset E^{(p)}_{\Mid},\]
    and with the identification $u: E^{(p)}_{\Mid}\to \PP^1$, 
    \begin{equation} \label{eq_ind_s}
    \Ind\hat{s} \cap \mc{E}^{(p)} = \sigma(\infty) = \{u \in \bk : u^2 = -1/d \}.
    \end{equation}
    For each $u_0\in \Ind\hat{s}\cap \mc{E}^{(p)}$, we have \[\hat{s}(u_0) = \bigcup_{1\leq k < (d-1)/2} E^{(p)}_k.\]
\end{lemma}

\begin{proof}
    Fix $p \in \Ind r$. We argue by descending induction on $k$, from $k = (d-1)/2 - 1$ to $1$. Since $s(p) = \{p\}$, we have $\hat{s}(E_k^{(p)}) \subset \mc{E}^{(p)}$. Let $k = (d-1)/2 - 1$, $a^{(p)} \colonequals E^{(p)}_{(d-1)/2-1}\cap E^{(p)}_{\Mid}$. So $u(a^{(p)}) = \infty$. We claim we cannot have $E^{(p)}_{\Mid}\subset \hat{s}(E^{(p)}_{(d-1)/2-1})$. By \rev{Equation \eqref{eq_micro_s} and self-adjointness of $\hat{s}$,} a general point of $E_{\Mid}$ has a discrete set of at least $d-1$ preimages in $E_{\Mid}$ with multiplicity, none of them $\infty$. Having a further preimage is not possible by Stein (Lemma \ref{lem_stein}). Thus $\hat{s}(E^{(p)}_{(d-1)/2 -1})$ contains $\sigma(\infty)$ as a discrete set, with total multiplicity $d-1$. It follows, again by Lemma \ref{lem_stein}, that $\hat{s}(E_{(d-1)/2-1})$ contains no other points. So $\hat{s}(E^{(p)}_{(d-1)/2 -1}) = \sigma(\infty)$. Repeating this argument for $k-1$, we find that $\hat{s}(E^{(p)}_{k-1})\subset \hat{s} (E^{(p)}_{k-1} \cap E^{(p)}_k) = \Ind \hat{s}$ with images of total multiplicity $d-1$, etc. 
\end{proof}
\rev{It is then easy to describe the exceptional points and indeterminacy points of $\hat{b}$.}
\begin{prop} \label{prob_b_exc_ind}
    If $d$ is odd,
    \rev{
    \[\Ind \hat{b} \subset \bigcup_{p \in \Ind r} E^{(p)}_{\Mid},\]
    \[\Exc \hat{b} \subset \bigcup_{p \in \Ind r} E^{(p)}_{\Mid}.\]
    }
    Given $p \in \Ind r$, under the identification $u: E^{(p)}_{\Mid}\to \PP^1$, 
    \rev{
    \[\Ind\hat{b} \cap \mc{E}^{(p)} = \{u \in \bk : u^2 = -1/d \},\]
    \[\Exc\hat{b} \cap \mc{E}^{(p)} = \{ u \in \bk : u^2 = -d\}.\]
    }
    Further, the dominant restriction
    \[\beta \colonequals \hat{b}|_{E_{\Mid}\vdash E_{\Mid}}
    \] 
    is $(d-1)$-to-$(d-1)$.
\end{prop}
\rev{
\begin{proof}
Since $\hat{r}$ is biregular (Proposition \ref{prop_r_reg}), we have $\Ind \hat{b} = \Ind \hat{s}$, and the latter set is described by \eqref{eq_ind_s} in Lemma \ref{lem_ind_s}. By biregularity of $\hat{r}$ and self-adjointness of $\hat{s}$, we have 
$$\Exc \hat{b} = \Exc (\hat{r} \circ \hat{s}) = \hat{r}(\Exc \hat{s}) = \hat{r}(\Ind \hat{s}),$$
and $\hat{r}$ acts by $u \mapsto 1/u$.
\end{proof}
}

The midpoint divisor contains all the important points for the analysis of algebraic stability. Further, it is forward-invariant and backward-invariant for $\hat{b}_{\str}$. This reduces the problem to the dynamics of $\beta$. While a $1$-dimensional correspondence may have very complicated dynamics, this one turns out to be easy to understand over $\C$ due to the existence of an invariant subset. The idea is communicated in Figure \ref{fig_tower_odd}.

\begin{lemma} \label{lem_invt_o}
Let $\bk = \C$, assume $d$ is odd, and say $p \in \Ind r$. Let 
$$U = \C \smallsetminus \D = \{u: 1 \leq \abs{u} < \infty \} \subset E_{\Mid}^{(p)}.$$ Then $\hat{b}(U) \subset U$.
\end{lemma}

\begin{proof}
Since $\Ind \hat{b} \cap U = \varnothing$, if $u \in U$, then $\hat{b}(u)$ is a set of points; then since $\beta$ is $(d-1)$-to-$(d-1)$, $\hat{b}(u) = \beta(u)$. So it suffices to show $\beta(U) \subset U$.

Indeed, suppose by way of contradiction that $u \in U$, $u'\in \beta(u)$, and $\abs{u'} < 1$. By counting degrees, we have $\beta = \rho \circ \sigma$. By \eqref{eq_micro_r} and \eqref{eq_micro_s}, we have 
\[\beta(u) = \left\{\frac{\alpha^{(d-1)/2}}{u} : \; \alpha\in \C, \; \alpha^{d-1} + \ldots + 1 + \rev{du^2} =0\right\}.\]
Rearranging, there exists $\alpha \in \C$ satisfying
$$\abs{u'} = \abs{\frac{\alpha^{(d-1)/2}}{\rev{u}} } < 1,$$
$$ \frac{\alpha^{d-1}}{u^2} + \ldots + \frac{1}{u^2} = -d.$$
Each term in the sum is a product of powers of $1/u$ and $\alpha^{(d-1)/2}/{u}$. Since $\abs{1/u} \leq 1$ and $\abs{\alpha^{(d-1)/2}/{u}}<1$, the triangle inequality gives $d < \abs{-d}$, a contradiction.
\end{proof}

\begin{figure}[b]
    \centering \includegraphics[width=0.375\linewidth]{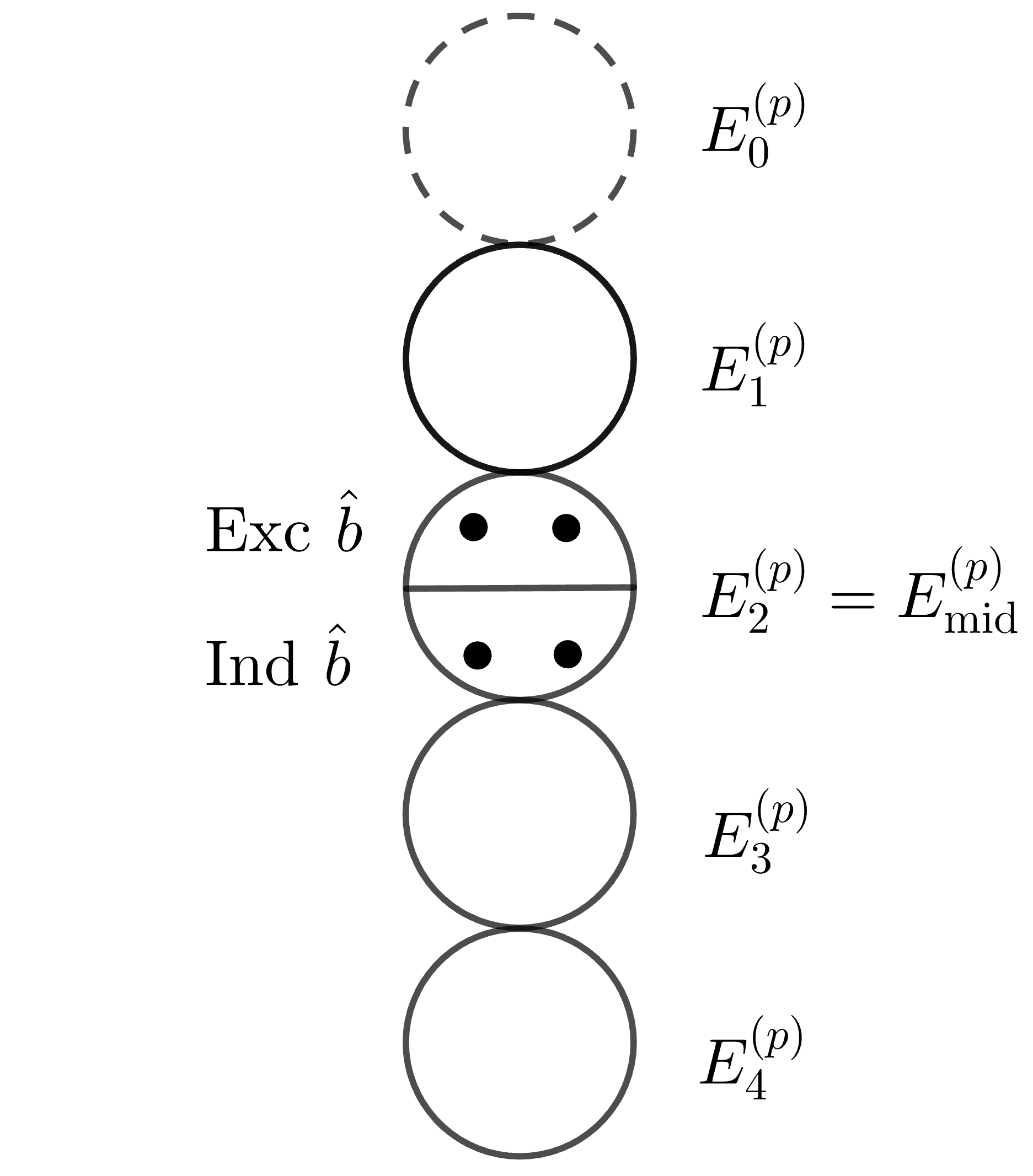}
    \caption{Dynamics on $\mc{E}^{(p)}$, $p \in \Ind r$, when $d = 5$. Read the figure as a chain of five Riemann spheres viewed in profile. The top sphere is $\hat{D}^{(p)} = E_0^{(p)}$. The bottom four spheres form $\mc{E}^{(p)}$. The reflection map $\hat{r}$ rotates the diagram, exchanging the top and bottom sphere. The horizontal line depicts the unit circle on $E^{(p)}_{\Mid}$. The two dots above the unit circle are $\Exc \hat{b} \cap \mc{E}^{(p)}$; the only contracted curve is $E_1^{(p)}$. The two dots below the unit circle are $\Ind \hat{b} \cap \mc{E}^{(p)}$. The map $\hat{b}$ sends the upper half of $E^{(p)}_{\Mid}$ into itself, so the forward orbit of $\Exc \hat{b}$ never meets $\Ind \hat{b}$.}
    \label{fig_tower_odd}
\end{figure}

Now we prove the odd case of Theorem \ref{thm_main_model} (\ref{it_thm_as}).

\begin{thm} \label{thm_AS_odd}
    Let $\bk = \C$. If the degree $d$ is odd, then $\hat{b}: P\vdash P$ is algebraically stable.
\end{thm}

\begin{proof}
By Theorem \ref{thm_dd_props} (\ref{it_as}), the correspondence $\hat{b}:P\vdash P$ is algebraically stable if for all $n\geq 0$,
\[\hat{b}^n(\Exc \hat{b}) \cap \Ind \hat{b} = \varnothing.\]
Let $e_0 \in \Exc \hat{b}$. By Proposition \ref{prob_b_exc_ind}, $e_0 \in E_{\Mid}^{(p)} \cong \hat{\C}$ for some $p \in \Ind r$. Let $U = \C \smallsetminus \D \subset E_{\Mid}^{(p)}$. By Proposition \ref{prob_b_exc_ind}, $e_0 \in U$ and $\Ind \hat{b} \cap U = \varnothing$. Since $\hat{b}(U) \subset U$ by Lemma \ref{lem_invt_o}, we have $\hat{b}^n(\Exc \hat{b}) \subset U$ for all $n \geq 0$, so $\hat{b}^n(\Exc \hat{b}) \cap \Ind \hat{b} = \varnothing$.
\end{proof}

\subsection{Even degree case of algebraic stability} \label{sect_even}

To study the case of even degree $d$, we will require a model that has an analogue of the midpoint divisor. We therefore introduce the following variant of $P$.

\begin{defn}[Model $P_+$] \label{def_pplus}
    Suppose $d$ is even. Let $p\in \Ind r$. Let $a^{(p)} = E_{d/2 -1}^{(p)} \cap E_{d/2}^{(p)}$. Let $\tilde{\pi}^{(p)}_+:P_+^{(p)} \to P$ be the simple point blowup of $P$ at $a^{(p)}$. Let $\tilde{\pi}_+: P_+\to P$ be the join of all the $\tilde{\pi}_+^{(p)}$, and let $\pi_+ = \pi\circ \tilde{\pi}_+$. Let $\mc{E}_+$ be the set of irreducible components of $\pi_+^{-1}(\Ind r \cup \Ind s)$, and let $\mc{E}^{(p)}_+$ be the subset of $\mc{E}_+$ over a given point $p$. See Figure \ref{fig_P_plus}.

    Given $p \in \Ind r$, the \emph{midpoint divisor} above $p$ is
    $$E_{\Mid}^{(p)} \colonequals \tilde{\pi}_+^{-1}(a^{(p)}).$$
    If $p$ is understood, we just write $E_{\Mid}$.

    Let 
    $$\hat{b}_+, \hat{r}_+, \hat{s}_+ \colon P_+ \vdash P_+,$$
    $$\hat{b}_+ \colonequals \pi_+^{-1} \circ b \circ \pi_+, \quad \hat{r} \colonequals \pi_+^{-1} \circ r \circ \pi_+, \quad \hat{s} \colonequals \pi_+^{-1} \circ s \circ \pi_+.$$

Given $p \in \Ind r$, we let
\[\beta_+ \colonequals \hat{b}_+|_{E_{\Mid}\vdash E_{\Mid}}, \quad \rho_+ \colonequals \hat{r}_+|_{E_{\Mid}\vdash E_{\Mid}}, \quad \sigma_+ \colonequals \hat{s}_+|_{E_{\Mid}\vdash E_{\Mid}}.\]    
\end{defn}
    
Note that $P_+$ requires a satellite blowup, unlike $P$. From the Favre-Jonsson point of view \cite[Chapter 6]{MR2097722}, the divisor $E_{\Mid}^{(p)}$ is the vertex of the relative universal dual graph that parametrizes Puiseux series of the form $y(z) = uz^{(d-1)/2}$.

\begin{figure}[h]
    \centering
    \includegraphics[scale=0.7]{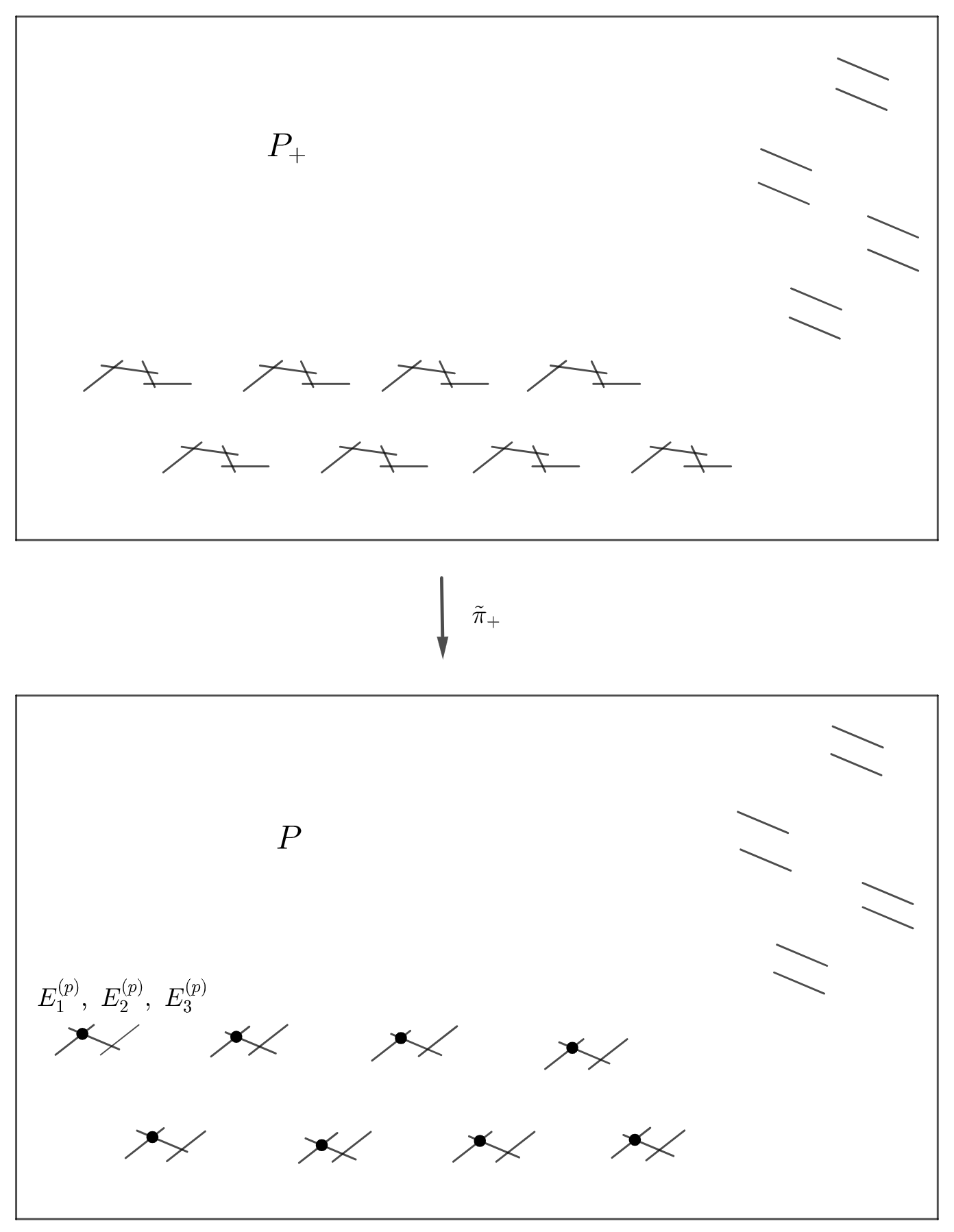}
    \caption{Model $P_+$, $d = 4$. The dots in $P$ represent blowup centers.}
    \label{fig_P_plus}
\end{figure}

\begin{figure}[b]
    \centering \includegraphics[width=0.24\linewidth]{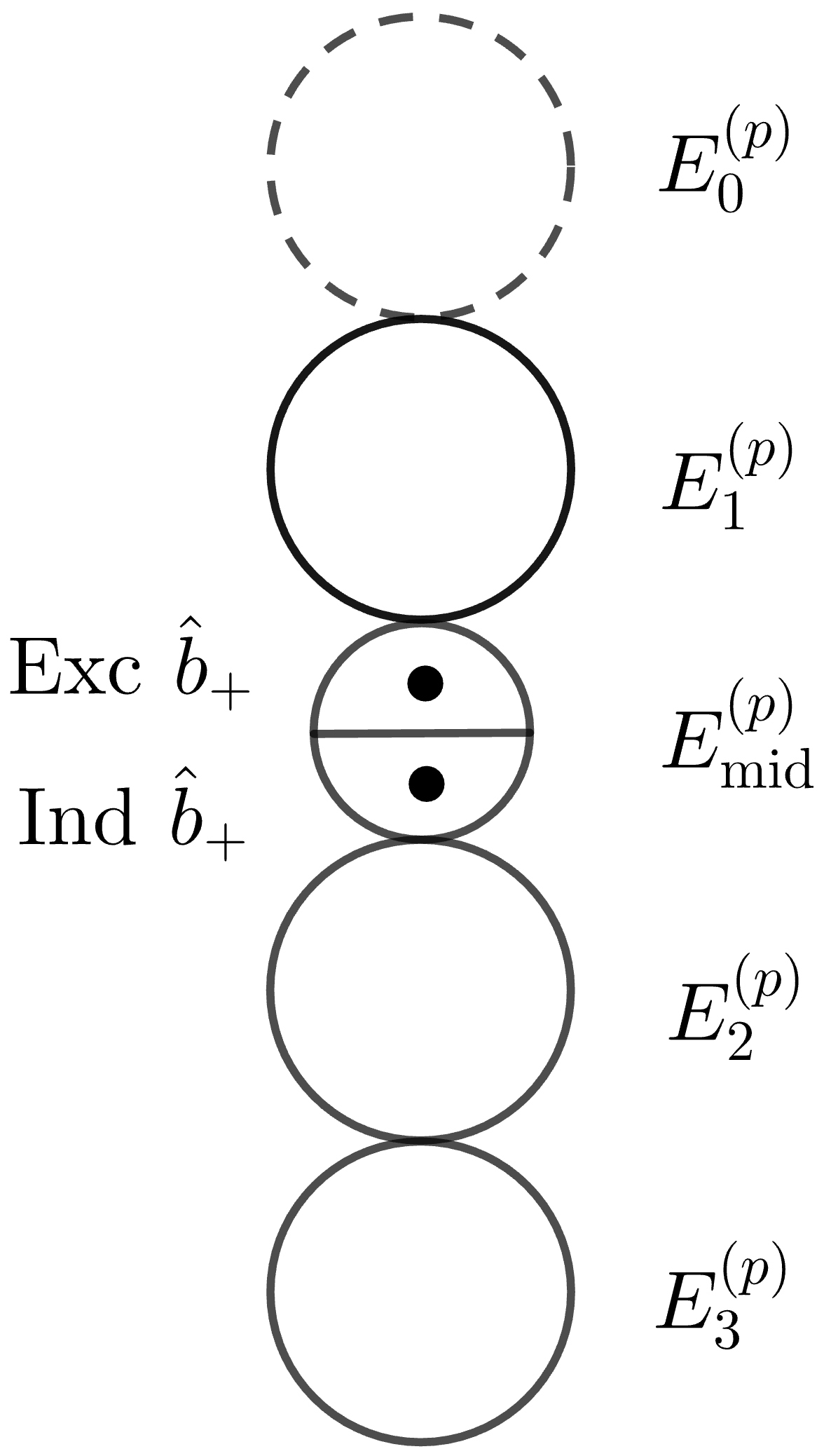}
    \caption{Dynamics on $\mc{E}_+^{(p)}$, $p \in \Ind r$, when $d = 4$. Read the figure as in Figure \ref{fig_tower_odd}. The reflection map $\hat{r}_+$ again rotates the diagram. The horizontal line depicts the unit circle on $E^{(p)}_{\Mid}$. The dot above the unit circle is the sole  point of $\Ind \hat{b}_+ \cap \mc{E}^{(p)}$. The dot below the unit circle is the sole point in $\Exc \hat{b}_+ \cap \mc{E}^{(p)}$. The only contracted curve is $E_1^{(p)}$. The dynamics are analogous.}
    \label{fig_tower_even}
\end{figure}

In order to avoid using Puiseux series, when $d$ is even, we study $P_+$ using a local double cover. Let $p \in \Ind r$. Let $G$ be the formal neighborhood of $C \times D$ about $p$ with the standard coordinates $y,z$, defined in \eqref{eq_yz}. We construct a double cover of $G$ as follows. 

Let $\tilde{G} = \tilde{G}^{(p)} = (\A^2,0)$ with coordinates $(\tilde{y},\tilde{z})$, equipped with a $2$-to-$1$ map 
$$\delta: \tilde{G} \to G,$$
$$(\tilde{y},\tilde{z}) \mapsto (\tilde{y}, \tilde{z}^2).$$

Let $\tau: \tilde{P}^{(p)} \to \tilde{G}$ denote the standard $(2d-2)$-fold iterated blowup defined using the process in Definition \ref{def_P}, with respect to $\tilde{y}, \tilde{z}$.
That is, we add divisors $\tilde{E}_k$ to $\tilde{G}$ for all $1 \leq k \leq 2d-2$ parametrizing formal curves $\tilde{y} = \tilde{u}_k \tilde{z}^k$.
Here $\tilde{u}_k$ is the preferred affine coordinate on $\tilde{E}_k$. There is an induced generically $2$-to-$1$ regular map
$$\delta_1 : \tilde{P}^{(p)} \to P^{(p)}$$
sending $\tilde{E}_k$ to $E_{k/2}$ for all even $k$, and contracting $\tilde{E}_k$ to $E_{(k-1)/2} \cap E_{(k+1)/2}$ for all odd $k$. Further, there is a map
$$\delta_2 : \tilde{P}^{(p)} \to P_+^{(p)}$$
that agrees with $\delta_1$ except that $\delta_2$ maps $\tilde{E}_{d-1}$ onto $E_{\Mid}$. There is an affine coordinate $u$ on $E_{\Mid}$ such that $\delta_2(\tilde{u}_{d-1}) = \tilde{u}^2_{d-1}.$

Let
$$s_2 = \delta^{\adj} \circ s \circ \delta,$$
$$r_2 = \delta^{\adj} \circ r \circ \delta,$$
$$\tilde{s} = \tau^{-1} \circ s_2 \circ \tau,$$
$$\tilde{r} = \tau^{-1} \circ r_2 \circ \tau.$$
Then $\delta_2$ semiconjugates $\tilde{s}$ to $\hat{s}_+$, and $\tilde{r}$ to $\hat{r}_+$. 

With this model, the proof of algebraic stability is completely analogous to the odd case. We give a (somewhat abbreviated) sketch showing how the statements translate.

\begin{lemma} \label{lem_r_mid}
    On the midpoint divisor, we have
    \begin{equation} \label{eq_rplus}
        (\hat{r}_+)_{\str} E_{\Mid} = E_{\Mid}
    \end{equation}
    and
\begin{equation} \label{eq_micro_rplus}
    \rho_+(u) = \frac{1}{u}.
\end{equation}
\end{lemma}

\begin{proof}
    Local equations for $r_2$ are obtained by substituting $y = \tilde{y}, z=\tilde{z}^2$ in Proposition \ref{prop_series} (\ref{item_prop_series_r}). We get
    $$\tilde{z}^2 = \tilde{z}'^2,$$
    $$\tilde{z}^{2d-2} (1 + O(\tilde{z}^{2d})) = \tilde{y} \tilde{y}'.$$
    Changing coordinates as in Lemma \ref{lem_r_A}, we get a multiplicity-$2$ equation for $\tilde{r}|_{\tilde{E}_{d-1} \vdash \tilde{E}_{d-1}}$,
    $$ \tilde{u}_{d-1} \tilde{u}_{d-1}' = 1.$$
    Semiconjugating by $\delta_2$, the equation for $\hat{r}_+|_{E_{\Mid} \vdash E_{\Mid}}$ is $u u' = 1$, proving \eqref{eq_rplus} and the formula for $\rho_+$.
\end{proof}

\begin{prop}
    The map $\hat{r}_+: P_+ \dashrightarrow P_+$ is biregular. 
\end{prop}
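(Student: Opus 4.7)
The plan is to adapt the argument of Proposition \ref{prop_r_reg} verbatim to $P_+$. Since $r:C\times D \dashrightarrow C\times D$ is a birational involution and $\pi_+$ is birational, the conjugate $\hat{r}_+ = \pi_+^{-1}\circ r \circ \pi_+$ is again a birational involution. In particular, $\Ind \hat{r}_+ = \Exc \hat{r}_+$, so by Lemma \ref{lem_c_test} it suffices to check that every prime divisor $\Delta$ on $P_+$ satisfies $\dim (\hat{r}_+)_{\str}(\Delta) = 1$; then $\Exc \hat{r}_+ = \varnothing$ and $\hat{r}_+$ is biregular.

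Therefore I would enumerate the prime divisors of $P_+$ and exhibit a $1$-dimensional strict image in each case. The prime divisors break into four groups: (i) strict transforms of primes of $C\times D$ other than the $D^{(\pm i)}_j$, which behave well because $\pi_+$ is an isomorphism away from $\Ind r \cup \Ind s$; (ii) the exceptional primes $E^{(p)}$ above $p\in \Ind s$, which are handled by (the $P_+$-analogue of) Lemma \ref{lem_r_at_infty}, depending only on local computations at scratch points at infinity that are insensitive to the extra blowup $\tilde{\pi}_+$; (iii) the strict transforms $\hat{D}^{(\pm i)}_j$ and the divisors $E^{(p)}_{d-1}$, which are exchanged by the $P_+$-analogue of Lemma \ref{lem_r_A}, again because $\tilde{\pi}_+$ does not touch the relevant affine chart; (iv) the divisors $E^{(p)}_k$ for $1\leq k\leq d-2$, which are paired $E^{(p)}_k \leftrightarrow E^{(p)}_{d-1-k}$ by the first claim of Lemma \ref{lem_r_B} (note that since $d$ is even, the center index $k = d/2 - 1$ maps to $k = d/2$, neither of which is the midpoint).

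That leaves only the midpoint divisors $E^{(p)}_{\Mid}$, which are new to $P_+$. These are handled by the preceding lemma: equation \eqref{eq_rplus} shows $(\hat{r}_+)_{\str} E_{\Mid}^{(p)} = E_{\Mid}^{(p)}$, and the explicit formula \eqref{eq_micro_rplus} confirms that the restriction is a nonconstant (indeed involutive) map on $E_{\Mid}^{(p)}$. Combining these four cases, every prime on $P_+$ has $1$-dimensional strict image under $\hat{r}_+$, and the proof concludes as in Proposition \ref{prop_r_reg}.

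The only place that requires any real work beyond the $P$ case is the midpoint divisor, since $E_{\Mid}^{(p)}$ arises from a satellite blowup and so is not addressed by the original Definition \ref{def_P} computations. The double cover trick of Section \ref{sect_double} resolves this cleanly: it reduces the analysis of $\hat{r}_+$ near $E_{\Mid}^{(p)}$ to the analysis of $\tilde{r}$ on $\tilde{E}_{d-1}$ in the iterated-blowup cover $\tilde{P}^{(p)}$, which falls under the uniform formal-coordinate framework of Lemmas \ref{lem_r_A} and \ref{lem_r_B}. I expect this is the only step that might trip one up on a careless reading; once it is established, the remainder is bookkeeping identical to the odd-$d$ case.
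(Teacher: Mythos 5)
Your proposal is correct and follows the same route as the paper: rely on the $P_+$-analogues of Lemmas \ref{lem_r_at_infty}, \ref{lem_r_A}, and \ref{lem_r_B} (which carry over because the satellite blowup $\tilde\pi_+$ doesn't affect those local computations), together with the preceding lemma for the midpoint divisor, and conclude via the contracted-curve test as in Proposition \ref{prop_r_reg}. The paper itself states this in a single sentence ("Gathering these results gives") so your version is just a more explicit unpacking of the same argument.
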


\begin{proof}
    The results of Lemmas \ref{lem_r_at_infty} and \ref{lem_r_A} except the claim about $\rho$ hold verbatim for $\hat{r}_+$ because all the computations are in local coordinates and can be done generically on the divisors. So Lemma \ref{lem_r_mid} completes the argument.
\end{proof}

Lemmas \ref{lem_s_infty} and \ref{lem_s_lower} hold verbatim for $\hat{s}_+$, again because these computations are local and computable generically on the divisors. Let us prove the analogue of Lemma \ref{lem_micro_s}.

\begin{lemma}\label{lem_micro_splus}
Let $\sigma_+ \colonequals \hat{s}_+|_{E_{\Mid} \vdash E_{\Mid}}.$
    The correspondence $\sigma_+$ is $(d-1)$-to-$(d-1)$,
    given by the formula
    \begin{equation}     \label{eq_micro_splus} 
    \sigma_+ (u) = \left\{ \frac{u}{\alpha^{d-1}} : \alpha \in \bk, \;
    \alpha^{d-1} + \ldots + 1 + d u = 0 \right\},
    \end{equation}
    and $\sigma_+(\infty) = \{-1/d\}$.
\end{lemma}

\begin{proof}
    Local equations for $s_2$ are obtained by substitution into Proposition \ref{prop_series} (\ref{item_prop_series_s}):
    $$\tilde{y} = \tilde{y}',$$
    $$-d\tilde{y}^2 = \tilde{z}^{2d-2} + \tilde{z}^{2d-4}\tilde{z}'^{2} + \ldots + \tilde{z}'^{2d-2} + A(\tilde{z}^{2d}, \tilde{z}'^{2d}).$$
    Calculating the strict transform as in the proof of \rev{Lemma \ref{lem_s_lower}} gives local equations for $\tilde{s}|_{\tilde{E}_{d-1} \vdash \tilde{E}_{d-1}}$:
    $$\tilde{u} = \tilde{u}' \tilde{\alpha}^{d-1},$$
    $$ -d \tilde{u}^2 = \tilde{\alpha}^{2d-2} + \tilde{\alpha}^{2d-4} + \ldots + 1,$$
    so again following our previous argument, we get $\tilde{s}|_{\tilde{E}_{d-1} \vdash \tilde{E}_{d-1}} (\infty) = \{\pm 1/\sqrt{-d} \}$.
    Then, by semiconjugacy, we obtain that $\sigma_+$ is $(d-1)$-to-$(d-1)$ and $\sigma_+(\infty) = \{-1/d\}$. 
    Finally, \eqref{eq_micro_splus} follows by semiconjugacy from the equations for $\tilde{s}|_{\tilde{E}_{d-1} \vdash \tilde{E}_{d-1}}$.
\end{proof}

\begin{lemma} \label{lem_ind_splus}
    The $\hat{s}_+$-contracted curves and $\hat{s}_+$-exceptional image curves are the divisors $E^{(p)}_k$, $1 \leq k < (d-1)/2$. For each $p \in \Ind r$,
    $$\Ind \hat{s}_+ \cap \mc{E}_+^{(p)} \subset E^{(p)}_{\Mid},$$
    $$ \Ind \hat{s}_+ \cap \mc{E}_+^{(p)} = \sigma_+(\infty) = \{-1/d\}.$$
    For each $u_0 \in \Ind \hat{s}_+ \cap \mc{E}_+^{(p)}$,
    we have
    $$ \hat{s}_+(u_0) = \bigcup_{1 \leq k \leq (d-1)/2} E^{(p)}_k.$$
\end{lemma}

\begin{proof}
    Follows from Lemma \ref{lem_micro_splus} using the proof of Lemma \ref{lem_ind_s}.
\end{proof}

\begin{prop} \label{prob_b_exc_ind_even}
    If $d$ is even,
    \rev{
    \[\Ind \hat{b}_+ \subset \bigcup_{p \in \Ind r} E^{(p)}_{\Mid},\]
    \[\Exc \hat{b}_+ \subset \bigcup_{p \in \Ind r} E^{(p)}_{\Mid}.\]
    }
    Given $p \in \Ind r$, under the identification $u: E^{(p)}_{\Mid}\to \PP^1$, 
    \rev{
    \[\Ind\hat{b}_+ \cap \mc{E}^{(p)} = \{-1/d \},\]
    \[\Exc\hat{b}_+ \cap \mc{E}^{(p)} = \{-d\}.\]
    }
    Further, there is a $(d-1)$-to-$(d-1)$ dominant restriction
    \[\beta_+ \colonequals \hat{b}_+|_{E_{\Mid}\vdash E_{\Mid}}.
    \] 
\end{prop}

One can check from Proposition \ref{prob_b_exc_ind_even} that $(P, \hat{b})$, obtained from $(P_+, \hat{b}_+)$ by contracting $E_{\Mid}^{(p)}$, is not algebraically stable in the even case, since $\Exc \hat{b} = \Ind \hat{b}$. This is our reason for working with $P_+$. 

\begin{lemma} \label{lem_invt_e}
Let $\bk = \C$, assume $d$ is even, and say $p \in \Ind r$. Let 
$$U = \C \smallsetminus \D = \{u: 1 \leq \abs{u} < \infty \} \subset E_{\Mid}^{(p)}.$$ Then $\hat{b}_+(U) \subset U$.
\end{lemma}

\begin{proof}
    By Proposition \ref{prob_b_exc_ind_even}, the theorem reduces to showing that $\hat{\beta}_+(U) \subset U$.
 By \eqref{eq_micro_rplus} and \eqref{eq_micro_splus},
we have
$$    \beta_+ (u) = \left\{ \frac{\alpha^{d-1}}{u} : \alpha \in \C, \;
    \alpha^{d-1} + \ldots + 1 + d u = 0 \right\}.$$
If $\abs{u} \geq 1$, $\abs{u'} < 1$, and $u' \in \beta_+(u)$, then there exists $\alpha \in \C$ satisfying
$$\abs{u'} = \abs{\frac{\alpha^{d-1}}{u} } < 1,$$
$$ \frac{\alpha^{d-1}}{u} + \ldots + \frac{1}{u} = -d.$$
The triangle inequality shows this is impossible, so we are done.
\end{proof}

We finally finish the proof of Theorem \ref{thm_main_model}. It has the same structure as the odd case.
\begin{thm} \label{thm_AS_even}
    Let $\bk = \C$ and $d \geq 2$ be even. Then $\hat{b}_+: P_+\vdash P_+$ is algebraically stable.
\end{thm}

\begin{proof}
By Theorem \ref{thm_dd_props} (\ref{it_as}), the correspondence $\hat{b}_+:P\vdash P$ is algebraically stable if for all $n\geq 0$,
\[\hat{b}_+^n(\Exc \hat{b}_+) \cap \Ind \hat{b}_+ = \varnothing.\]
Let $e_0 \in \Exc \hat{b}_+$. By Proposition \ref{prob_b_exc_ind_even}, $e_0 \in E_{\Mid}^{(p)} \cong \hat{\C}$ for some $p \in \Ind r$. Let $U = \C \smallsetminus \D \subset E_{\Mid}^{(p)}$. By Proposition \ref{prob_b_exc_ind_even}, $e_0 \in U$ and $\Ind \hat{b}_+ \cap U = \varnothing$. Since $\hat{b}_+(U) \subset U$ by Lemma \ref{lem_invt_e}, we have $\hat{b}_+^n(\Exc \hat{b}_+) \subset U$ for all $n \geq 0$, so $\hat{b}_+^n(\Exc \hat{b}_+) \cap \Ind \hat{b}_+ = \varnothing$.
\end{proof}

In principle, algebraic stability gives a second way of computing the dynamical degree $\lambda_1(b)$ for any given $d$, subject to computing $\hat{b}_*$ or $(\hat{b}_+)_*$. We prefer the method in Section \ref{sect_dd} because it is algebraic, avoids computing most of the matrix entries, and works for both even and odd $d$.

\subsection{The Ivrii Conjecture for generic algebraic plane regions}

We can now prove the Ivrii Conjecture for the Fermat hyperbola billiard and hence for the generic algebraic curve.

Given $n \in \N$, the algebraic billiard $b_{C,D}$ is \emph{$n$-reflective} if every $(x, v) \in C \times D$ satisfies
$$(x,v) \in b^n(x,v).$$

This is a more restrictive notion than found in the papers of Glutsyuk; in those papers, one uses $b(b(\ldots(b(x,v))\ldots))$, i.e. the $n$-th total composite of $b$ with itself.

\begin{thm}[= Theorem \ref{thm_main_ivrii}]
\leavevmode
\begin{enumerate}
    \item 
    Let $\bk = \C$ and $d \geq 2$. For all $n \in \N$, the Fermat hyperbola billiard $b_{C,D} : C \times D \vdash C \times D$ of degree $d$ is not $n$-reflective.
    \item
    Let $\bk = \C$ and $d \geq 2$. For all $n \in \N$, the generic algebraic billiard $b_{\gen} : C_{\gen} \times D \vdash C_{\gen} \times D$ is not $n$-reflective.
    \item
    Let $T \subset \R^2$ be a real algebraic plane curve defined by the vanishing of a polynomial $F(x,y)$ of degree $d \geq 2$ with algebraically independent coefficients over $\Q$. Let $\Omega$ be a bounded component of $\R^2 \smallsetminus T$. Then the set of periodic points of the classical billiard map inside $\Omega$ has measure $0$. 
\end{enumerate}

\end{thm}

\begin{proof} \leavevmode
    \begin{enumerate}
        \item 
     Assume by way of contradiction that $b_{C,D}$ is $n$-reflective. Then $\Gamma_{b^n}$ contains the diagonal. Say $d$ is odd. The graph $\Gamma_{\hat{b}^n}$ contains the diagonal by taking Zariski closures, so every point in $u_0 \in P$  satisfies $u_0 \in \hat{b}^n(u_0)$. Let $p \in \Ind r$, and let $U$ the invariant subset from Lemma \ref{lem_invt_o}; then the point $u_0 = \infty$ on $E_{\Mid}^{(p)}$ has the property $\hat{b}(u_0) \subset U$, and by Lemma \ref{lem_invt_o}, we have by induction that $\hat{b}^n(u_0) \subset U$. Since $u_0$ is not itself in $U$, we have a contradiction. If $d$ is even, the same argument applies using $(P_+, \hat{b}_+)$ and Lemma \ref{lem_invt_e}.
     \label{it_ivrii_pf_fh}
     
    \item This follows from (\ref{it_ivrii_pf_fh}), since if the generic algebraic billiard were $n$-reflective, then every specialization would be $n$-reflective. 

    \item Fix $n$. The space of algebraic curves of degree $d$ over $\bar{\Q}$ is naturally the scheme $\PP^\nu_{\bar{\Q}}$, where $\nu+1$ is the number of monomial terms appearing in a degree $d$ homogeneous equation in $3$ variables. 
    Let $\mc{B} \subset \PP^\nu_{\bar{\Q}}$ be the parameter space of smooth curves, and let $\mc{C} \to \mc{B}$ be the family of smooth degree $d$ plane curves. 
    The algebraic billiards correspondence extends to a dominant rational correspondence $\mc{C} \times D \vdash \mc{C} \times D$ over $\mc{B}$. Since the condition of being $n$-reflective is algebraic, the subscheme $\mc{B}_n$ of $\mc{B}$ consisting of $n$-reflective parameters is Zariski closed. We showed in the previous item that $\mc{B}_n$ does not contain the generic point of $\mc{B}$, so $\mc{B}_n$ is a proper subvariety; so the coordinates of any parameter in $\mc{B}_n(\C)$ are algebraically dependent over $\bar{\Q}$. It follows that $T_\C$ is not in $\mc{B}_n(\C)$, so the locus
    $$\mc{R}_n \colonequals \{(x,v) \in T_{\C} \times D : (x,v) \in b^n(x,v)\}$$
    is a proper subvariety of $T_\C \times D$. Hence $\mc{R}_n$ has dimension at most $1$ over $\C$.  

    By the assumption on algebraic independence of coefficients, the curve $T$ is smooth, so the classical billiard map $b_\Omega: W \to W$ inside $\Omega$ is defined. Here, $W$ is the set of inward-facing unit vectors based on $\partial \Omega$. Then
    $$\{(x,v) \in W : (x,v) = b_\Omega^n(x,v)\} \subset \mc{R}_n(\R).$$
    It follows that the set of $n$-periodic points of $b_\Omega$ is a union of real curves and points, and thus has measure $0$ in $W$. Taking the union over all of the countably many values of $n$ yields the result.
    \end{enumerate}
\end{proof}

The Ivrii conjecture is known for Whitney $\mc{C}^\infty$-generic tables, but it not obvious how to determine whether any particular table lies in the Whitney-generic set. On the other hand, we can manufacture explicit examples of curves to which Theorem \ref{thm_main_ivrii} (\ref{it_ivrii}) applies using the Lindemann-Weierstrass theorem. For instance, the curve
\begin{align*}
T : \quad 0=&e^{\sqrt{2}}x^{4}+0.3e^{\sqrt{3}}x^{3}y\ +\ e^{\sqrt{5}}x^{2}y^{2}+e^{\sqrt{6}}xy^{3}+0.3e^{\sqrt{7}}y^{4}+e^{\sqrt{10}}x^{3}+e^{\sqrt{11}}x^{2}y\ +\ \\
&e^{\sqrt{13}}xy^{2}+e^{\sqrt{14}}y^{3}+e^{\sqrt{15}}x^{2}+e^{\sqrt{17}}xy+e^{\sqrt{19}}y^{2}+e^{\sqrt{21}}x+0.3e^{\sqrt{22}}y+0.3e^{\sqrt{23}}
\end{align*}
has a bounded, non-convex component and is thus a new example of a curve satisfying the Ivrii Conjecture. The condition on algebraic independence of coefficients can be weakened -- we just need $T$ to map to the scheme-theoretic generic point of an algebraic family of curves containing the Fermat hyperbola. We think that the complex Ivrii conjecture could proved for other specific tables, which would establish the real Ivrii conjecture in more families.

\bibliographystyle{alpha}
\bibliography{bib}

\newcommand{\etalchar}[1]{$^{#1}$}
\begin{thebibliography}{BDMLD{\etalchar{+}}24}

\bibitem[BDJ20]{MR4205407}
Jason~P. Bell, Jeffrey Diller, and Mattias Jonsson.
\newblock A transcendental dynamical degree.
\newblock {\em Acta Math.}, 225(2):193--225, 2020.

\bibitem[BDMLD{\etalchar{+}}24]{MR4710877}
M\'ario Bessa, Gianluigi Del~Magno, Jo\~ao Lopes~Dias, Jos\'e{}~Pedro Gaiv\~ao, and Maria~Joana Torres.
\newblock Billiards in generic convex bodies have positive topological entropy.
\newblock {\em Adv. Math.}, 442:Paper No. 109592, 39, 2024.

\bibitem[Bir25]{birkett2022stabilisation}
Richard A.~P. Birkett.
\newblock On the stabilisation of rational surface maps.
\newblock {\em Ann. de la Facult\'e des Sci. de Toulouse, to appear}, 2025.

\bibitem[Che10]{MR2609011}
Yi-Chiuan Chen.
\newblock On topological entropy of billiard tables with small inner scatterers.
\newblock {\em Adv. Math.}, 224(2):432--460, 2010.

\bibitem[{\v{C}}T22]{cinc2022}
Jernej {\v{C}}in{\v{c}} and Serge Troubetzkoy.
\newblock An upper bound on topological entropy of the {B}unimovich stadium billiard map, 2022.
\newblock \url{arXiv:2203.15344}. \textit{Journal of Statistical Physics}, to appear.

\bibitem[DF01]{MR1867314}
J.~Diller and C.~Favre.
\newblock Dynamics of bimeromorphic maps of surfaces.
\newblock {\em Amer. J. Math.}, 123(6):1135--1169, 2001.

\bibitem[DR21]{MR4266360}
Nguyen-Bac Dang and Rohini Ramadas.
\newblock Dynamical invariants of monomial correspondences.
\newblock {\em Ergodic Theory Dynam. Systems}, 41(7):2000--2015, 2021.

\bibitem[DS08]{MR2391122}
Tien-Cuong Dinh and Nessim Sibony.
\newblock Upper bound for the topological entropy of a meromorphic correspondence.
\newblock {\em Israel J. Math.}, 163:29--44, 2008.

\bibitem[EH16]{MR3617981}
David Eisenbud and Joe Harris.
\newblock {\em 3264 and all that---a second course in algebraic geometry}.
\newblock Cambridge University Press, Cambridge, 2016.

\bibitem[Fav03]{MR2021001}
Charles Favre.
\newblock Les applications monomiales en deux dimensions.
\newblock {\em Michigan Math. J.}, 51(3):467--475, 2003.

\bibitem[FJ04]{MR2097722}
Charles Favre and Mattias Jonsson.
\newblock {\em The valuative tree}, volume 1853 of {\em Lecture Notes in Mathematics}.
\newblock Springer-Verlag, Berlin, 2004.

\bibitem[FJ07]{MR2339287}
Charles Favre and Mattias Jonsson.
\newblock Eigenvaluations.
\newblock {\em Ann. Sci. \'{E}cole Norm. Sup. (4)}, 40(2):309--349, 2007.

\bibitem[FJ11]{MR2753603}
Charles Favre and Mattias Jonsson.
\newblock Dynamical compactifications of {${\bf C}^2$}.
\newblock {\em Ann. of Math. (2)}, 173(1):211--248, 2011.

\bibitem[Ful98]{MR1644323}
William Fulton.
\newblock {\em Intersection theory}, volume~2 of {\em Ergebnisse der Mathematik und ihrer Grenzgebiete. 3. Folge. A Series of Modern Surveys in Mathematics [Results in Mathematics and Related Areas. 3rd Series. A Series of Modern Surveys in Mathematics]}.
\newblock Springer-Verlag, Berlin, second edition, 1998.

\bibitem[GK12]{MR2988811}
Alexey Glutsyuk and Yury Kudryashov.
\newblock No planar billiard possesses an open set of quadrilateral trajectories.
\newblock {\em J. Mod. Dyn.}, 6(3):287--326, 2012.

\bibitem[Glu14a]{MR3224419}
Alexey Glutsyuk.
\newblock On odd-periodic orbits in complex planar billiards.
\newblock {\em J. Dyn. Control Syst.}, 20(3):293--306, 2014.

\bibitem[Glu14b]{MR3236494}
Alexey Glutsyuk.
\newblock On quadrilateral orbits in complex algebraic planar billiards.
\newblock {\em Mosc. Math. J.}, 14(2):239--289, 427, 2014.

\bibitem[Glu21]{MR4210728}
Alexey Glutsyuk.
\newblock On polynomially integrable {B}irkhoff billiards on surfaces of constant curvature.
\newblock {\em J. Eur. Math. Soc. (JEMS)}, 23(3):995--1049, 2021.

\bibitem[Gut12]{MR3388585}
Eugene Gutkin.
\newblock Billiard dynamics: an updated survey with the emphasis on open problems.
\newblock {\em Chaos}, 22(2):026116, 13, 2012.

\bibitem[Har13]{Hartshorne}
Robin Hartshorne.
\newblock {\em Algebraic Geometry}.
\newblock Graduate Texts in Mathematics. Springer New York, 2013.

\bibitem[Ivr80]{MR575202}
V.~Ja. Ivri\u{i}.
\newblock The second term of the spectral asymptotics for a {L}aplace-{B}eltrami operator on manifolds with boundary.
\newblock {\em Funktsional. Anal. i Prilozhen.}, 14(2):25--34, 1980.

\bibitem[Kat87]{MR896765}
A.~Katok.
\newblock The growth rate for the number of singular and periodic orbits for a polygonal billiard.
\newblock {\em Comm. Math. Phys.}, 111(1):151--160, 1987.

\bibitem[KSLP86]{MR872698}
Anatole Katok, Jean-Marie Strelcyn, F.~Ledrappier, and F.~Przytycki.
\newblock {\em Invariant manifolds, entropy and billiards; smooth maps with singularities}, volume 1222 of {\em Lecture Notes in Mathematics}.
\newblock Springer-Verlag, Berlin, 1986.

\bibitem[Laz04]{MR2095471}
Robert Lazarsfeld.
\newblock {\em Positivity in algebraic geometry. {I}}, volume~48 of {\em Ergebnisse der Mathematik und ihrer Grenzgebiete. 3. Folge. A Series of Modern Surveys in Mathematics [Results in Mathematics and Related Areas. 3rd Series. A Series of Modern Surveys in Mathematics]}.
\newblock Springer-Verlag, Berlin, 2004.
\newblock Classical setting: line bundles and linear series.

\bibitem[LMP23]{MR4655924}
Michael Levitin, Dan Mangoubi, and Iosif Polterovich.
\newblock {\em Topics in spectral geometry}, volume 237 of {\em Graduate Studies in Mathematics}.
\newblock American Mathematical Society, Providence, RI, [2023] \copyright 2023.

\bibitem[Mar18]{MR3754521}
Jean-Pierre Marco.
\newblock Entropy of billiard maps and a dynamical version of the {B}irkhoff conjecture.
\newblock {\em J. Geom. Phys.}, 124:413--420, 2018.

\bibitem[MZ21]{MR4213303}
Micha{\l} Misiurewicz and Hong-Kun Zhang.
\newblock Topological entropy of {B}unimovich stadium billiards.
\newblock {\em Pure Appl. Funct. Anal.}, 6(1):221--229, 2021.

\bibitem[PS88]{MR939062}
Vesselin~M. Petkov and Luchezar~N. Stojanov.
\newblock On the number of periodic reflecting rays in generic domains.
\newblock {\em Ergodic Theory Dynam. Systems}, 8(1):81--91, 1988.

\bibitem[Ram20]{MR4108910}
Rohini Ramadas.
\newblock Dynamical degrees of {H}urwitz correspondences.
\newblock {\em Ergodic Theory Dynam. Systems}, 40(7):1968--1990, 2020.

\bibitem[Roe15]{MR3342248}
Roland K.~W. Roeder.
\newblock The action on cohomology by compositions of rational maps.
\newblock {\em Math. Res. Lett.}, 22(2):605--632, 2015.

\bibitem[Ryc89]{MR1001275}
Marek~Ryszard Rychlik.
\newblock Periodic points of the billiard ball map in a convex domain.
\newblock {\em J. Differential Geom.}, 30(1):191--205, 1989.

\bibitem[Tab05]{MR2168892}
Serge Tabachnikov.
\newblock {\em Geometry and billiards}, volume~30 of {\em Student Mathematical Library}.
\newblock American Mathematical Society, Providence, RI; Mathematics Advanced Study Semesters, University Park, PA, 2005.

\bibitem[Tru12]{MR2892921}
Tuyen~Trung Truong.
\newblock Degree complexity of birational maps related to matrix inversion: symmetric case.
\newblock {\em Math. Z.}, 270(3-4):725--738, 2012.

\bibitem[Tru20]{MR4048444}
Tuyen~Trung Truong.
\newblock Relative dynamical degrees of correspondences over a field of arbitrary characteristic.
\newblock {\em J. Reine Angew. Math.}, 758:139--182, 2020.

\bibitem[Vas84]{MR775930}
D.~G. Vasil'ev.
\newblock Two-term asymptotic behavior of the spectrum of a boundary value problem in interior reflection of general form.
\newblock {\em Funktsional. Anal. i Prilozhen.}, 18(4):1--13, 96, 1984.

\bibitem[Vas86]{MR829600}
D.~G. Vasil'ev.
\newblock Two-term asymptotic behavior of the spectrum of a boundary value problem in the case of a piecewise smooth boundary.
\newblock {\em Dokl. Akad. Nauk SSSR}, 286(5):1043--1046, 1986.

\bibitem[Wei25]{billiardsI}
Max Weinreich.
\newblock The {D}ynamical {D}egree of {B}illiards in an {A}lgebraic {C}urve.
\newblock {\em J. Geom. Anal.}, 35(1):Paper No. 22, 2025.

\bibitem[Wey11]{Weyl1911}
H.~Weyl.
\newblock Ueber die asymptotische verteilung der eigenwerte.
\newblock {\em Nachrichten von der Gesellschaft der Wissenschaften zu Göttingen, Mathematisch-Physikalische Klasse}, 1911:110--117, 1911.

\bibitem[Xie15]{MR3332894}
Junyi Xie.
\newblock Periodic points of birational transformations on projective surfaces.
\newblock {\em Duke Math. J.}, 164(5):903--932, 2015.

\bibitem[Xie24]{xie2024recursive}
Junyi Xie.
\newblock Algebraic dynamics and recursive inequalities, 2024.
\newblock \url{ arXiv:2402.12678}.

\end{thebibliography}
\end{document}